\theoremstyle{plain}
\newtheorem{theorem}{Theorem}
\newtheorem{lemma}[theorem]{Lemma}
\newtheorem{proposition}[theorem]{Proposition}
\newtheorem*{claim}{Claim}
\newtheorem*{case1}{Case 1}
\newtheorem*{case2}{Case 2}
\theoremstyle{definition}
\newtheorem{definition}[theorem]{Definition}
\newtheorem{setting}[theorem]{Setting}
\numberwithin{theorem}{section}
\def\forces{\mathrel {||}\joinrel \relbar}
\DeclareMathOperator{\cf}{cf}
\DeclareMathOperator{\cof}{cof}
\DeclareMathOperator{\Coll}{Coll}
\DeclareMathOperator{\dom}{dom}
\DeclareMathOperator{\Fil}{Fil}
\DeclareMathOperator{\lh}{lh}
\DeclareMathOperator{\ssup}{ssup}
\DeclareMathOperator{\Ult}{Ult}
\begin{document}

%\begin{frontmatter}
	\title{Universal graphs at $\aleph_{\omega_1+1}$}
	\author{Jacob Davis}
	\address{Department of Mathematical Sciences, Carnegie Mellon University, 5000 Forbes Avenue, Pittsburgh, PA 15213}
	%\ead{jacobdavis11@gmail.com}
	%\begin{keyword}
	%	\MSC[2010]{03E35, 03E55, 03E75}
	%	universal graph \sep Radin forcing \sep successor of singular
	%\end{keyword}

	\begin{abstract}
		Starting from a supercompact cardinal we build a model in which $2^{\aleph_{\omega_1}}=2^{\aleph_{\omega_1+1}}=\aleph_{\omega_1+3}$ but there is a jointly universal family of size $\aleph_{\omega_1+2}$ of graphs on $\aleph_{\omega_1+1}$. The same technique will work for any uncountable cardinal in place of $\omega_1$.
	\end{abstract}

	\maketitle
%\end{frontmatter}

\section{Introduction}

For a cardinal $\mu$, a {\em universal graph on $\mu$} is a graph on $\mu$ into which every graph on $\mu$ can be embedded as an induced subgraph. A family of graphs on $\mu$ is {\em jointly universal on $\mu$} if every graph on $\mu$ can be embedded into at least one of them. We are interested in obtaining jointly universal families of small cardinality for $\mu$ a successor cardinal of the form $\kappa^+$.

If $2^{\kappa}=\kappa^+$ then by a standard model-theoretic construction there is a saturated and hence universal graph on $\kappa^+$. This holds even if $2^{\kappa^+}$ is large. So we are interested in cases when $2^{\kappa}>\kappa^+$. If $\kappa$ is regular then as shown by D{\v z}amonja and Shelah in \cite{universalModels} it is consistent to have a jointly universal family on $\kappa^+$ of size $\kappa^{++}$ whilst $2^{\kappa^+}$ is arbitrarily large. If $\kappa$ is singular than matters are generally more problematic. D{\v z}amonja and Shelah introduce a new approach in \cite{2author} that begins with $\kappa$ supercompact and performs a preparatory iteration to add functions that after Prikry forcing will become embeddings into a family of  jointly universal graphs, whilst preserving some of the supercompactness of $\kappa$, followed by Prikry forcing to change the cofinality of $\kappa$. This enables them to build a model where $\cf(\kappa)=\omega$, $2^{\kappa^+}>\kappa^{++}$ and there is a jointly universal family on $\kappa^{+}$ of size $\kappa^{++}$. In \cite{5author} Cummings, D{\v z}amonja, Magidor, Morgan and Shelah modify this construction to use Radin forcing and achieve $\cf(\kappa)>\omega$ and $2^{\kappa^+}>\kappa^{++}$ with a jointly universal family on $\kappa^+$ of size $\kappa^{++}$. Then in \cite{3author} Cummings, D{\v z}amonja and Morgan employ Prikry forcing with interleaved collapses to build a model with $2^{\aleph_{\omega+1}}>\aleph_{\omega+2}$ and a jointly universal family on $\aleph_{\omega+1}$ of size $\aleph_{\omega+2}$. We will use a preparatory forcing followed by Radin forcing with interleaved collapses to prove the following theorem.

\begin{theorem}
	Let $\kappa$ be supercompact and $\lambda<\kappa$ regular uncountable. Then there is a forcing extension in which $\kappa=\aleph_{\lambda}$, $2^{\aleph_{\lambda}}=2^{\aleph_{\lambda+1}}=\aleph_{\lambda+3}$ and there is a jointly universal family of graphs on $\aleph_{\lambda+1}$ of size $\aleph_{\lambda+2}$.
\end{theorem}

In section \ref{ultrafilter_sequences} we consider sequences of ultrafilters $\vec{u}$ from which it is possible to derive a version $\mathbb{R}_{\vec{u}}$ of Radin forcing with interleaved collapses. The forcing is similar to the one used by Foreman and Woodin in \cite{ForemanWoodin} but differs in the forcing interleaved and some technical details. Also we will show the desired properties of the forcing directly rather than proving that a supercompact Radin forcing has these properties and then projecting them.

We identify certain useful properties of sequences of ultrafilters that have been derived from supercompactness embeddings, and denote the class of sequences possessing these properties by $\mathcal{U}$. In section \ref{properties_of_radin} we prove some results about the forcing $\mathbb{R}_{\vec{u}}$ when $\vec{u}\in\mathcal{U}$; in particular that it has the Prikry property and that its generic filters can be conveniently characterised. In section \ref{preparatory_forcing} we define a preparatory forcing $\mathbb{Q}_{\vec{u}}$ that adds functions which, after Radin forcing, will become embeddings from graphs on $\kappa^+$ into a graph on $\kappa^+$ that we intend to make a member of our jointly universal family. We also prove that this preparatory forcing has properties including $\kappa$-directed closure and the $\kappa^+$-cc.

In section \ref{construction_of_model} we begin with $\kappa$ supercompact and perform a Laver preparation forcing. We then use a diamond sequence to identify ultrafilter sequences $\vec{u}^{\gamma}$ for $\gamma<\kappa^{+4}$, and carry out an iteration of the $\mathbb{Q}_{\vec{u}^{\gamma}}$ forcings. This allows us to extend a supercompactness embedding $j$ from $V$ to the generic extension, and from this $j$ we derive an ultrafilter sequence $\vec{u}$ in $\mathcal{U}$ and take $J$ that is $\mathbb{R}_{\vec{u}}$-generic over the universe resulting from the $\mathbb{Q}_{\vec{u}^{\gamma}}$-iteration. We show that there is a stationary set $S$ of points $\gamma$ in $\kappa^{+4}$ where $\vec{u}$ restricts to $\vec{u}^{\gamma}$ and $\vec{u}^{\gamma}\in\mathcal{U}$; then the characterisation of generic filters will show that $J$ is also generic for $\mathbb{R}_{\vec{u}^{\gamma}}$ over the same universe. Our final model will be built by stopping the iteration at a point in $S$ that is also a limit of $\kappa^{++}$-many members of $S$ and adjoining $J$; we will then have constructed $\kappa^{++}$-many graphs to use as members of our joint universal family, together with embeddings of every graph on $\kappa^+$ into them.

We will write $x:=y$ to mean $x$ is defined to equal $y$, and $x=:y$ to mean $y$ is defined to equal $x$. We write $f:A\rightharpoonup B$ for a partial function from $A$ to $B$. For forcing conditions $p$ and $q$ we write $p\parallel q$ to mean $p$ is compatible with $q$; for a formula $\varphi$ we write $p\parallel\varphi$ to mean $p$ decides whether or not $\varphi$ is true. Given an ultrafilter $u$, the quantification $\forall_u x:\varphi(x)$ will signify that $\{x\mid\varphi(x)\}\in u$. Our forcing convention is that $p\leq q$ means $p$ is stronger (more informative) than $q$.

\section{Ultrafilter sequences and the definition of $\mathbb{R}_{\vec{u}}$} \label{ultrafilter_sequences}

\subsection{The nature of ultrafilter sequences} We will be building sequences of the following form.

\begin{definition}
	A sequence $\vec{u}=\langle \kappa, u_i, \mathcal{F}_i\mid i<\lambda \rangle$ (which means that there is a single $\kappa$ together with $\lambda$-many each of the $u_i$ and $\mathcal{F}_i$) is a {\em proto ultrafilter sequence} if $\lambda<\kappa$, the $u_i$ are $\kappa$-complete ultrafilters on $V_{\kappa}$ and the $\mathcal{F}_i$ are sets of partial functions from $V_{\kappa}$ to $V_{\kappa}$. We will write $\kappa(\vec{u})$ for $\kappa$ and $\lh\vec{u}$ for $\lambda$ which we also call the {\em length} of $\vec{u}$. We stress that our use of the term ``length'' here differs from the usual convention.
\end{definition}

For $\beta$ a strongly inaccessible cardinal we define $\mathbb{C}(\alpha,\beta)$ to be the poset $\Coll(\alpha^{+5},<\beta)$ and $\mathbb{B}(\alpha,\beta)$ to be the regular open algebra derived from this poset. Note that $\mathbb{C}(\alpha,\beta)$ is contained in $V_{\beta}$ and has the $\beta$-cc so we are free to regard conditions in $\mathbb{B}(\alpha,\beta)$ as members of $V_{\beta}$. Given sequences $\vec{v}$ and $\vec{w}$ with $\kappa(\vec{v})<\kappa(\vec{w})$ we will also write $\mathbb{B}(\vec{v},\vec{w})$ for $\mathbb{B}(\kappa(\vec{v}),\kappa(\vec{w}))$. This is the forcing that we will interleave into our Radin generic sequence.

\begin{definition}
	Let $\kappa$ be strongly inaccessible, $i<\kappa$ and $u$ a $\kappa$-complete ultrafilter on $V_{\kappa}$ concentrating on proto ultrafilter sequences of length $i$. Then a {\em $u$-constraint} is a partial function $h:V_{\kappa}\rightharpoonup V_{\kappa}$ such that:
	\begin{itemize}
		\item $\dom h$ is in $u$ and consists of proto ultrafilter sequences of length $i$.
		\item For all $\vec{w}$ in $\dom h$, $h(\vec{w})\in \mathbb{B}(\kappa(\vec{w}),\kappa)-\{0\}$.
	\end{itemize}
	An {\em ultrafilter sequence} is defined by recursion on $\kappa(\vec{u})$ to be a proto ultrafilter sequence $\vec{u}=\langle \kappa, u_i, \mathcal{F}_i\mid i<\lambda\rangle$ such that each $\mathcal{F}_i$ is a non-empty set of $u_i$-constraints, and each $u_i$ concentrates on ultrafilter sequences of length $i$.
\end{definition}

Observe that if we form the ultrapower $j_u:V\rightarrow\Ult(V,u)$ we can regard $u$-constraints (modulo $u$) as representing members of the Boolean algebra $\mathbb{B}(\kappa,j_u(\kappa))^{\Ult(V,u)}$.

\begin{definition}
	We will need an auxiliary notion of {\em supercompact ultrafilter sequences}. Such sequences will be recursively defined to have the form $\vec{u}^*=\langle z, u^*_i, H^*_i \mid i<\lambda\rangle$ where there is some $\kappa(\vec{u}^*):=\kappa>\lambda$ with $z$ a set of ordinals that is a superset of $\kappa$, each $u^*_i$ is an ultrafilter on $[\kappa^{+4}]^{<\kappa}\times V_{\kappa}^{2i}$ that concentrates on supercompact ultrafilter sequences of length $i$, and each $H^*_i$ is a {\em $u^*_i$-constraint}. This last means that $\dom H^*_i \in u^*_i$, and for $\vec{w}^*\in\dom H^*_i$ we have $H^*_i(\vec{w}^*)\in\mathbb{B}(\vec{w}^*,\vec{u}^*)-\{0\}$.

	We also define an ordering on $u^*_i$-constraints by $L^*\leq K^*$ if $\dom L^*\subseteq\dom K^*$ and $L^*(\vec{w}^*)\leq K^*(\vec{w}^*)$ for all $\vec{w}^*$ in $\dom L^*$. We shall use similar orderings for other functions whose domains are required to lie in some ultrafilter.
\end{definition}

Observe that if we form the ultrapower $j_{u^*}:V\rightarrow\Ult(V,u^*)$ then we can regard $u^*$-constraints (modulo $u^*$) as representing members of the Boolean algebra $\mathbb{B}(\kappa, j_{u^*}(\kappa))^{\Ult(V,u^*)}$.

\subsection{Constructing ultrafilter sequences}

For the remainder of this section we work in the following context.

\begin{setting}
	Let $2^{\kappa} = \kappa^{+4}$ with $j:V\rightarrow M$ witnessing that $\kappa$ is $\kappa^{+4}$-supercompact. Let $\lambda<\kappa$ be regular uncountable.
\end{setting}

We will use $j$ to inductively build an ultrafilter sequence $\vec{u}=\langle \kappa, u_i, \mathcal{F}_i\mid i<\lambda \rangle$ with $\kappa(\vec{u})=\kappa$. In doing so we will need to construct an auxiliary supercompact ultrafilter sequence $\vec{u}^*=\langle j``\kappa^{+4}, u^*_i, H^*_i\mid i<\lambda \rangle$.

We will also define a function $\pi$ from supercompact ultrafilter sequences to ultrafilter sequences, given by
$$\pi(\langle z^*, w^*_i, K^*_i \mid i<\bar{\lambda}\rangle) := \langle z^*\cap\bar{\kappa}, \pi_i(w^*_i), \pi'_i(w^*_i,K^*_i)\mid i<\bar{\lambda}\rangle$$
with $\pi_i$ and $\pi'_i$ to be built as part of the induction and $\bar{\kappa}:=\kappa(\langle z^*, w^*_i, K^*_i \mid i<\bar{\lambda}\rangle)$. Note that the $u^*_i$ concentrate on sequences where $z^*\cap\bar{\kappa}$ is inaccessible. We will ensure as we induct on $\bar{\lambda}\leq\lambda$ that
\begin{equation}\tag{*}
	j(\pi)(\langle j` `\kappa^{+4}, u^*_i, H^*_i \mid i<\bar{\lambda}\rangle) = \langle \kappa, u_i, \mathcal{F}_i \mid i<\bar{\lambda}\rangle.
\end{equation}
Suppose we have defined $u_i$, $u^*_i$, $\mathcal{F}_i$, $H^*_i$, $\pi_i$ and $\pi'_i$ for $i<\bar{\lambda}$; this gives us the definition of $\pi$ on sequences of length up to $\bar{\lambda}$. Define
$$u^*_{\bar{\lambda}} := \{X\subseteq[\kappa^{+4}]^{<\kappa}\times V_{\kappa}^{2\bar{\lambda}} \mid \langle j``\kappa^{+4}, u^*_i, H^*_i \mid i<\bar{\lambda}\rangle \in j(X)\}$$
and
$$u_{\bar{\lambda}} := \{Y\subseteq V_{\kappa}\mid \langle \kappa, u^i, \mathcal{F}^i \mid i<\bar{\lambda}\rangle \in j(Y)\}.$$
For $w^*$ an ultrafilter on $[\bar{\kappa}^{+4}]^{<\bar{\kappa}}\times V_{\bar{\kappa}}^{2\bar{\lambda}}$ define
$$\pi_{\bar{\lambda}}(w^*):= \{Y\subseteq V_{\bar{\kappa}}\mid \pi^{-1}``Y \in w^* \}.$$
Note by (*) that $Y\in u_{\bar{\lambda}}$ is equivalent to $\pi^{-1}``Y \in u^*_{\bar{\lambda}}$ which, since $\pi$ and $j(\pi)$ agree on $V_{\kappa}$, is equivalent to $j(\pi)^{-1}``Y\in u^*_{\bar{\lambda}}$ and so to $Y\in j(\pi_{\bar{\lambda}})(u^*_{\bar{\lambda}})$. Therefore $u_{\bar{\lambda}}=j(\pi_{\bar{\lambda}})(u^*_{\bar{\lambda}})$. We now pause the construction to make some definitions.

\begin{definition}
	Let $w^*$ be an ultrafilter on $[\bar{\kappa}^{+4}]^{<\bar{\kappa}}\times V_{\bar{\kappa}}^{2\bar{\lambda}}$ that concentrates on supercompact ultrafilter sequences of length $\bar{\lambda}$, and $K^*$ a $w^*$-constraint. Then for $A\in w^*$ and $\vec{x}$ an ultrafilter sequence we define
	$$b(K^*, A)(\vec{x}):=\bigvee\{K^*(\vec{x}^*)\mid \pi(\vec{x}^*)=\vec{x}, \vec{x}^*\in A\}\in\mathbb{B}(\kappa(\vec{x}),\bar{\kappa})-\{0\}.$$
	Observe that the domain of $b(K^*,A)$ is the projection of $A$ under $\pi_{\bar{\lambda}}$, so it is in $\pi_{\bar{\lambda}}(w^*)$. Observe also that for $A'\subseteq A$ we have $b(K^*,A')\leq b(K^*,A)$ pointwise, so as $A$ ranges over $w^*$ the equivalence classes generated by the $b(K^*,A)$ yield a non-trivial filter base in $\mathbb{B}(\bar{\kappa},j_{\pi_{\bar{\lambda}}(w^*)}(\bar{\kappa}))^{\Ult(V,\pi_{\bar{\lambda}}(w^*))}$. We shall call the induced filter $\Fil(K^*)$.
\end{definition}

Now given $w^*$ and $K^*$ we define $\pi'_{\bar{\lambda}}(w^*,K^*) = \{g\mid [g]_{\pi_{\bar{\lambda}}(w^*)}\in \Fil(K^*)\}$, which will conclude our definition of $\pi$ for sequences of length up to $k+1$. We note that $\pi'_{\bar{\lambda}}(w^*,K^*)$ consists of all $g$ such that $g\geq b(K^*, A)$ for some $A\in w^*$, where we ensure a pointwise inequality be shrinking the $A$ as necessary. It remains to choose $H^*_{\bar{\lambda}}$, and then once we have done so we will conclude by defining $\mathcal{F}_{\bar{\lambda}} = j(\pi'_{\bar{\lambda}})(u^*_{\bar{\lambda}}, H^*_{\bar{\lambda}})$, which is to say $\mathcal{F}_{\bar{\lambda}}=\{h\mid [h]_{u_{\bar{\lambda}}}\in\Fil(H^*_{\bar{\lambda}})\}$. Care must be taken in selecting $H^*_{\bar{\lambda}}$ because we wish to ensure that the filter $\mathcal{F}_{\bar{\lambda}}$ it induces will be an ultrafilter. The following lemma will be helpful to that end.

\begin{lemma}
	Let $b\in \mathbb{B}(\kappa,j_{_{\bar{\lambda}}}(\kappa))^{\Ult(V,_{\bar{\lambda}})}$ and $K^*$ a $u^*_{\bar{\lambda}}$-constraint. Then there is a $u^*_{\bar{\lambda}}$-constraint $L^*\leq K^*$ such that either $b\in\Fil(L^*)$ or $\neg b\in\Fil(L^*)$.
\end{lemma}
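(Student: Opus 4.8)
The plan is to pull $b$ back along $\pi$ to a function on ultrafilter sequences, compare it pointwise with $K^*$, and then use that $u^*_{\bar{\lambda}}$ is an ultrafilter to decide which way to cut $K^*$ down.

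First I would fix a representative of $b$. Since in $\Ult(V,u_{\bar{\lambda}})$ the ordinal $\kappa$ is represented by $\vec{x}\mapsto\kappa(\vec{x})$ while $j_{u_{\bar{\lambda}}}(\kappa)$ is represented by the constant function with value $\kappa$, \L{}o\'{s}'s theorem yields a function $f$ with $\dom f\in u_{\bar{\lambda}}$, $[f]_{u_{\bar{\lambda}}}=b$, and $f(\vec{x})\in\mathbb{B}(\kappa(\vec{x}),\kappa)$ for every $\vec{x}\in\dom f$; shrinking $\dom f$ I may assume each such $\vec{x}$ is an ultrafilter sequence. The essential bookkeeping point, already established in the construction above, is that $\pi_{\bar{\lambda}}(u^*_{\bar{\lambda}})=u_{\bar{\lambda}}$ and $\kappa(\pi(\vec{w}^*))=\kappa(\vec{w}^*)$ for $u^*_{\bar{\lambda}}$-almost every supercompact ultrafilter sequence $\vec{w}^*$; for such $\vec{w}^*$ the values $K^*(\vec{w}^*)$ and $f(\pi(\vec{w}^*))$ then lie in the common algebra $\mathbb{B}(\kappa(\vec{w}^*),\kappa)$, so it makes sense to meet them.

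Next I would split $\dom K^*$ according to how each $K^*(\vec{w}^*)$ sits relative to $f(\pi(\vec{w}^*))$. Let
$$D:=\{\vec{w}^*\in\dom K^*\mid\pi(\vec{w}^*)\in\dom f\text{ and }K^*(\vec{w}^*)\wedge f(\pi(\vec{w}^*))\neq 0\},$$
$$E:=\{\vec{w}^*\in\dom K^*\mid\pi(\vec{w}^*)\in\dom f\text{ and }K^*(\vec{w}^*)\wedge\neg f(\pi(\vec{w}^*))\neq 0\}.$$
Because the value of a $u^*_{\bar{\lambda}}$-constraint is never $0$, every $\vec{w}^*$ with $\vec{w}^*\in\dom K^*$ and $\pi(\vec{w}^*)\in\dom f$ lies in $D\cup E$; thus $D\cup E\supseteq\dom K^*\cap\pi^{-1}``\dom f$, and the latter set is in $u^*_{\bar{\lambda}}$ since $\dom K^*\in u^*_{\bar{\lambda}}$ and $\dom f\in u_{\bar{\lambda}}=\pi_{\bar{\lambda}}(u^*_{\bar{\lambda}})$ forces $\pi^{-1}``\dom f\in u^*_{\bar{\lambda}}$. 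Hence $D\cup E\in u^*_{\bar{\lambda}}$, and as $u^*_{\bar{\lambda}}$ is an ultrafilter either $D\in u^*_{\bar{\lambda}}$ or $E\in u^*_{\bar{\lambda}}$.

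If $D\in u^*_{\bar{\lambda}}$ I would put $L^*(\vec{w}^*):=K^*(\vec{w}^*)\wedge f(\pi(\vec{w}^*))$ for $\vec{w}^*\in D$; this is a $u^*_{\bar{\lambda}}$-constraint with $L^*\leq K^*$, and for $\vec{x}\in\pi``D\subseteq\dom f$ infinite distributivity gives
$$b(L^*,D)(\vec{x})=f(\vec{x})\wedge\bigvee\{K^*(\vec{x}^*)\mid\pi(\vec{x}^*)=\vec{x},\ \vec{x}^*\in D\}\leq f(\vec{x}),$$
so $[b(L^*,D)]_{u_{\bar{\lambda}}}\leq b$ and hence $b\in\Fil(L^*)$ because $\Fil(L^*)$ is the upward closure of $\{[b(L^*,A)]\mid A\in u^*_{\bar{\lambda}}\}$. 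If instead $E\in u^*_{\bar{\lambda}}$, the symmetric choice $L^*(\vec{w}^*):=K^*(\vec{w}^*)\wedge\neg f(\pi(\vec{w}^*))$ on $E$ gives $b(L^*,E)(\vec{x})\leq\neg f(\vec{x})$ and hence $\neg b\in\Fil(L^*)$. I do not expect a genuine obstacle here; the only thing requiring care is keeping the two ultrafilters $u^*_{\bar{\lambda}}$ and $u_{\bar{\lambda}}$ and their collapse algebras correctly aligned through $\pi$, which is exactly what the preceding definitions arrange.
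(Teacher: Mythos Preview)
Your argument is correct and follows essentially the same route as the paper: represent $b$ by a function $f$, compare $K^*(\vec{w}^*)$ with $f(\pi(\vec{w}^*))$ pointwise, and use that $u^*_{\bar{\lambda}}$ is an ultrafilter to pick the side on which to cut $K^*$ down. The only cosmetic difference is that the paper first chooses $L^*(\vec{x}^*)\leq K^*(\vec{x}^*)$ below one of $f(\pi(\vec{x}^*))$ or $\neg f(\pi(\vec{x}^*))$ and then splits into $A^+$, $A^-$, whereas you split into $D$, $E$ first and then set $L^*$ to the meet; your explicit verification that $b(L^*,D)\leq f$ via distributivity is a nice touch but only the trivial inequality $K^*(\vec{w}^*)\wedge f(\pi(\vec{w}^*))\leq f(\pi(\vec{w}^*))$ is actually needed.
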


\begin{proof}
	Say $b =: [f]_{_{\bar{\lambda}}}$ and define $A:=\{\vec{x}^*\in \dom K^*\mid \pi(\vec{x})\in \dom f\} \in u^*_{\bar{\lambda}}$. Then for each $\vec{x}^* \in A$ take $L^*(\vec{x}^*)\leq K(\vec{x}^*)$ such that either $L^*(\vec{x}^*)\leq f(\pi(\vec{x}^*))$ or $L^*(\vec{x}^*)\leq \neg f(\pi(\vec{x}^*))$. Define $A^+$ to be the set of places in $A$ where the first case occurs, and $A^-$ to be the set of places where the second does. One of these is in $u^*_{\bar{\lambda}}$ and restricting the domain of $L^*$ to this set will give $L^*$ the required properties.
\end{proof}

For $\bar{\kappa}<\kappa$ the forcing $\mathbb{C}(\bar{\kappa}, \kappa)$ has the $\kappa$-chain condition, so $|\mathbb{B}(\bar{\kappa},\kappa)|=\kappa$. This tells us by elementarity that $|\mathbb{B}(\kappa,j_{_{\bar{\lambda}}}(\kappa))^{\Ult(V,_{\bar{\lambda}})}| = |j_{_{\bar{\lambda}}}(\kappa)| = 2^{\kappa} = \kappa^{+4}$. Now the $u^*_{\bar{\lambda}}$-constraints can be regarded as members of the regular open algebra $\mathbb{B}(\kappa, j_{u^*_{\bar{\lambda}}}(\kappa))^{\Ult(V,u^*_{\bar{\lambda}})}$, in the non-zero part of which the forcing $\mathbb{C}(\kappa, j_{u^*_{\bar{\lambda}}}(\kappa))^{\Ult(V,u^*_{\bar{\lambda}})}=\Coll(\kappa^{+5},<j_{u^*_{\bar{\lambda}}}(\kappa))^{\Ult(V,u^*_{\bar{\lambda}})}$ is dense. The $\kappa^{+4}$-supercompactness of $j_{u^*_{\bar{\lambda}}}$ tells us that the latter forcing is $\kappa^{+5}$-closed, so we can repeatedly apply the above lemma to obtain a $u^*_{\bar{\lambda}}$-constraint $H^*_{\bar{\lambda}}$ such that $\Fil(H^*_{\bar{\lambda}})$ is an ultrafilter. This concludes the inductive construction.

\subsection{Properties of ultrafilter sequences I}

We collect together all save one of the properties that we will want our ultrafilter sequences to possess. The final property is postponed because it requires the definition of $\mathbb{R}_{\vec{u}}$ to state.

Note that for $h\in \mathcal{F}_i$ and $s\in V_{\kappa}$ the $h\downharpoonright s$ notation used here means that the domain of $h$ is restricted to $\{\vec{w}\mid s \in V_{\kappa(\vec{w})}\}$.

\begin{definition}
	We define $\mathcal{U}'$ to be the class of all ultrafilter sequences $\vec{u}=\langle \bar{\kappa}, u_i, \mathcal{F}_i \mid i < \bar{\lambda} \rangle$ that satisfy the following properties:
	\begin{enumerate}
	 	\item The ultrafilter $u_i$ concentrates on sequences from $\mathcal{U}'$ of length $i$ (so this definition is recursive).
	 	\item If $h$ is in $\mathcal{F}_i$ and $h$ is equal to $h'$ modulo $u_i$ then $h'$ is also in $\mathcal{F}_i$.
	 	\item The set of Boolean values represented by the functions $\mathcal{F}_i$ is a $\bar{\kappa}$-complete ultrafilter on $\mathbb{B}(\bar{\kappa},j_{u_i}(\bar{\kappa}))^{\Ult(V,u_i)}$.
		\item (Normality) For all $i<\bar{\lambda}$, given $\langle h^s \mid s \in V_{\bar{\kappa}} \rangle$ with $h^s \in \mathcal{F}_i$ then there is $h \in \mathcal{F}_i$ such that $h \leq h^s \downharpoonright s$ for all $s \in V_{\bar{\kappa}}$.
		\item Let $i'<i''<\bar{\lambda}$ and $e\in \mathcal{F}_{i'}$. Then there is a $u_{i''}$-large set of ultrafilter sequences $\vec{w}=\langle\bar{\kappa}(\vec{w}), w_i, \mathcal{G}_i \mid i<i''\rangle$ such that $e\upharpoonright\bar{\kappa}(\vec{w})$ is in $\mathcal{G}_{i'}$.
	\end{enumerate}
\end{definition}

\begin{lemma}
	Let $\vec{u}$ be constructed from $j$ as above. Then $\vec{u}\in\mathcal{U'}$.
\end{lemma}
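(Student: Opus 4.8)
The plan is to verify each of the five clauses of the definition of $\mathcal{U}'$ for the sequence $\vec{u} = \langle \kappa, u_i, \mathcal{F}_i \mid i < \lambda \rangle$ built in the previous subsection, working by induction on $\lambda$ (or on $\bar\lambda$ through the construction), so that at each stage we may assume the initial segments and the relevant $u_i$-large sets of shorter sequences already lie in $\mathcal{U}'$. Throughout I will exploit the defining reflection property $u_i = \{Y \mid \langle \kappa, u_j, \mathcal{F}_j \mid j < i\rangle \in j(Y)\}$ together with equation (*), which says $j(\pi)$ applied to the supercompact sequence of length $i$ returns the initial segment of $\vec u$ of length $i$; these let me pull facts about $\vec u$ back to facts true in $M$ about $\langle \kappa, u_j, \mathcal{F}_j \mid j<i\rangle$, hence $u_i$-large.

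For clause (1): since $\vec u$ (restricted to length $i$) was constructed to be an ultrafilter sequence and, by the inductive hypothesis, satisfies all of (1)--(5), the statement ``$\langle\kappa,u_j,\mathcal F_j\mid j<i\rangle\in\mathcal U'$'' holds in $M$ — here one checks $\mathcal U'$ is defined by a condition absolute enough between $V$ and $M$ given $\kappa^{+4}$-supercompactness, which is the routine point — and then by the definition of $u_i$ this set of sequences is in $u_i$. Clause (2) is immediate from the definition $\mathcal{F}_i = \{h \mid [h]_{u_i} \in \Fil(H^*_i)\}$, since membership depends only on the $u_i$-equivalence class. Clause (3) is exactly what was arranged at the end of the construction: the Boolean values represented by $\mathcal{F}_i$ are precisely $\Fil(H^*_i)$, which we chose $H^*_i$ to make an ultrafilter, and $\kappa$-completeness of that ultrafilter follows because $\mathbb{C}(\kappa, j_{u^*_i}(\kappa))^{\Ult(V,u^*_i)}$ is $\kappa^{+5}$-closed, so in particular $\kappa$-closed, and a dense $\kappa$-closed subset forces the regular open algebra to carry a $\kappa$-complete ultrafilter; alternatively argue via $\kappa$-completeness of $u_i$ and the fact that $\Fil(H^*_i)$-membership reduces to a single instance of the preceding Lemma.

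The substantive work is in clauses (4) and (5), and I expect clause (5) to be the main obstacle. For (4) (Normality), given $\langle h^s \mid s \in V_\kappa\rangle$ with each $h^s \in \mathcal{F}_i$, I would represent each $h^s$ via a $u^*_i$-constraint, use the $\kappa^{+5}$-closure of the collapse in the ultrapower by $u^*_i$ to find a single $u^*_i$-constraint $L^* \le H^*_i$ that is below (the constraint coding) $h^s\downharpoonright s$ for every $s$ — this is a diagonal intersection of fewer than $\kappa^{+5}$, indeed $|V_\kappa| = \kappa$-many, conditions, which the closure accommodates — then set $h$ to be any element of $\mathcal{F}_i$ whose $u_i$-class is $[b(L^*,A)]$ for suitable $A \in u^*_i$; the pointwise domination $h \le h^s \downharpoonright s$ on a $u_i$-large set, hence (after the harmless shrinking used throughout the construction) everywhere on its domain, then follows from $L^* \le$ the codes of the $h^s\downharpoonright s$ and from how $b(\cdot,\cdot)$ and $\Fil$ were defined. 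For (5), the point is that if $i' < i'' < \lambda$ and $e \in \mathcal{F}_{i'}$, then in $M$ the sequence $\langle\kappa, u_j, \mathcal F_j\mid j<i''\rangle$ has the property that $e \upharpoonright \kappa$ lies in $\mathcal{F}_{i'}$ (its $i'$-th constraint-set) — this is nearly a tautology in $M$ since that sequence is literally an initial segment of $\vec u$ — and then applying the definition of $u_{i''}$ transfers this to: $\{\vec w \mid e \upharpoonright \bar\kappa(\vec w) \in \mathcal{G}_{i'}\} \in u_{i''}$. The care needed is that $e \upharpoonright \bar\kappa(\vec w)$ makes sense and defines the right object — i.e., that the restriction operation on constraints commutes with $\pi$ and with the coordinates of the reflected sequences — and checking this compatibility, tracing through the definitions of $\pi_{\bar\lambda}$, $\pi'_{\bar\lambda}$, $b(K^*,A)$ and $\Fil(K^*)$, is where the genuine bookkeeping lies and is the step most likely to hide a subtlety.
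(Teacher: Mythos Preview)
Your plan for clauses (1)--(3) and (5) is essentially what the paper does, and your worry about (5) is misplaced: the paper disposes of it in one line, observing that $j(e)\upharpoonright\kappa(\vec{u}\upharpoonright i'') = e \in \mathcal{F}_{i'}$, so the seed $\vec{u}\upharpoonright i''$ witnesses that the desired set is in $u_{i''}$. No compatibility-checking between $\pi$ and restriction is needed because the argument never touches the supercompact side.

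The real divergence, and the place where your proposal has a gap, is clause (4). The paper does \emph{not} argue via closure of the collapse forcing in $\Ult(V,u^*_i)$. Instead it writes each $h^s \geq b(H^*_i, A^s)$ for some $A^s \in u^*_i$, forms the diagonal intersection of the \emph{sets} $A^s$,
\[
A := \{\vec{w}^* \mid \forall s \in V_{\kappa(\vec{w}^*)} : \vec{w}^* \in A^s\},
\]
checks $A \in u^*_i$ directly from the seed definition, and sets $h := b(H^*_i, A)$. The pointwise inequality $h(\vec{w}) \leq h^s(\vec{w})$ for $\vec{w}$ above $s$ is then immediate, since the supremum defining $b(H^*_i,A)(\vec{w})$ ranges over a subset of the one defining $b(H^*_i,A^s)(\vec{w})$.

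Your route has two soft spots. First, ``represent each $h^s$ via a $u^*_i$-constraint'' is doing unexplained work: $h^s$ is a $u_i$-constraint, and the passage to the $u^*_i$ side is precisely what the $b(\cdot,\cdot)$ machinery and the diagonal intersection of the $A^s$ handle cleanly. Second, closure gives lower bounds of \emph{descending} sequences, not of an arbitrary $\kappa$-sized family; you would first need the meet to be nonzero, and then you would still need to pull the resulting $L^*$ back to an $h \in \mathcal{F}_i$ satisfying a genuine \emph{pointwise} inequality $h \leq h^s\downharpoonright s$ for all $s$ simultaneously --- ``harmless shrinking'' from a mod-$u_i$ statement does not obviously deliver this when there are $\kappa$-many $s$. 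The paper's diagonal intersection of the $A^s$ sidesteps all of this.
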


\begin{proof}
	The first three clauses are immediate.
	\begin{enumerate} [(1)]
		\setcounter{enumi}{3}
		\item (Normality) We are given $i<\lambda$ and $\langle h^s \mid s \in V_{\kappa} \rangle \subseteq \mathcal{F}_i$. Say $h^s\geq b(H^*_i,A^s)$ with $A^s\in u^*_i$. Take the diagonal intersection of the $A^s$,
	$$A:=\{\vec{w}^* \mid \forall s \in V_{\kappa(\vec{w}^*)}: \vec{w}^*\in A^s\}.$$
	We have $\forall s\in V_{\kappa(\vec{u}^*\upharpoonright i)}: \vec{u}^*\upharpoonright i \in j(A^s)$, which is to say $\vec{u}^*\upharpoonright i\in j(A)$ so $A\in u^*_i$. Then $h:=b(H^*_i,A)$ will be our candidate.

	Given $s\in V_{\kappa}$ we want $h \leq h^s \downharpoonright s$, so given $\vec{w}\in \dom h$ above $s$ we want $h(\vec{w})\leq h^s(\vec{w})$. Now $h(\vec{w})$ is the supremum of $K^*(\vec{w}^*)$ over $\vec{w}^*\in A$ such that $\pi(\vec{w}^*)=\vec{w}$. All of these $\vec{w}^*$ have $\kappa(\vec{w}^*)=\kappa(\vec{w})$ above $s$, so they must also be members of $A^s$. But $h^s(\vec{w})$ is the supremum of $K^*(\vec{w}^*)$ over members of $A^s$, so $h^s(\vec{w})\geq h(\vec{w})$.
		\item We are given $i'<i''<\lambda$ and $e\in \mathcal{F}_{i'}$ and note that $j(e)\upharpoonright\kappa(\vec{u}\upharpoonright i'') = e \in \mathcal{F}_{i'}$. Then by elementarity there is a $u_{i''}$-large set of sequences $\vec{w}=\langle \kappa(\vec{w}), w_i, \mathcal{G}_{i'}\rangle$, as required.
	\end{enumerate}
\end{proof}

\subsection{Definition of the Radin forcing $\mathbb{R}_{\vec{u}}$} \label{defineR}

We are given an ultrafilter sequence $\vec{u}\in\mathcal{U'}$ and define $\bar{\kappa}:=\kappa(\vec{u})$ and $\bar{\lambda}:=\lh\vec{u}$.

For notational convenience, given $\vec{w} =: \langle \kappa(\vec{w}), w_i, \mathcal{F}_i \mid i<\lh\vec{w}\rangle$ we will start writing $\mathcal{F}_{\vec{w},i}$ for $\mathcal{F}_i$ and $\mathcal{F}_{\vec{w}}$ for the set of
functions $e:V_{\kappa(\vec{w})}\rightharpoonup V_{\kappa(\vec{w})}$ such that defining $e_i:=e\upharpoonright\{\vec{v}\mid\lh\vec{v}=i\}$ gives us $e_i\in \mathcal{F}_{\vec{w},i}$ for all $i<\lh\vec{w}$. Note that $\dom e$ is permitted to include sequences that are longer than $\vec{w}$ itself.

\begin{definition}
	Let $\vec{w}\in\mathcal{U}'$. Then an {\em upper part} for $\mathbb{R}_{\vec{w}}$ is a member $e$ of $\mathcal{F}_{\vec{w}}$ such that:
	$$\forall\vec{v}\in\dom e: e\upharpoonright\kappa(\vec{v})\in \mathcal{F}_{\vec{v}}.$$
\end{definition}

We claim that any $e$ in $\mathcal{F}_{\vec{w}}$ can have its domain shrunk to make it into an upper part. Define $e^0:=e$ and then by the final clause of the definition of $\mathcal{U}'$ we have that
$$A^1:=\{\vec{v}\in\dom e\mid e^0\upharpoonright\kappa(\vec{v})\in \mathcal{F}_{\vec{v}}\}\in \bigcap w_i$$
so we can define $e^1:=e^0\upharpoonright A^1 \in \mathcal{F}_{\vec{w}}$. Iterating this process $\omega$ times and intersecting the $A^n$ we reach $e'\leq e$ which has the required property. From now on we shall perform such shrinking without comment when building forcing conditions.

\begin{definition}
	A {\em suitable triple} is $(\vec{w},e,q)$ satisfying the following conditions:
	\begin{itemize}
		\item $\vec{w}\in\mathcal{U}'$.
		\item  $e$ is an upper part for $\mathbb{R}_{\vec{w}}$.
		\item $q \in \mathbb{B}(\kappa(\vec{w}),\kappa)-\{0\}$.
	\end{itemize}
	A {\em direct extension} of $(\vec{w},e,q)$ is a suitable triple $(\vec{w},e',q')$ such that:
	\begin{itemize}
		\item $e'\leq e$ (i.e. $\dom e'\subseteq\dom e$ and $e' \leq e$ pointwise).
		\item $q' \leq q$.
	\end{itemize}
	Another suitable triple $(\vec{v},d,p)$ is {\em addable below} $(\vec{w},e,q)$ if it satisfies the following:
	\begin{itemize}
		\item $\vec{v} \in \dom e$.
		\item $d \leq e\upharpoonright\kappa(\vec{v})$.
		\item $p\leq e(\vec{v})$.
	\end{itemize}
\end{definition}

We observe that for every $\vec{v}\in\dom e$ the definition of ``upper part'' has assured us that $(\vec{v},e\upharpoonright\kappa(\vec{v}),e(\vec{v}))$ is both a suitable triple and addable below $(\vec{w},e,q)$.

\begin{definition}
	A {\em condition} in $\mathbb{R}_{\vec{u}}$ is a finite sequence
	$$s = ((\vec{w}_0,e_0,q_0),...,(\vec{w}_{n-1},e_{n-1},q_{n-1}), (\vec{u}, h))$$
	such that each $(\vec{w}_k,e_k,q_k)$ is a suitable triple, the $\kappa(\vec{w}_k)$ are increasing, $q_k\in\mathbb{B}(\vec{w}_k,\vec{w}_{k+1})$, and $h$ is an upper part for $\mathbb{R}_{\vec{u}}$. We also require that $\kappa(\vec{w}_0)=\omega$, $\lh\vec{w}_0=0$ and $e_0=\phi$. We will call such a $((\vec{w}_0,e_0,q_0),...,(\vec{w}_{n-1},e_{n-1},q_{n-1}))$ a {\em lower part} for the forcing.

	{\em Extension} in $\mathbb{R}_{\vec{u}}$ is given by $s'\leq s$ if
	$$s'=((\vec{v}_0,d_0,p_0),...,(\vec{v}_{m-1},d_{m-1},p_{m-1}),(\vec{u},h'))$$
	such that $h'\leq h$, every $\vec{w}_k$ occurs as some $\vec{v}_l$, and every $(\vec{v}_l,d_l,p_l)$ is either a direct extension of one of the $(\vec{w}_k,e_k,q_k)$ or addable below one of them or addable below $(\vec{u},h)$.

	{\em Direct extension} in $\mathbb{R}_{\vec{u}}$ is given by $s'\leq^* s$ if
	$$s'=((\vec{w}_0,e'_0,q'_0),...,(\vec{w}_{n-1},e'_{n-1},q'_{n-1}), (\vec{u}, h'))$$
	with $h'\leq h$ and $(\vec{w}_k,e'_k,q'_k)$ a direct extension of $(\vec{w}_k,e_k,q_k)$ for $k<n$. \

	For lower parts $r$ and $r'$ we define extension $r'\leq r$ in the same way as for conditions, except that all triples from $r'$ must by either direct extensions of, or addable below, a triple from $r$. Note that this compels $\kappa(\max r') = \kappa(\max r)$. Likewise we have a notion of $\leq^*$ on lower parts, and a {\em $^*$-open} set of lower parts is one that is downward-closed under this relation.
\end{definition}

Observe that a forcing condition is required to have a triple $((\langle\omega\rangle,\phi,p))$ as a member of its stem for some $p$. However we shall write $((\vec{u},h))$ as an abbreviation for $((\langle\omega\rangle,\phi,\phi),(\vec{u},h))$ at times when we are only interested in the upper part of the forcing.

If we force below a condition $((\vec{u},h))$ such that $\dom h$ contains only sequences of length less than $\lambda$ then $\mathbb{R}_{\vec{u}}$ will add a generic sequence of the form $\langle \vec{w}_{\alpha}, g_{\alpha} \mid \alpha<\omega^{\lambda}\rangle$, where $g_i$ is generic in $\mathbb{B}(\vec{w}_{\alpha},\vec{w}_{\alpha+1})$. The $\omega^{\lambda}$ is ordinal exponentiation so as $\lambda$ is regular uncountable we in fact have $\omega^\lambda=\lambda$. This collapses all cardinals in the intervals $(\kappa(\vec{w}_{\alpha})^{+5},\kappa(\vec{w}_{\alpha+1}))$ and we shall see later that it preserves all other cardinals, so it will make $\kappa$ into $\aleph_{\lambda}$.

More generally, forcing with $\mathbb{R}_{\vec{u}}$ will add a generic sequence $\langle \vec{w}_{\alpha}, g_{\alpha} \mid \alpha<\theta+\lambda\rangle$
for some ordinal $\theta$.

\subsection{Properties of ultrafilter sequences II}

We are finally in a position to make the definition that we will use during the main construction.

\begin{definition}
	The class $\mathcal{U}$ is defined recursively to consist of all $\vec{u}\in\mathcal{U}'$ such that the $u_i$ concentrate on members of $\mathcal{U}$, and $\vec{u}$ satisfies the following additional property. (Note that the $h'\upharpoonright\vec{w}$ is given by restricting the domain of $h'$ to sequences $\vec{v}$ such that $\kappa(\vec{v})<\kappa(\vec{w})$ and $\lh\vec{v}<\lh\vec{w}$.)
	\begin{enumerate}[(1)]
		\setcounter{enumi}{5}
		\item (Capturing) Let $h$ be an upper part for $\mathbb{R}_{\vec{u}}$ and $X$ a $^*$-open set of lower parts for $\mathbb{R}_{\vec{u}}$. Then there is an upper part $h' \leq h$ such that for all lower parts $s$ and all $i<\lh\vec{u}$ we have one of:
		\begin{enumerate}[(i)]
			\item For all $\vec{w}\in \dom h'_i$ there do not exist $e$ and $q\leq h'(\vec{w})$ such that $s\frown((\vec{w},e,q)) \in X$.
			\item For all $\vec{w}\in \dom h'_i$ and $\vec{x} \in \dom h'$ such that $\kappa(\vec{w})<\kappa(\vec{x})$ there are densely many $q$ in $\mathbb{B}(\vec{w},\vec{x})$ below $h'_i(\vec{w})$ such that $s\frown((\vec{w},h'\upharpoonright\vec{w},q)) \in X$.
		\end{enumerate}
	\end{enumerate}
\end{definition}

\begin{proposition}
	Let $\vec{u}$ be constructed from a supercompactness embedding $j$ as before. Then $\vec{u}\in\mathcal{U}$.
\end{proposition}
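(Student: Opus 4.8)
The plan is to verify the Capturing property (clause (6)) for the constructed $\vec{u}$, using the supercompact ultrafilter sequence $\vec{u}^*$ and the projection $\pi$ that were built alongside $\vec{u}$. The strategy parallels the proof of Normality: I will work in $M=\Ult(V,j)$ (or rather use the supercompactness embedding $j$ directly), reflect the desired property, and then pull back through $\pi$. More precisely, given an upper part $h$ for $\mathbb{R}_{\vec{u}}$ and a $^*$-open set $X$ of lower parts, I first lift everything to the supercompact side: I find a $u^*_i$-constraint $H'^*_i\leq H^*_i$ for each $i<\lambda$ that ``decides'' the dichotomy (i) vs.\ (ii) of clause (6) in a uniform way, then set $h'_i := b(H'^*_i, A)$ for a suitable $A\in u^*_i$ and take $h'$ to be the amalgamation of the $h'_i$ (shrunk to be an upper part, as licensed by the remark after the definition of upper part).

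The key steps, in order, are: \textbf{(1)} For a fixed lower part $s$ and fixed $i$, consider on the supercompact side the set of $\vec{w}^*\in\dom H^*_i$ for which there exist $e$ and $q\leq b$ (for various Boolean values $b$) with $s\frown((\pi(\vec{w}^*),e,q))\in X$; since $X$ is $^*$-open this is a reasonably definable set, and I can refine $H^*_i$ below each such $\vec{w}^*$ to either land in the ``no extension'' case (i) or to guarantee a dense set of good $q$'s in the relevant Boolean algebra $\mathbb{B}(\vec{w}^*,\vec{u}^*)$. This uses the $\kappa^{+5}$-closure of $\Coll(\kappa^{+5},<j_{u^*_i}(\kappa))^{\Ult(V,u^*_i)}$, exactly as in the selection of $H^*_i$ itself, to carry out the required iteration of refinements along the $\leq$-ordering of $u^*_i$-constraints. \textbf{(2)} Take a diagonal intersection over all $s\in V_\kappa$ (there are only $<\bar\kappa$-many relevant lower parts below any given point) to obtain a single $A\in u^*_i$ and a single refined constraint $H'^*_i$ that works simultaneously for all $s$, just as the diagonal intersection $A:=\{\vec w^*\mid\forall s\in V_{\kappa(\vec w^*)}:\vec w^*\in A^s\}$ was used in the Normality argument. \textbf{(3)} Transfer via $\pi$: set $h'_i:=b(H'^*_i,A)$, so that $h'_i\in\mathcal{F}_i$ and $h'_i\leq h_i$ pointwise by the same supremum computation as in Normality; then for $\vec{w}\in\dom h'_i$, the value $h'(\vec{w})$ is $\bigvee\{H'^*_i(\vec{w}^*)\mid\pi(\vec{w}^*)=\vec{w},\ \vec{w}^*\in A\}$, and the dichotomy chosen upstairs projects down to exactly dichotomy (i) or (ii) of Capturing because $\pi$ acts as the identity on the $\kappa(\vec{w}^*)$ for $\vec{w}^*$ above $s$, and the Boolean algebras match up ($\mathbb{B}(\vec{w}^*,\vec{u}^*)$ projects onto $\mathbb{B}(\vec{w},\vec{x})$-type algebras). \textbf{(4)} Finally, check that $j(\pi)$ applied to the length-$\lambda$ supercompact sequence returns $\vec{u}$ with these new constraints, so that property~(*) is respected and the recursion hypothesis ``$u_i$ concentrate on members of $\mathcal{U}$'' is maintained; this is where the earlier established clauses $\mathcal{U}'\subseteq$ (properties 1--5) for $\vec{u}$ and the inductive assumption on shorter sequences get invoked.

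The main obstacle I anticipate is step (1): correctly formulating, on the supercompact side, the statement whose two alternatives project to (i) and (ii), and verifying that refining a $u^*_i$-constraint can always achieve one of them. The subtlety is that alternative (ii) asks for \emph{densely many} good $q$ below $h'_i(\vec{w})$ in $\mathbb{B}(\vec{w},\vec{x})$ for \emph{every} longer $\vec{x}\in\dom h'$ — so the refinement at $\vec{w}^*$ must be made robust against all possible later coordinates $\vec{x}^*$ simultaneously. I expect this to be handled by a further diagonalization over $\vec{x}^*\in\dom H'^*$ inside the closure argument: because the collapse $\Coll(\kappa^{+5},<j_{u^*_i}(\kappa))$ is highly closed in the ultrapower, one can thin out the Boolean value $H^*_i(\vec{w}^*)$ to a single condition that is below all the relevant dense-set witnesses at once, and then the fact that $X$ is $^*$-open ensures these choices cohere as the lower part $s$ grows. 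Once this ``upstairs'' decision is in place, the projection through $\pi$ and the diagonal-intersection bookkeeping are routine adaptations of the Normality proof already given.
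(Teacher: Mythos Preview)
Your high-level architecture---lift to the supercompact side, refine the constraints $H^*_i$, diagonalize over lower parts $s$, and project via $b(\cdot,\cdot)$---matches the paper's. But two concrete mechanisms are missing, and without them the argument does not close.

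\textbf{The self-reference in clause (ii).} Capturing demands that $s\frown((\vec{w},\,h'\upharpoonright\vec{w},\,q))\in X$, where the second coordinate is the restriction of the very $h'$ you are constructing. Your proposal never confronts this circularity. The paper breaks it in two stages: first it fixes, for each lower part $s$, a witness function $f^s$ so that whenever some $(\vec{w},e,q)$ works, already $(\vec{w},f^s(\vec{w}),q')$ works for some $q'$; it then proves the dichotomy with $f^s(\vec{w})$ in the second slot. Only afterwards, in a separate pass, does it arrange $h''\upharpoonright\vec{w}\leq f^s(\vec{w})$ by defining $d^s_{i,k}:=j(f^s)(\vec{u}\upharpoonright k)\upharpoonright\{\lh=i\}$, shrinking domains so that $d^s_{i,k}$ and $f^s(\vec{w})$ agree on the relevant set, and using normality one more time. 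The $^*$-openness of $X$ then upgrades $f^s(\vec{w})$ to $h''\upharpoonright\vec{w}$. Nothing in your steps (1)--(4) produces this, and your appeal to $^*$-openness in the last sentence is too vague to substitute for it.

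\textbf{The ``for all $\vec{x}$'' quantifier.} You propose to handle the requirement that the dense set of $q$'s live in $\mathbb{B}(\vec{w},\vec{x})$ for \emph{every} $\vec{x}\in\dom h'$ above $\vec{w}$ by a closure-based diagonalization over $\vec{x}^*$. This does not work: refining the single Boolean value $H^*_i(\vec{w}^*)\in\mathbb{B}(\kappa(\vec{w}),\kappa)$ does not give you control over density in the smaller algebras $\mathbb{B}(\kappa(\vec{w}),\kappa(\vec{x}))$ for unboundedly many $\vec{x}$. The paper's route is quite different: it first establishes density only in the single algebra $\mathbb{B}(\vec{w},\vec{u})=\mathbb{B}(\kappa(\vec{w}),\kappa)$, then invokes the $\kappa$-chain condition of $\mathbb{C}(\vec{w},\vec{u})$ to find a maximal antichain inside that dense set which is bounded below some $\eta_{s,i,\vec{w}}<\kappa$, and finally thins $\dom h'$ to sequences $\vec{x}$ with $\kappa(\vec{x})$ a closure point of $(s,i,\vec{w})\mapsto\eta_{s,i,\vec{w}}$. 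That chain-condition step is the real content here, and it is absent from your plan.

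Your step (4) is not needed: the sequence $\vec{u}$ is already fixed and $(*)$ was verified during its construction; the task is only to check clause (6) for it.
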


\begin{proof}
	Say $\vec{u}$ is of the form $\langle \kappa, u_i, \mathcal{F}_i \mid i < \lambda \rangle$ and the supercompact ultrafilter sequence used in the construction is $\langle z,u^*_i,H_i\mid i<\lambda\rangle$. We have already established that $\vec{u} \in \mathcal{U}'$ so it remains to prove capturing. For each lower part $s$ begin by defining witnesses $f^s$ such that for all $\vec{w}\in\dom h$, if there are $e$ and $q\leq h(\vec{w})$ such that $s\frown ((\vec{w},e,q))\in X$ then there is $q\leq h(\vec{w})$ such that $s\frown ((\vec{w},f^s(\vec{w}),q))\in X$.

	We may assume that each $h_i$ is of the form $b(H_i, B_i)$ for some $B_i\in u^*_i$. For each lower part $s$ and each $i<\lambda$ choose $H^s_i \leq H_i$ such that for all $\vec{w}^* \in \dom H^s_i$ for which there exists $q \leq H_i(\vec{w}^*)$ with $s\frown ((\pi(\vec{w}^*),f^s(\pi(\vec{w}^*)),q))\in X$ we have $s \frown ((\pi(\vec{w}^*),f^s(\pi(\vec{w}^*)),H^s_i(\vec{w}^*)))\in X$. By normality take $H'_i$ such that for all $i$ and $s$ we have $H'_i \leq H^s_i\downharpoonright s$. For each $i<\lambda$ and lower part $s$ we can choose $C^s_i \subseteq B_i$ a member of $u^*_i$ such that one of the following occurs:
	\begin{enumerate}[(i)]
		\item For every $\vec{w}^*$ in $C^s_i$ there does not exist a $q \leq H_i(\vec{w}^*)$ such that $s\frown ((\pi(\vec{w}^*),f^s(\pi(\vec{w}^*)),q)) \in X$.
		\item For every $\vec{w}^*$ in $C^s_i$ we have $s\frown ((\pi(\vec{w}^*),f^s(\pi(\vec{w}^*)),H'_i(\vec{w}^*)))\in X$.
	\end{enumerate}
	Define $C_i := \triangle_s C^s_i$ and $h'_i:=b(H'_i, C_i)$. Observe that by construction $\Fil(H_i)$ is an ultrafilter and so equal to $\Fil(H'_i)$, whence $h'_i\in\mathcal{F}_i$. We can now prove a weaker version of the desired dichotomy.

	\begin{claim}
		Let $s$ be a lower part and $i<\lambda$. Then we have one of:
			\begin{enumerate}[(i)]
				\item For all $\vec{w}\in \dom h'_i$ there do not exist $e$ and $q\leq h'(\vec{w})$ such that $s\frown((\vec{w},e,q)) \in X$.
				\item For all $\vec{w}\in \dom h'_i$ there are densely many $q$ in $\mathbb{B}(\vec{w},\vec{u})$ below $h'_i(\vec{w})$ such that $s\frown((\vec{w},f^s(\vec{w}),q)) \in X$.
			\end{enumerate}
	\end{claim}

	\begin{proof}
		Suppose (i) is false, so we have $\vec{w}\in\dom h'_i$, $e$ and $q' \leq h'(\vec{w})\leq h(\vec{w})$ such that $s\frown((\vec{w},e,q'))\in X$. The choice of $f^s$ then gives us $q\leq h(\vec{w})$ such that $s\frown((\vec{w},f^s(\vec{w}),q)) \in X$. Now
		$$q \leq h_i(\vec{w}) = b(H_i, B_i)(\vec{w}) = \bigvee_{\pi(\vec{w}^*)=\vec{w}, \vec{w}^*\in B_i} H_i(\vec{w}^*)$$
		so there must be some $\vec{w}^* \in B_i$ with $\pi(\vec{w}^*)=\vec{w}$ such that $q \parallel H_i(\vec{w}^*)$. But $\vec{w}\in\dom h'_i$ so $\vec{w}^*\in C_i\subseteq C^s_i\downharpoonright s$; and $X$ is downwards closed so we cannot have been in the first case when we defined $C^s_i$, and must therefore be in the second case.

		We wish to show that (ii) holds, so we are given some $\vec{w}\in \dom h'_i$ and $r \in \mathbb{B}(\vec{w},\vec{u})$ below $h'_i(\vec{w})$. By similar reasoning we have that $r$ is compatible with $H'_i(\vec{w}^*)$ for some $\vec{w}^* \in C_i$ with $\pi(\vec{w}^*)=\vec{w}$. By the definition of $C^s_i$ we know that $s\frown ((\vec{w},f^s(\vec{w}),H'_i(\vec{w}^*)))\in X$ so it is possible to take $q\leq r$ with $s\frown ((\vec{w},f^s(\vec{w}),q))\in X$.
	\end{proof}

	For each lower part $s$ and each $i<\lambda$ that falls into case (ii) of the claim, and for each $\vec{w}\in \dom h'_i$ we have a dense open set of $q \in \mathbb{B}(\vec{w},\vec{u})$ such that $s\frown ((\vec{w},f^s(\vec{w}),q))\in X$, and we take a maximal antichain contained in both this set and $\mathbb{C}(\vec{w},\vec{u})$. The $\kappa$-chain condition of the forcing tells us that this antichain is bounded, which is to say there is some $\eta_{s, i, \vec{w}} < \kappa$ with the antichain contained in $\mathbb{C}(\vec{w},\eta_{s, i, \vec{w}})$. We now refine $\dom h'$ to contain only $\vec{x}$ such that $\kappa(\vec{x})$ is a closure point of the function $(s, i, \vec{w}) \mapsto \eta_{s, i, \vec{w}}$ and immediately have the following strengthening of the claim.

	For all lower parts $s$ and $i < \lambda$ we have one of:
	\begin{enumerate}[(i)]
		\item For all $\vec{w}\in \dom h'_i$ there do not exist $e$ and $q\leq h'(\vec{w})$ such that $s\frown((\vec{w},e,q)) \in X$.
		\item For all $\vec{w}\in \dom h'_i$ and $\vec{x}\in\dom h'$ such that $\kappa(\vec{w})<\kappa(\vec{x})$ there are densely many $q$ in $\mathbb{B}(\vec{w},\vec{x})$ below $h'_i(\vec{w})$ such that $s\frown((\vec{w},f^s(\vec{w}),q)) \in X$.
	\end{enumerate}

	To conclude the proof we will need to make further reductions of the domains of the $h'_i$. For each lower part $s$ and each $i<k<\lambda$ define $d^s_{i,k}$ to be the function $j(f^s)(\vec{u}\upharpoonright k)$ restricted to lower parts of length $i$. We observe that $j(f^s)(\vec{u}\upharpoonright k)$ is an upper part for $\mathbb{R}_{\vec{u}\upharpoonright k}$ so $d^s_{i,k}$ will have domain in $u_i$ and is a partial function from $V_{\kappa}$ to $V_{\kappa}$. Thus for any $\vec{v}$ in its domain we have $j(d^s_{i,k})(\vec{v}) = j(d^s_{i,k})(j(\vec{v})) = j(d^s_{i,k}(\vec{v}))=d^s_{i,k}(\vec{v})$, giving us
	\begin{align*}
		& \forall\vec{v}\in\dom d^s_{i,k}: j(d^s_{i,k})(\vec{v}) = d^s_{i,k}(\vec{v}) = j(f^s)(\vec{u}\upharpoonright k)(\vec{v}) \\
		\Rightarrow& \forall_{u_k}\vec{w}: \forall\vec{v}\in\dom d^s_{i,k}\cap V_{\kappa(\vec{w})}: d^s_{i,k}(\vec{v}) = f^s(\vec{w})(\vec{v})
	\end{align*}
	Call this $u_k$-large set $X^s_{i,k}$ and take $h''^s\leq h'$ such that for all $j<\lambda$ we have $\dom h''^s_j\subseteq (\bigcap_{k>j}\dom d^s_{j,k})\cap(\bigcap_{i<j}X^s_{i, j})$. Also ensure $h''^s_i\leq j(f^s)(\vec{u}\upharpoonright k)$ for all $i<k<\lambda$; this is possible since all the functions involved are members of the $\kappa$-complete filter $\mathcal{F}_i$. Then by normality take $h''$ such that $h''\downharpoonright s\leq h''^s$ for all $s$. For any lower part $s$, $\vec{w}\in\dom h''$ and $\vec{v}\in\dom h''\upharpoonright\vec{w}$ above $s$ this gives
	$$h''(\vec{v})\leq h''^s(\vec{v})\leq	j(f^s)(\vec{u}\upharpoonright (\lh\vec{w}))(\vec{v})=f^s(\vec{w})(\vec{v}).$$
	Hence $h''\upharpoonright\vec{w}\leq f^s(\vec{w})$ and since $X$ is $^*$-open we get $s\frown((\vec{w},h''\upharpoonright\vec{w},q))\in X$ for densely-many $q$ as required.
\end{proof}

This lemma is valuable because it allows us to express the crucial properties of $\vec{u}$ solely in terms of subsets of $V_{\kappa}$, rather than large supercompactness embeddings. When we perform the forcing iteration it will be possible to reflect these properties from the $\vec{u}$ that occurs at the end of the iteration to the $\vec{u}$ at earlier stages.

\section{Properties of the Radin forcing $\mathbb{R}_{\vec{u}}$} \label{properties_of_radin}

\subsection{The Prikry property}

\begin{proposition}
	Let $\vec{u}\in\mathcal{U}$. Then $\mathbb{R}_{\vec{u}}$ has the Prikry property.
\end{proposition}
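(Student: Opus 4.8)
The plan is to prove, by induction on $\kappa(\vec{u})$, that for every condition $p$ of $\mathbb{R}_{\vec{u}}$ and every sentence $\varphi$ of the forcing language there is a direct extension $q\leq^{*}p$ with $q\parallel\varphi$. Below a condition $p=((\vec{w}_0,e_0,q_0),\dots,(\vec{w}_{n-1},e_{n-1},q_{n-1}),(\vec{u},h))$ the forcing splits into an interval part for each gap $(\kappa(\vec{w}_k),\kappa(\vec{w}_{k+1}))$ — each a copy of a Radin forcing $\mathbb{R}_{\vec{w}}$ with $\kappa(\vec{w})<\kappa(\vec{u})$ together with its interleaved collapse — and the part of $\mathbb{R}_{\vec{u}}$ below $((\vec{u},h))$. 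Since the $u_i$ concentrate on $\mathcal{U}$ we may first pass to a direct extension in which all ultrafilter sequences arising in these lower intervals lie in $\mathcal{U}$, so the induction hypothesis supplies the Prikry property there, and a routine amalgamation reduces the problem to the case $p=((\vec{u},h))$. So fix $h$ and $\varphi$; the aim is an upper part $h^{*}\leq h$ with $((\vec{u},h^{*}))\parallel\varphi$.

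First I would set up the sets to which the capturing property is applied. Let $X^{+}$ (respectively $X^{-}$) be the collection of lower parts $r$ for which there is an upper part $g\leq h$ making $r\frown((\vec{u},g))$ a condition that forces $\varphi$ (respectively $\neg\varphi$). Because $\leq^{*}$-strengthening a lower part preserves the forcing relation, $X^{+}$ and $X^{-}$ are $^{*}$-open, so the capturing property of $\mathcal{U}$ can be invoked: apply it to $h$ and $X^{+}$ to obtain $h_1\leq h$, then to $h_1$ and $X^{-}$ to obtain $h^{*}:=h_2\leq h_1$. The dichotomy for $X^{+}$ persists under the shrink from $h_1$ to $h^{*}$, since in alternative (ii) a set of collapse conditions dense below $(h_1)_i(\vec{w})$ stays dense below the smaller value $(h^{*})_i(\vec{w})$, and the clause ``for all $\vec{x}\in\dom h^{*}$'' only weakens as $\dom h^{*}$ shrinks. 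As at the end of the proof of the capturing property, I would then refine $\dom h^{*}$ so that every $\kappa(\vec{x})$ with $\vec{x}\in\dom h^{*}$ is a closure point of the map taking a pair (lower part $s$, sequence $\vec{w}\in\dom h^{*}$) to a bound — available by the $\kappa(\vec{u})$-chain condition of the interleaved collapse — on a maximal antichain inside $\mathbb{C}(\vec{w},\vec{u})$ witnessing the density in alternative (ii); after this refinement that density is realised already inside $\mathbb{C}(\vec{w},\vec{x})$ for $\vec{x}$ the next point. Finally shrink each $\dom h^{*}_i$ to consist of sequences from $\mathcal{U}$.

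The core of the argument is then to show $((\vec{u},h^{*}))\parallel\varphi$. Assume otherwise and fix $q^{+}\leq((\vec{u},h^{*}))$ forcing $\varphi$ and $q^{-}\leq((\vec{u},h^{*}))$ forcing $\neg\varphi$. By the induction hypothesis, applied to the micro-Radin forcings $\mathbb{R}_{\vec{v}}$ for the sequences $\vec{v}$ appearing in the stems of $q^{\pm}$ (these have $\vec{v}\in\mathcal{U}$ and $\kappa(\vec{v})<\kappa(\vec{u})$), we may direct-extend $q^{\pm}$ so that the structure strictly below each added point is already resolved; what then survives is, in each case, a finite increasing sequence of points from $\dom h^{*}$ with a collapse condition attached at each. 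Running down the stem of $q^{+}$ and invoking the $X^{+}$-dichotomy at each point's length — alternative (i) being excluded because $q^{+}$ itself realises $X^{+}$ — we land at every stage in alternative (ii), which says the point may be re-added with the maximal micro-part $h^{*}\upharpoonright\vec{v}$ and a dense open set of collapse conditions (sitting, by the closure-point refinement, inside the collapse factor $\mathbb{C}(\vec{v},\vec{v}')$ below the next point) while still realising $X^{+}$; symmetrically for $q^{-}$ and $X^{-}$. Using normality (clause (4) of the definition of $\mathcal{U}'$) to homogenise over the choice of points, and using the $\kappa(\vec{v})^{+5}$-closure of the collapse factors to meet the $X^{+}$- and $X^{-}$-dense sets simultaneously at each point, one builds a single finite stem $r^{*}$ over the trivial stem that at once witnesses $r^{*}\in X^{+}$ and $r^{*}\in X^{-}$. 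Then there are upper parts $g^{+},g^{-}\leq h^{*}$ with $r^{*}\frown((\vec{u},g^{+}))$ forcing $\varphi$ and $r^{*}\frown((\vec{u},g^{-}))$ forcing $\neg\varphi$; but any two upper parts have a common lower bound among upper parts (clause (3) of $\mathcal{U}'$: the Boolean values they represent form a filter), so $r^{*}\frown((\vec{u},g^{+}\wedge g^{-}))$ — the pointwise meet, shrunk as usual to an upper part — forces $\varphi\wedge\neg\varphi$, a contradiction. Hence $((\vec{u},h^{*}))\parallel\varphi$, and the induction is complete.

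The step I expect to be the main obstacle is the construction, in the third paragraph, of the single stem $r^{*}$ realising both $X^{+}$ and $X^{-}$: one must combine the finite paths of points-with-collapse-conditions coming from $q^{+}$ and $q^{-}$ into one, keeping track of the (unordered) lengths of the points and ensuring that each case-(ii) density demand is still met after the other side's points have been interleaved in. This is exactly the configuration the construction was tailored to: the interleaved poset is a L\'evy collapse, so its high closure lets us satisfy several dense sets at a shared point, while its chain condition bounds the relevant antichains and so licenses the closure-point refinement that confines each density demand to the collapse factor below the next point — which is also why the capturing property is phrased with the next point $\vec{x}$ made explicit. Everything else is bookkeeping, chiefly threading the induction on $\kappa(\vec{u})$ through the nested micro-Radin structures, where one uses that $\mathcal{U}$ is closed under the restrictions on which the $u_i$ concentrate.
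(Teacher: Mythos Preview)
Your proposal diverges from the paper's proof in its overall architecture and contains a genuine gap at the core step.

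The paper does not induct on $\kappa(\vec{u})$ or reduce to the trivial-stem case. It treats the full condition $p=((\vec{w}_0,h_0,p_0),\dots,(\vec{w}_{n-1},h_{n-1},p_{n-1}),(\vec{u},h_n))$ directly, and crucially it uses normality \emph{before} capturing: for each $k\le n$ it builds $p'$ with the property that for every lower part $s$ in $\mathbb{R}_{\vec{w}_k}$, if some direct extension of $p'\downharpoonright k$ decides $\varphi$ when prepended by $s$, then already $s\frown p'\downharpoonright k$ does. This lets it define $X^+_k=\{s:s\frown p'\downharpoonright k\forces\varphi\}$ with a \emph{fixed} upper part. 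After capturing each $X^{\pm}_k$ to obtain $p''$, the contradiction is obtained by a minimal-length argument: take $t\le p''$ of shortest stem deciding $\varphi$, isolate its top new triple $(\vec{v},e,q)$, and show by density that dropping this triple still yields a condition forcing $\varphi$. The density argument works because case~(ii) holds for the truncated stem $r$ and $\epsilon=\lh\vec{v}$, and because membership in $X^+_k$ directly gives a forcing statement with the fixed upper part $p'\downharpoonright k$.

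Your gap is in the ``running down the stem'' step. You assert that for each point $\vec{v}_l$ in the stem of $q^+$, alternative~(i) of the dichotomy is excluded for the pair $(r_l,\lh\vec{v}_l)$, where $r_l$ is the stem strictly below $\vec{v}_l$. But excluding (i) requires some $\vec{w},e,q$ with $r_l\frown((\vec{w},e,q))\in X^+$; all you know is that the \emph{full} stem $s^+$ lies in $X^+$, and for $l$ below the top the truncation $r_l\frown((\vec{v}_l,d_l,p_l))$ need not be in $X^+$---the remaining points of $s^+$ above $\vec{v}_l$ may be essential for the forcing. So the exclusion of (i) is only justified at the top level. More fundamentally, your existential definition of $X^+$ (``there exists $g\le h$'') is too weak for any density argument of the kind needed: when case~(ii) yields $r\frown((\vec{w},h^*\upharpoonright\vec{w},q))\in X^+$, the witnessing upper part $g'$ bears no relation to the upper part of the arbitrary extension you are trying to refine, so you cannot push that extension below a condition forcing $\varphi$. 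The paper's normality step---absorbing the existential into a fixed upper part before defining $X^\pm$---is exactly what makes this work, and it is missing from your outline. The attempted merger of $q^+$ and $q^-$ into a single stem $r^*$ then inherits both problems and, as you yourself anticipate, does not go through as described.
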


\begin{proof}
	We are given some condition
	$$p = ((\vec{w}_0,h_0,p_0),...,(\vec{w}_{n-1},h_{n-1},p_{n-1}),(\vec{u},h_n))$$
	from $\mathbb{R}_{\vec{u}}$ and a proposition $\varphi$, and wish to find a direct extension of $p$ that forces either $\varphi$ or $\neg\varphi$. For notational convenience we will deem $\vec{w}_n$ to be $\vec{u}$.

	We define a descending sequence of $p^k$ by induction on $k\leq n$; starting with $p^0\leq^* p$ such that $p^0\parallel\phi$ if possible, or else $p^0:=p$. Given $p^{k-1}$, for each lower part $s\leq p^{k-1}\upharpoonright k$ for $\mathbb{R}_{\vec{w}_k}$, if possible take $((\vec{w}_k,h^s_k,p^s_k))\frown y^s_k \leq^* p^{k-1}\downharpoonright k$ such that $s\frown((\vec{w}_k,h^s_k,p^s_k))\frown y^s_k\parallel\phi$. Then by normality we can form $h'_k\leq h^s_k\downharpoonright s$ for all lower parts $s$, and by closure we can form $p'_k\leq p^s_k$ and $y'_k\leq y^s_k$ for all such $s$. Define $p^k := p^{k-1}\upharpoonright k\frown((\vec{w}_k,h'_k,p'_k))\frown y'_k$. The construction concludes with $p':=p^n$, so
	$$p' = ((\vec{w}_0,h'_0,p'_0),...,(\vec{w}_{n-1},h'_{n-1},p'_{n-1}),(\vec{w}_n,h'_n))$$
	such that for all $k\leq n$ and all lower parts $s \in \mathbb{R}_{\vec{w}_k}$, if there is some direct extension $t$ of $p'\downharpoonright k$ with $s\frown t\parallel\varphi$ then already $s\frown p'\downharpoonright k\parallel\varphi$. Then for each $k\leq n$ define $X^+_k$ to be the set of lower parts $s$ in $\mathbb{R}_{\vec{w}_k}$ such that $s\frown p'\downharpoonright k\forces\varphi$. Similarly define $X^-_k$ with $\neg\varphi$ in place of $\varphi$. Take $h''_k \leq h'_k$ that captures both $X^+_k$ and $X^-_k$.

	We claim that $$p'':=((\vec{w}_0,h''_0,p'_0),...,(\vec{w}_{n-1},h''_{n-1},p'_{n-1}),(\vec{w}_n,h''_n))$$
	decides $\varphi$. Suppose this is not so, and take $t$ of minimal length below $p''$ that decides $\varphi$; without loss of generality we can assume $t\forces\varphi$. Fix $k\leq n$ such that the largest new triple appearing in $t$ lies between $\vec{w}_{k-1}$ and $\vec{w}_k$. Call this triple $(\vec{v},e,q)$ and split $t$ as $r\frown((\vec{v},e,q))\frown s$. Observe that by the construction we actually have $r\frown((\vec{v},e,q))\frown p''\downharpoonright k \forces \varphi$. Observe further that the existence of such a $(\vec{v},e,q)$ tells us that $r$ and $\epsilon:=\lh\vec{v}$ fall into case (ii) of the capture of $X^+_k$. We will show by density that in fact $r\frown p''\downharpoonright k\forces\varphi$, which will contradict the minimality of the length of $t$ and conclude the proof.

	We are given some extension of $r\frown p''\downharpoonright k$, say of the form
	$$r'\frown ((\vec{v}_0,e_0,q_0),...,(\vec{v}_{m-1},e_{m-1},q_{m-1}))\frown y$$
	where $\kappa(\max r') = \kappa(\max r)$ and $\kappa(\min y) = \kappa(\vec{w}_k)$, and we seek an extension that forces $\varphi$. Fix $j$ such that $\lh\vec{v}_j = \epsilon$ and $\lh\vec{v}_i < \epsilon$ for all $i<j$; if there is no such $\vec{v}_j$ then we can easily insert one. We have $\vec{v}_j \in \dom h''_k$ and $q_j\leq h''_k(\vec{v}_j)$, and case (ii) of the capturing of $X^+_k$ occurs for $r$ and $\epsilon$, so we can find $q^*\leq q_j$ such that $r\frown((\vec{v}_j,h''_k\upharpoonright\vec{v}_j,q^*))\in X^+_k$, which is to say
	$$r\frown((\vec{v}_j,h''_k\upharpoonright\vec{v}_j,q^*))\frown p'\downharpoonright k\forces\varphi.$$
	For $i<j$ the fact that $(\vec{v}_i,e_i,q_i)$ could be added below $(\vec{w}_k,h''_k)$ shows us that it can also be added below $(\vec{v}_j,h''_k\upharpoonright\vec{v}_j)$. This establishes that
	$$r'\frown ((\vec{v}_0,e_0,q_0),... (\vec{v}_j,e_j\wedge h''_k\upharpoonright\vec{v}_j, q^*) ,...,(\vec{v}_{m-1},e_{m-1},q_{m-1}))\frown y$$
	is below $r\frown((\vec{v}_j,h''_k\upharpoonright\vec{v}_j,q^*))\frown p'\downharpoonright k$ and hence forces $\varphi$, and it is also an extension of $r'\frown ((\vec{v}_0,e_0,q_0),...,(\vec{v}_{m-1},e_{m-1},q_{m-1}))\frown y$ as required.
\end{proof}

We can use this result to show that $\mathbb{R}_{\vec{u}}$ preserves enough cardinals.

\begin{proposition} \label{preserveCardinals}
	\begin{enumerate}[(a)]
		\item Let $\vec{u}\in\mathcal{U}$ and $\langle\vec{w}_{\alpha},g_{\alpha}\mid\alpha<\theta\rangle$ the generic sequence of ultrafilter sequences and collapses added by $\mathbb{R}_{\vec{u}}$.

		Then for $\alpha<\theta$, $\mathbb{R}_{\vec{u}}$ preserves the cardinals in $[\kappa(\vec{w}_{\alpha}),\kappa(\vec{w}_{\alpha})^{+5}]$.
		\item If we force below $((\vec{u},h))$ such that $\dom h$ contains only sequences of length less than $\lh\vec{u}$ then $\kappa$ becomes $\aleph_{\lh\vec{u}}$.
	\end{enumerate}
\end{proposition}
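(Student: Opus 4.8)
The plan is to prove (a) and (b) together by induction on the ordinal $\kappa(\vec{u})$, the central tool being a factorisation of $\mathbb{R}_{\vec{u}}$ at a point of its generic sequence. Every point $\vec{w}_{\alpha}$ of the generic sequence already occurs in some condition of the generic filter, so for (a) it suffices to fix $\vec{v}\in\mathcal{U}$ with $\mu:=\kappa(\vec{v})<\kappa(\vec{u})$ and a condition $p$ in which $\vec{v}$ occurs as one of the $\vec{w}_k$, and to show that $\mathbb{R}_{\vec{u}}\upharpoonright p$ preserves the cardinals in $[\mu,\mu^{+5}]$. Splitting $p$ at the triple $(\vec{v},e_k,q_k)$ it involves, the conditions of $\mathbb{R}_{\vec{u}}$ below $p$ separate into a part living entirely in $V_{\mu}$ and a part lying above $\vec{v}$; one checks that these may be extended independently, so that $\mathbb{R}_{\vec{u}}\upharpoonright p\cong(\mathbb{R}_{\vec{v}}\upharpoonright p_0)\times\mathbb{Q}$ with mutual genericity, where $p_0$ is the $\mathbb{R}_{\vec{v}}$-condition coming from the part of $p$ at or below $\vec{v}$ (with $e_k\in\mathcal{F}_{\vec{v}}$ serving as its upper part) and $\mathbb{Q}$ is the ``tail'' of conditions of $\mathbb{R}_{\vec{u}}$ whose least non-trivial ultrafilter sequence is $\vec{v}$ and that agree with $p$ below $\vec{v}$. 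The forcing $\mathbb{R}_{\vec{v}}\upharpoonright p_0$ is contained in $V_{\mu}$ and hence has size at most $\mu$; every collapse appearing in a condition of $\mathbb{Q}$ is a regular open algebra of a copy of $\Coll(\mu^{+5},<\cdot)$, and the upper parts in $\mathbb{Q}$ come from the $\kappa(\vec{u})$-complete filters of $\vec{u}$, so $(\mathbb{Q},\leq^*)$ is $\mu^{+5}$-closed; and $\mathbb{Q}$ has the Prikry property, by the argument of the previous proposition applied with $\vec{v}$ in place of $\langle\omega\rangle$ as the base of the lower parts.

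Granting the factorisation, (a) follows by forcing the two factors in turn. Forcing $\mathbb{Q}$ first, the Prikry property together with the $\mu^{+5}$-closure of $\leq^*$ shows that $\mathbb{Q}$ adds no new $\gamma$-sequence of ordinals for any $\gamma<\mu^{+5}$; hence it preserves every cardinal $\leq\mu^{+5}$ and leaves $\mathcal{P}(V_{\mu})$ unchanged, so in the intermediate model $\vec{v}$ still lies in $\mathcal{U}$ and $\mathbb{R}_{\vec{v}}\upharpoonright p_0$ is still the same forcing of size at most $\mu$. Forcing $\mathbb{R}_{\vec{v}}\upharpoonright p_0$ over the intermediate model then preserves every cardinal $\geq\mu^+$, in particular $\mu^+,\dots,\mu^{+5}$; and by the induction hypothesis at $\vec{v}$ --- a theorem of $\mathrm{ZFC}$, hence usable inside the intermediate model --- the cardinal $\mu$ itself is preserved: trivially if $\vec{v}=\langle\omega\rangle$, and otherwise because part (a) at the level of $\vec{v}$ keeps the points of the $\mathbb{R}_{\vec{v}}$-generic sequence cardinals and these are cofinal in $\mu$. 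This establishes (a); in particular it shows $\mathbb{R}_{\vec{u}}$ preserves $\kappa(\vec{u})$, which is the supremum of the $\kappa(\vec{w}_{\alpha})$.

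For (b), recall that when $\dom h$ contains only sequences of length $<\lambda$ the generic sequence has the form $\langle\vec{w}_{\alpha},g_{\alpha}\mid\alpha<\omega^{\lambda}\rangle$ with $\omega^{\lambda}=\lambda$, is continuous at limit stages, has $\kappa(\vec{w}_0)=\omega$, and has $\langle\kappa(\vec{w}_{\alpha})\mid\alpha<\lambda\rangle$ cofinal in $\kappa$. By part (a) each $\kappa(\vec{w}_{\alpha})$ is preserved along with all of $[\kappa(\vec{w}_{\alpha}),\kappa(\vec{w}_{\alpha})^{+5}]$, while $g_{\alpha}$, generic for $\Coll(\kappa(\vec{w}_{\alpha})^{+5},<\kappa(\vec{w}_{\alpha+1}))$, collapses every cardinal in $(\kappa(\vec{w}_{\alpha})^{+5},\kappa(\vec{w}_{\alpha+1}))$ and makes $\kappa(\vec{w}_{\alpha+1})$ equal to $\kappa(\vec{w}_{\alpha})^{+6}$. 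A short induction then gives $\kappa(\vec{w}_{\alpha})=\aleph_{6\cdot\alpha}$ in the extension, and since $\lambda$ is a regular uncountable cardinal $\sup_{\alpha<\lambda}6\cdot\alpha=\lambda$; therefore $\kappa=\sup_{\alpha<\lambda}\kappa(\vec{w}_{\alpha})=\aleph_{\lambda}$.

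I expect the first paragraph to be the real work: making the factorisation precise --- in particular checking that the part of a condition below $\vec{v}$ and the tail above it are genuinely mutually generic and together recover $\mathbb{R}_{\vec{u}}\upharpoonright p$ --- and checking that $\mathbb{Q}$ inherits the Prikry property, which amounts to re-running the capturing argument of the previous proposition over the tail forcing. After that the remaining work is bookkeeping with closure, chain conditions and ordinal arithmetic.
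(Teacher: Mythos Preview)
Your proposal is correct and follows essentially the same approach as the paper: induction on $\kappa(\vec{u})$, factoring $\mathbb{R}_{\vec{u}}$ below a condition containing $\vec{v}$ as $\mathbb{R}_{\vec{v}}\times\mathbb{Q}$ with $\mathbb{Q}$ an $\mathbb{R}_{\vec{u}}$-variant whose first collapse starts at $\mu^{+5}$, using the $\mu^+$-cc of the first factor and the Prikry property plus $\leq^*$-closure of the second. Your treatment is in fact slightly more careful than the paper's in two places---you make explicit that $\mathbb{Q}$ adds no new subsets of $V_\mu$ so the induction hypothesis applies unchanged in the intermediate model, and in (b) you spell out the ordinal arithmetic $\kappa(\vec{w}_\alpha)=\aleph_{6\cdot\alpha}$ rather than just counting surviving cardinals---but these are refinements of the same argument, not a different route.
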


\begin{proof}
	\begin{enumerate}[(a)]
		\item Our proof is by induction on $\kappa(\vec{u})$. Given $\alpha<\theta$ take a condition $p$ in the generic filter of the form $p_1\frown((\vec{w}_{\alpha},e,q))\frown p_2$. Below $p$, $\mathbb{R}_{\vec{u}}$ splits as
		$$\mathbb{R}_{\vec{w}_{\alpha}}/p_1\frown((\vec{w}_{\alpha},e)) \:\:\times\:\: \mathbb{R}'_{\vec{u}}/((\langle\kappa(\vec{w}_{\alpha})\rangle,\phi,q))\frown p_2$$
		where $\mathbb{R}'_{\vec{u}}$ is the same as $\mathbb{R}_{\vec{u}}$ except with its first collapse starting from $\kappa(\vec{w}_{\alpha})^{+5}$ instead of $\omega^{+5}$. By hypothesis the first of these forcings preserves many cardinals below $\kappa(\vec{w}_{\alpha})$ and hence $\kappa(\vec{w}_{\alpha})$ itself; since it has the $\kappa(\vec{w}_{\alpha})^+$-cc it also preserves all larger cardinals. The second forcing has the Prikry property by a proof identical to the one above, and it is $\kappa(\vec{w}_{\alpha})^{+5}$-closed in the $\leq^*$-ordering, so it will preserve all remaining cardinals up to and including $\kappa(\vec{w}_{\alpha})^{+5}$.
		\item Here we have $\theta=\omega^{\lh\vec{u}}=\lh\vec{u}$. By part (a) we have that cardinals in $[\kappa(\vec{w}_{\alpha}),\kappa(\vec{w}_{\alpha})^{+5}]$ are preserved, and it is clear that all other cardinals below $\kappa(\vec{u})$ are collapsed. So the forcing leaves $\lh\vec{u}$-many cardinals below $\kappa$.
	\end{enumerate}
\end{proof}

\subsection{Analysis of names}

Next we prove a technical lemma allowing us to replace $\mathbb{R}_{\vec{u}}$-names with names in smaller forcings; it will be useful to us in Lemma \ref{Qstrong} when we need to establish tight control over such names.

\begin{lemma} \label{nameAnalysis}
	Let $\vec{u}\in\mathcal{U}$, $\dot{x}$ a Boolean $\mathbb{R}_{\vec{u}}$-name (i.e. a name for a single true/false value), and $s\frown((\vec{u},h))\in\mathbb{R}_{\vec{u}}$.

	Then there is an ordinal $\beta<\kappa$, an upper part $h'\leq h$, and a $\mathbb{R}_{\max s}\times\mathbb{B}(\max s, \beta)$-name $\dot{y}$ such that $\dom h'$ lies above $\beta$ (so below $((\vec{u},h'))$ this $\dot{y}$ can be regarded as a $\mathbb{R}_{\vec{u}}$-name) and such that $s\frown((\vec{u},h'))\forces \dot{x}=\dot{y}$.
\end{lemma}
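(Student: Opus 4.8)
The goal is to compress an arbitrary Boolean name $\dot x$ into a name depending only on $\mathbb{R}_{\max s}\times\mathbb{B}(\max s,\beta)$ for some $\beta<\kappa$, after passing to a direct extension $s\frown((\vec u,h'))$. The natural strategy is to combine the Prikry property, the analysis of generic sequences, and the chain condition in the spirit of the ``capturing'' arguments already used for $\mathbb{R}_{\vec u}$. First I would fix attention on the maximal condition $s\frown((\vec u,h))$. By the Prikry property (the preceding proposition), for every lower part $t$ extending $\max s$ within $\mathbb{R}_{\vec u}$ the question ``does $t\frown((\vec u,h_t))\Vdash\dot x$ for some direct extension $h_t\le h$?'' can be normalised: using normality (clause (4) of $\mathcal U'$) I can find a single $h^\flat\le h$ such that for every lower part $t$ above $\max s$, if some direct extension of $t\frown((\vec u,h^\flat))$ decides $\dot x$ then $t\frown((\vec u,h^\flat))$ itself already decides it. This is the standard ``diagonalise over lower parts and shrink the upper part'' move, and it is legitimate because the set of lower parts above $\max s$ has size $<\kappa$ and $\mathcal F_i$ is $\bar\kappa$-complete.

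**Reducing to a bounded part of the collapse.** Once the decision about $\dot x$ is localised to the lower part, I would apply the capturing property (clause (6), available since $\vec u\in\mathcal U$) to the $^*$-open sets $X^+=\{t : t\frown((\vec u,h^\flat))\Vdash\dot x\}$ and $X^-=\{t : t\frown((\vec u,h^\flat))\Vdash\neg\dot x\}$ of lower parts; this gives $h^{\flat\flat}\le h^\flat$ such that for each $i<\lh\vec u$ and each lower part $t$ we land in case (i) or case (ii) of the dichotomy. In case (ii) the set of collapse-conditions $q\in\mathbb{B}(\vec w,\vec x)$ witnessing membership in $X^{\pm}$ is dense below $h^{\flat\flat}_i(\vec w)$, and since $\mathbb{C}(\vec w,\vec x)$ is dense in $\mathbb{B}(\vec w,\vec x)$ and has the $\kappa$-cc, any maximal antichain inside this dense set is bounded below some $\eta<\kappa$ — exactly the boundedness argument used in the proof of the Capturing Proposition. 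I would take $h'$ to be a further shrinking of $h^{\flat\flat}$ whose domain consists only of sequences $\vec x$ with $\kappa(\vec x)$ above all the finitely-many-per-$(t,i,\vec w)$ bounds $\eta$ that arise (there are $<\kappa$ lower parts $t$ and $<\kappa$ many $\vec w$, and clause (4) closure lets me intersect), and set $\beta$ to be the supremum of those bounds, or rather $\beta:=\kappa(\min\dom h')$; then $\dom h'$ lies above $\beta$ as required, and below $((\vec u,h'))$ the decision about $\dot x$ depends only on: which lower part $t$ above $\max s$ is realised (equivalently, the $\mathbb{R}_{\max s}$-generic together with the finitely many ultrafilter sequences/collapses appearing below $\kappa(\min\dom h')$), and the bounded collapse information in $\mathbb{B}(\max s,\beta)$.

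**Extracting the name $\dot y$.** From the generic sequence analysis in Section~\ref{defineR}, forcing below $s\frown((\vec u,h'))$ factors: the part of the generic living strictly below $\kappa(\min\dom h')$ is captured by $\mathbb{R}_{\max s}\times\mathbb{B}(\max s,\beta)$ (this uses that $\mathbb{C}(\max s,\beta)\subseteq V_\beta$ has the $\beta$-cc, and that any new triple $(\vec v,e,q)$ forced into the generic with $\kappa(\vec v)<\kappa(\min\dom h')$ must have $\vec v\in\dom h'$, contradicting $\dom h'$ lying above $\beta$ unless $\kappa(\vec v)<\beta$, i.e. it is part of $\mathbb{B}(\max s,\beta)$), while the part above $\kappa(\min\dom h')$ is now irrelevant to $\dot x$ by the previous paragraph. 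Concretely I would define $\dot y$ to be the canonical $\mathbb{R}_{\max s}\times\mathbb{B}(\max s,\beta)$-name that reads off, from a generic $(G_0,G_1)$ for that product, the unique truth value such that $(\text{lower part determined by }G_0,G_1)\frown((\vec u,h'))$ decides $\dot x$ that way — well-defined precisely because of the Prikry-normalisation of $h^\flat$ and the case-(i)/(ii) dichotomy of $h'$. Then $s\frown((\vec u,h'))\Vdash\dot x=\dot y$ by genericity: any extension deciding $\dot x$ can, by the density argument of the Capturing proof (inserting a triple of the right length $\epsilon$ and using $^*$-openness), be pushed to agree with the value dictated by its own bounded initial segment.

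**Main obstacle.** The delicate point is the bookkeeping in the second paragraph: making sure that the shrinking of $\dom h'$ to lie above a single $\beta<\kappa$ simultaneously handles \emph{all} lower parts $t$ above $\max s$, all $i<\lh\vec u$, and — crucially — the possibility that extensions introduce \emph{new} ultrafilter sequences $\vec v$ with $\beta\le\kappa(\vec v)<\kappa(\min\dom h')$. The clean way around this is to note $\beta$ should be chosen so that $\dom h'$ has no sequence with $\kappa$-value in $[\beta,\beta)$ trivially, i.e. one fixes $\beta$ \emph{first} as a closure point above all the $\eta$-bounds and the parameters of $s,h,\dot x$, then shrinks $\dom h'$ to lie strictly above $\beta$; any triple addable below $((\vec u,h'))$ with $\kappa$-value $<\kappa(\min\dom h')$ then in fact has $\kappa$-value $<\beta$ and so is genuinely a $\mathbb{B}(\max s,\beta)$-condition, since a triple addable below $((\vec u,h'))$ must have its sequence in $\dom h'$ — this last observation is what forces the factorisation to be exactly $\mathbb{R}_{\max s}\times\mathbb{B}(\max s,\beta)$ with nothing in between. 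Verifying that a single $\beta$ works requires the regularity of $\kappa$ together with the $<\kappa$-sized index sets, which is where the inaccessibility of $\kappa(\vec u)$ and the $\kappa$-cc of the collapses do their work.
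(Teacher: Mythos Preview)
Your approach diverges from the paper's and contains a genuine gap. The paper does \emph{not} capture $X^+$ and $X^-$ directly. Instead it first normalises and captures, for every candidate bounded name $\dot y$, the set $X_{\dot y}=\{t:t\frown((\vec u,h_{\dot y}))\Vdash\dot x=\dot y\}$; then it picks a length-$0$ sequence $\vec w\in\dom h'$, uses the clean product factorisation
\[
\mathbb{R}_{\vec u}\big/ s\frown((\vec w,\phi,0),(\vec u,h'))\;\cong\;\mathbb{R}_{\vec w}\big/s\frown((\vec w,\phi))\;\times\;\mathbb{R}'_{\vec u}\big/((\vec w,\phi,0),(\vec u,h'\downharpoonright\kappa(\vec w))),
\]
views $\dot x$ as an $\mathbb{R}'_{\vec u}$-name for a $\mathbb{R}_{\vec w}$-name for a Boolean, and applies the Prikry property \emph{together with $\kappa(\vec w)^{+5}$-closure of the $\leq^*$-order on $\mathbb{R}'_{\vec u}$} to find a direct extension of the upper factor that determines this $\mathbb{R}_{\vec w}$-name as some fixed $\dot y$. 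The $\kappa(\vec w)$-cc of $\mathbb{R}_{\vec w}\cong\mathbb{R}_{\max s}\times\mathbb{B}(\max s,\vec w)$ then bounds $\dot y$ in $\mathbb{R}_{\max s}\times\mathbb{B}(\max s,\beta)$ for some $\beta<\kappa(\vec w)$. Finally, because $s\frown((\vec w,\phi,q))\in X_{\dot y}$ and $\vec w\in\dom h'_{\dot y}$, the capturing of $X_{\dot y}$ puts us in case~(ii) for $s$ and length $0$, and the density argument from the Prikry-property proof yields $s\frown((\vec u,h'))\Vdash\dot x=\dot y$.

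Your argument is missing exactly this factorisation step, and the claim ``the part above $\kappa(\min\dom h')$ is now irrelevant to $\dot x$ by the previous paragraph'' is not justified by capturing $X^+$ and $X^-$ alone. Capturing gives a dichotomy about \emph{one-step} extensions of a given lower part; it does not show that two generics below $s\frown((\vec u,h'))$ which agree on the $\mathbb{R}_{\max s}\times\mathbb{B}(\max s,\beta)$-part must assign $\dot x$ the same value. In particular, below $s\frown((\vec u,h'))$ the forcing does \emph{not} factor as $\mathbb{R}_{\max s}\times\mathbb{B}(\max s,\beta)\times(\text{something})$: the first new triple has its $\vec v\in\dom h'$, so $\kappa(\vec v)>\beta$, and the collapse attached to $\max s$ lives in $\mathbb{B}(\max s,\kappa(\vec v))\supsetneq\mathbb{B}(\max s,\beta)$, with $\kappa(\vec v)$ depending on the generic. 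Your ``Main obstacle'' paragraph notices that no new \emph{triples} appear in the gap, but overlooks that the last collapse coordinate of $s$ is unbounded; this is precisely why the paper inserts a \emph{specific} length-$0$ point $\vec w$ to pin down the factorisation before invoking Prikry plus closure on the upper factor. Without that, there is no mechanism in your outline for eliminating the dependence of $\dot x$ on the tail of the generic.
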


\begin{proof}
	For each $\beta>\kappa(\max s)$ and each $\mathbb{R}_{\max s}\times\mathbb{B}(\max s,\beta)$-name $\dot{y}$ use normality to take $h_{\dot{y}}\leq h$ such that for every lower part $t$, if there is some $h^*\leq h$ with $t\frown((\vec{u},h^*))\forces\dot{x}=\dot{y}$ then $t\frown((\vec{u},h_{\dot{y}}))\forces\dot{x}=\dot{y}$. Define $X_{\dot{y}}$ to be the set of lower parts $t$ such that
	$$t\frown((\vec{u},h_{\dot{y}}))\forces\dot{x}=\dot{y}$$
	and take $h'_{\dot{y}}\leq h_{\dot{y}}$ capturing $X_{\dot{y}}$. Then use normality again to get $h'$ such that $h'\downharpoonright (\beta+1)\leq h_{\dot{y}}$ for all $\mathbb{R}_{\max s}\times\mathbb{B}(\max s,\beta)$-names $\dot{y}$.

	Take $\vec{w}\in\dom h'$ above $s$ and of length $0$ (i.e. $\vec{w}=\langle\kappa(\vec{w})\rangle$). We can split $\mathbb{R}_{\vec{u}}$ below $s\frown((\vec{w},\phi,0),(\vec{u},h'))$ as
	$$\mathbb{R}_{\vec{w}}/s\frown((\vec{w},\phi))\times \mathbb{R}'_{\vec{u}}/((\vec{w},\phi, 0),(\vec{u},h'\downharpoonright \kappa(\vec{w})))$$
	where $\mathbb{R}'_{\vec{u}}$ is the usual forcing derived from $u$ except that its first collapse starts from $\kappa(\vec{w})^{+5}$ rather than $\omega^{+5}$. Now we can view $\dot{x}$ as being a $\mathbb{R}'_{\vec{u}}$-name for a $\mathbb{R}_{\vec{w}}$-name. We know that $\mathbb{R}_{\vec{w}}$ has the $\kappa(\vec{w})^+$-cc, so the $\mathbb{R}_{\vec{w}}$-name in question consists of at most $\kappa(\vec{w})$-many pieces of information. This allows us to use the Prikry property and closure of $\mathbb{R}'_{\vec{u}}$ to take a direct extension $((\langle\omega\rangle,\phi,q), (\vec{u},h''))$ of $((\langle\omega\rangle,\phi,0), (\vec{u},h'\downharpoonright \kappa(\vec{w})))$ that determines the value of the $\mathbb{R}_{\vec{w}}$-name, say as $\dot{y}$, a $\mathbb{R}_{\vec{w}}$-name in the ground model. So returning to $\mathbb{R}_{\vec{u}}$ we have
	$$s\frown((\vec{w},\phi,q),(\vec{u},h''))\forces\dot{x}=\dot{y}.$$

	Observe that (since $\lh\vec{w}=0$) $\mathbb{R}_{\vec{w}}$ splits below $s\frown((\vec{w},\phi))$ as $\mathbb{R}_{\max s}/s \times \mathbb{B}(\max s, \vec{w})$; this has the $\kappa(\vec{w})$-cc so $\dot{y}$ is in fact a $\mathbb{R}_{\max s}\times\mathbb{B}(\max s, \beta)$-name for some $\beta<\kappa(\vec{w})$. Now by construction $X_{\dot{y}}$ contains $s\frown((\vec{w},\phi,q))$, and $\vec{w}\in\dom h_{\dot{y}}$, so when we captured $X_{\dot{y}}$ we must have been in case (ii) for $s$ and $0$. We can now use the same argument as in the proof of the Prikry condition to show that any extension of $s$ must be extensible to some condition in $X_{\dot{y}}$, so we have that $s\frown((\vec{u},h'))\forces\dot{x}=\dot{y}$ as required.
\end{proof}

\subsection{Characterisation of genericity}

Finally we look for a way to characterise genericity that will allow us to take generic sequences for one Radin forcing and show that they are also generic for other Radin forcings. This characterisation develops similar ideas for simpler forcings found in \cite{Mathias} and \cite{Mitchell}.

\definition
Let $\vec{u}\in\mathcal{U}$. A sequence $\langle \vec{w}_{\alpha}, g_{\alpha} \mid \alpha < \theta\rangle$ in some outer model of set theory is {\em geometric} for $\mathbb{R}_{\vec{u}}$ if it satisfies:
\begin{enumerate}
	\item $\{ \kappa(\vec{w}_{\alpha}) \mid \alpha < \theta\}$ is club in $\kappa(\vec{u})$ with $\kappa(\vec{w}_0)=\omega$.
	\item For all limit ordinals $\alpha < \theta$, $\langle \vec{w}_{\beta}, g_{\beta} \mid \beta < \alpha\rangle$ is generic for $\mathbb{R}_{\vec{w}_{\alpha}}$.
	\item For all $\alpha$, $g_{\alpha}$ is $\mathbb{B}(\vec{w}_{\alpha},\vec{w}_{\alpha+1})$-generic.
	\item For every $X\in V_{\kappa(\vec{u})+1}$; $X \in \bigcap_{i<\lh\vec{u}}u_i$ iff for all large $\alpha$, $\vec{w}_{\alpha}\in X$.
	\item For every upper part $h$ for $\mathbb{R}_{\vec{u}}$, for all large $\alpha$, $h(\vec{w}_{\alpha})\in g_{\alpha}$.
\end{enumerate}

Note (4) implies that for all $i<\lh\vec{u}$ there are unboundedly many $\alpha<\theta$ such that $\lh(\vec{w}_{\alpha})=i$.

It is clear that a generic sequence for $\mathbb{R}_{\vec{u}}$ is geometric; we aim to show the converse.

\begin{definition} \label{defnGenericFilter}
	We have already seen that from a generic filter for $\mathbb{R}_{\vec{u}}$ it is possible to derive a generic sequence $G=\langle \vec{w}_{\alpha}, g_{\alpha} \mid \alpha < \theta\rangle$. Conversely, given such a generic sequence we can rederive the generic filter $F_G$, which will consist of all conditions $((\vec{v}_0,e_0,p_0),...,(\vec{v}_{n-1},e_{n-1},p_{n-1}),(\vec{u},h))$ with $(\vec{v}_n,e_n):=(\vec{u},h)$ such that:
	\begin{itemize}
		\item For all $k<n$ there is $\alpha$ such that $\vec{v}_k=\vec{w}_{\alpha}$ and  $p_k\in g_{\alpha}$
		\item For $\alpha$ and $k\leq n$, if $\kappa(\vec{v}_{k-1})<\kappa(\vec{w}_{\alpha})<\kappa(\vec{v}_k)$, then $\vec{w}_{\alpha}\in\dom e_k$ and $e_k(\vec{w}_{\alpha})\in g_{\alpha}$.
	\end{itemize}
\end{definition}

\begin{definition}
	We will say a sequence $\langle \vec{w}_{\alpha}, g_{\alpha} \mid \alpha < \theta\rangle$ {\em respects} $(\vec{u},h)$ if for all $\alpha < \theta$ we have $\vec{w}_{\alpha} \in \dom h$ and $h(\vec{w}_{\alpha})\in g_{\alpha}$.
\end{definition}

\begin{lemma} \label{geometricLemma}
	Let $\vec{u} \in \mathcal{U}$, $h$ an upper part for $\mathbb{R}_{\vec{u}}$ and $D\subseteq	\mathbb{R}_{\vec{u}}$ dense open. Then there is an upper part $h' \leq h$ such that for every geometric sequence $G$ respecting $(\vec{u},h')$ we have $F_G\cap D \neq \phi$.
\end{lemma}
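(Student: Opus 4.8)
The plan is to prove this by recursion on $\kappa(\vec{u})$, exploiting the Prikry-style machinery and, crucially, the capturing property (clause (6) of $\mathcal{U}$). The intuition is that being geometric and respecting $(\vec{u},h')$ forces the derived filter $F_G$ to ``look generic enough'' to meet $D$; we will build $h'$ so that this is guaranteed for every such $G$ at once.

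First I would set up the recursion. Fix $D$ dense open. For each lower part $s$ for $\mathbb{R}_{\vec{u}}$, consider whether $s$ can be extended into $D$ by a direct extension of the upper part alone; using the Prikry property (Proposition on the Prikry property) and normality, take $h_s \leq h$ so that if some $h^* \leq h$ has $s\frown((\vec{u},h^*)) \in D$, then already $s\frown((\vec{u},h_s)) \in D$ (since $D$ is open, this is really: $s\frown((\vec{u},h_s))$ meets $D$ in a direct-extension sense). Then define $X$ to be the $^*$-open set of lower parts $s$ such that $s\frown((\vec{u},h_s)) \in D$, and apply capturing to get $h^{(1)} \leq h$ capturing $X$. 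Simultaneously, for the ``lower'' contribution, for each ultrafilter sequence $\vec{v}$ of length $< \lh\vec{u}$ occurring in $\dom h^{(1)}$, the inductive hypothesis applied to $\vec{v}$ and the dense open set $D\cap\mathbb{R}_{\vec{v}}$ (more precisely, the trace of $D$ on conditions whose largest new triple sits at $\vec{v}$) yields an upper part refining $h^{(1)}\upharpoonright\vec{v}$; by normality over all such $\vec{v}$ (and a diagonal intersection across the index $i = \lh\vec{v}$ using clause (4)/(5) of $\mathcal{U}'$), pull these together into a single $h' \leq h^{(1)}$ whose restriction to each $\vec{v}\in\dom h'$ already has the inductively guaranteed property for $\mathbb{R}_{\vec{v}}$.

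Next I would verify that this $h'$ works. Let $G = \langle \vec{w}_\alpha, g_\alpha \mid \alpha < \theta\rangle$ be geometric, respecting $(\vec{u},h')$. Consider $F_G$. If $F_G \cap D \neq \emptyset$ we are done, so suppose not; I want a contradiction. Take the empty lower part (or any initial segment) and chase density: since $D$ is dense, any condition in $F_G$ has an extension in $D$; pick such an extension $t$ of minimal length below a condition of $F_G$ of the form $s\frown((\vec{u},h'))$ with $s$ built from $G$. The largest new triple of $t$ sits at some $\vec{v}$ with $\vec{v} = \vec{w}_\alpha$ for some $\alpha$ (by clause (1), the $\kappa(\vec{w}_\alpha)$ are club, and genericity of $g_\alpha$ together with respecting lets us realign any new triple to one coming from $G$ — this is the same realignment move as in the Prikry-property proof, replacing an arbitrary $q$ by $q^* \in g_\alpha$). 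Now split $t = r\frown((\vec{v},e,q))\frown y$. Because $\vec{v}\in\dom h'$ and the capturing dichotomy holds for $X$ with this $r$ and $i = \lh\vec{v}$: case (i) is impossible since $t$ witnesses an extension into $D$, so we are in case (ii), giving densely many $q$ below $h'_i(\vec{v})$ with $r\frown((\vec{v},h'\upharpoonright\vec{v},q)) \in X$, i.e. $r\frown((\vec{v},h'\upharpoonright\vec{v},q))\frown((\vec{u},h_{r\frown\ldots})) \in D$. By clause (3) of geometric ($g_\alpha$ generic in $\mathbb{B}(\vec{w}_\alpha,\vec{w}_{\alpha+1})$) some such $q\in g_\alpha$; and by clause (5)/respecting, $h'(\vec{w}_\alpha)\in g_\alpha$. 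That produces a condition in $F_G \cap D$ of strictly smaller length (the new triple at $\vec{v}$ has been absorbed into the upper part $h'\upharpoonright\vec{v}$ applied at lower stages, which are handled by the inductive hypothesis on $\mathbb{R}_{\vec{v}}$ via clause (2) of geometric), contradicting minimality.

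The main obstacle I expect is the bookkeeping in the inductive step: ensuring that a \emph{single} $h'$ simultaneously (a) captures the ``top-level'' set $X$, (b) restricts correctly to satisfy the inductive hypothesis at every $\vec{v}\in\dom h'$ and every length $i<\lh\vec{u}$, and (c) is compatible with the realignment argument — all while staying inside the $\bar\kappa$-complete ultrafilter of Boolean values (clause (3) of $\mathcal{U}'$) so that the normality/diagonal-intersection steps remain legal. The delicate point is that the inductive hypothesis for $\vec{v}$ produces an upper part for $\mathbb{R}_{\vec{v}}$, and we must see that $h'\upharpoonright\vec{v}$ refining it is preserved under the further shrinkings needed for capturing at the top level; this is exactly the kind of interleaving that clause (5) of $\mathcal{U}'$ (the coherence of the $\mathcal{F}_i$'s across different $\vec{w}$) and the closure of the relevant Lévy collapse (used as in the construction of $\mathcal{U}$) are designed to support, so the steps should compose, but the argument must be laid out carefully to avoid circularity in the order of shrinkings.
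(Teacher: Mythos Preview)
Your approach has a genuine gap. The proposed recursion on $\kappa(\vec{u})$ cannot be set up as you describe: there is no way to trace the dense open $D \subseteq \mathbb{R}_{\vec{u}}$ down to a dense open subset of $\mathbb{R}_{\vec{v}}$ for $\vec{v}$ with $\kappa(\vec{v}) < \kappa(\vec{u})$, so the inductive hypothesis has nothing to apply to. Your phrase ``the trace of $D$ on conditions whose largest new triple sits at $\vec{v}$'' does not define a dense set in $\mathbb{R}_{\vec{v}}$; whether a lower part can be extended into $D$ depends on the full upper part living above $\kappa(\vec{v})$. The minimality argument is also not well-formed: after invoking case~(ii) of capturing to obtain $r\frown((\vec{v},h'\upharpoonright\vec{v},q)) \in X$, you need the resulting condition (with upper part appended) to lie in $F_G$, which requires every $\vec{w}_\beta$ with $\kappa(\max r) < \kappa(\vec{w}_\beta) < \kappa(\vec{v})$ to satisfy $\lh\vec{w}_\beta < \lh\vec{v}$ (so that $\vec{w}_\beta \in \dom(h'\upharpoonright\vec{v})$). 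Nothing in your setup guarantees this, and it is false in general.

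The paper's proof uses a different mechanism that avoids both problems and needs no recursion on $\kappa(\vec{u})$. It iterates the capturing property over \emph{finite sequences} $\eta$ from $\lambda^{<\omega}$: starting from $X^{\phi}$ the set of lower parts $t$ with $t\frown((\vec{u},h^*))\in D$, one defines $X^{\langle i\rangle\frown\eta}$ to be the set of lower parts falling into case~(ii) of the capture of $X^{\eta}$ at length $i$, and takes $h'$ below all the (fewer than $\kappa$ many) $h^{\eta}$. One then observes that densely many $q_0 \in \mathbb{B}(\omega,\kappa)$ have $((\langle\omega\rangle,\phi,q_0)) \in X^{\eta}$ for some $\eta = \langle i_1,\dots,i_{n-1}\rangle$; genericity of $g_0$ picks such a $q_0$, and then one climbs: take $\alpha_{k+1}>\alpha_k$ \emph{minimal} with $\lh\vec{w}_{\alpha_{k+1}} = i_k$, and use genericity of $g_{\alpha_{k+1}}$ (together with $h'(\vec{w}_{\alpha_{k+1}})\in g_{\alpha_{k+1}}$) to find $q_{k+1}$ witnessing case~(ii). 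The minimality of the $\alpha_k$ is exactly what forces all intermediate $\vec{w}_\beta$ to have length $<\lh\vec{w}_{\alpha_{k+1}}$, so the condition built this way really lies in $F_G$.
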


\begin{proof}
	Define $\kappa:=\kappa(\vec{u})$ and $\lambda:=\lh\vec{u}$. Invoke normality to take $h^*\leq h$ such that for every lower part $t$, if there is an $h'$ such that $t\frown((\vec{u},h'))\in D$ then $t\frown((\vec{u},h^*\downharpoonright t))\in D$.	Define $X^{\phi}$ to be the set of lower parts $t$ such that $t\frown((\vec{u},h^*))\in D$. We will inductively define $X^{\eta}$ for $\eta$ any finite sequence of $i<\lambda$. First take $h^{\eta}\leq h^*$ capturing $X^{\eta}$. Then for each $i<\lambda$, the set $X^{\langle i\rangle\frown\eta}$ will consist of all $t$ that with $i$ fall into case (ii) of the capturing of $X^{\eta}$. The number of possible $\eta$ is less than $\kappa$ so by $\kappa$-completeness we can fix $h'$ that is below $h^{\eta}$ for all such $\eta$.

	We note that there are densely-many $r$ in $\mathbb{B}(\omega,\kappa)$ such that $((\langle\omega\rangle,\phi,r))\in X^{\eta}$ for some $\eta$. This is because for any $r\in\mathbb{B}(\omega,\kappa)$ we can extend $((\langle\omega\rangle,\phi,r),(\vec{u},h'))$ to a condition $t\frown((\vec{u},h^*)) \in D$; then $t\in X^{\phi}$ and inductively removing triples of $t$ from the right yields $((\langle\omega\rangle,\phi,r'))\in X^{\eta}$ for some $r'\leq r$ and $\eta$.

	Now we are given a geometric sequence $G=\langle \vec{w}_{\alpha}, g_{\alpha} \mid \alpha < \theta\rangle$ that respects $(\vec{u},h')$ and must show $F_G\cap D \neq \phi$.

	By the density in $\mathbb{B}(\omega,\kappa)$ just noted, and by the genericity of $g_0$, take $q_0\in g_0$ and $\eta$ such that $((\langle\omega\rangle,\phi,q_0))\in X^{\eta}$. Say $\eta=:\langle i_1,...,i_{n-1}\rangle$. Define $\alpha_0=0$ and then inductively take $\alpha_{k+1}>\alpha_k$ minimal such that $\lh\vec{w}_{\alpha_{k+1}}=i_k$. We note that this must be possible by clause (4) in the definition of geometricity. Then for $k\geq 1$ inductively choose $q_k\in g_{\alpha_k}$ such that
	$$s_k:=((\vec{w}_0,h'\upharpoonright\vec{w}_0,q_0),...,(\vec{w}_{\alpha_k},h'\upharpoonright\vec{w}_{\alpha_k},q_k))\in X^{\langle i_{k+1},...i_{n-1}\rangle},$$
	invoking the nature of case (ii) capturing, the genericity of the $g_k$, and the fact $h'(\vec{w}_{\alpha_k})\in g_{\alpha_k}$. This concludes with $s_{n-1}\in X^{\phi}$ so $s_{n-1}\frown((\vec{u},h'))\in D$ (as $h'\leq h^*$), and it remains to show that $s_{n-1}\frown((\vec{u},h'))\in F_G$.

	The first of the two requirements from Definition \ref{defnGenericFilter} is clear from the construction itself. Now we must consider the case of some $\beta < \theta$ and $k\leq n$ such that $\kappa(\vec{w}_{\alpha_{k-1}}) < \kappa(\vec{w}_{\beta}) < \kappa(\vec{w}_{\alpha_k})$, or equivalently $\alpha_{k-1}<\beta<\alpha_k$. Note that the $k=n$ case is taken care of by the respect of $G$ for $(\vec{u},h')$. The minimality of our choice of $\alpha_k$ (together with clauses 2 and 4 in the definition of geometric) tells us that $\lh\vec{w}_{\beta}<\lh\vec{w}_{\alpha_k}$, so $\vec{w}_{\beta}\in\dom h'\upharpoonright\vec{w}_{\alpha_k}$. Then from the respect of $G$ for $(\vec{u},h')$ we have $h'(\vec{w}_{\beta})\in g_{\beta}$ as required.
\end{proof}

\begin{proposition} \label{characteriseGenericity}
	Let $\vec{u}\in\mathcal{U}$. Then a sequence $G$ is generic for $\mathbb{R}_{\vec{u}}$ iff it is geometric for $\mathbb{R}_{\vec{u}}$.
\end{proposition}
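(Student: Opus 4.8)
The plan is to establish the nontrivial direction: if $G=\langle\vec{w}_{\alpha},g_{\alpha}\mid\alpha<\theta\rangle$ is geometric for $\mathbb{R}_{\vec{u}}$ then it is generic, that is, the filter $F_G$ of Definition~\ref{defnGenericFilter} is $V$-generic and re-derives $G$. That $F_G$ is a filter, and that $G_{F_G}=G$, are the routine parts: upward closure of $F_G$ is read straight off Definition~\ref{defnGenericFilter} using that each $g_{\alpha}$ is itself a filter, downward directedness follows by amalgamating two conditions triple by triple (taking meets of collapses inside the $g_{\alpha}$ and meets of upper parts), all legitimate by the coherence clauses~(1)--(3) of geometricity, and clauses~(4)--(5) give that $G$ is read back off $F_G$. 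Everything then reduces to: for each dense open $D\in V$ and each $p=t\frown((\vec{u},h_0))\in F_G$ there is $q\le p$ with $q\in F_G\cap D$; since $F_G$ is a filter and $D$ is open, this suffices.

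Fix such $D$ and $p$. Applying Lemma~\ref{geometricLemma} to $D$ and $h_0$ yields an upper part $h'\le h_0$ such that every geometric sequence respecting $(\vec{u},h')$ meets $D$ through its derived filter. A geometric sequence respects $(\vec{u},h')$ only eventually, but since $\dom h'\in\bigcap_i u_i$ and by clause~(5) there is a limit $\alpha_0<\theta$ with $\kappa(\vec{w}_{\alpha_0})>\kappa(\max t)$ and with $\vec{w}_{\alpha}\in\dom h'$, $h'(\vec{w}_{\alpha})\in g_{\alpha}$ for every $\alpha\ge\alpha_0$. Picking $q_{\alpha_0}\in g_{\alpha_0}$ below $h_0(\vec{w}_{\alpha_0})$ and below $h'(\vec{w}_{\alpha_0})$, I pass to
$$p^*:=t\frown((\vec{w}_{\alpha_0},h_0\upharpoonright\vec{w}_{\alpha_0},q_{\alpha_0}))\frown((\vec{u},h'\upharpoonright\kappa(\vec{w}_{\alpha_0}))),$$
where $h'\upharpoonright\kappa(\vec{w}_{\alpha_0})$ is $h'$ cut down to sequences above $\kappa(\vec{w}_{\alpha_0})$; using $p\in F_G$, $\kappa(\vec{w}_{\alpha_0})>\kappa(\max t)$, and the choice of $\alpha_0$, one checks that $p^*\in F_G$ and $p^*\le p$.

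Next I use the product factorisation from the proofs of Propositions~\ref{preserveCardinals} and~\ref{nameAnalysis}: below $p^*$, $\mathbb{R}_{\vec{u}}$ splits as $\mathbb{R}_{\vec{w}_{\alpha_0}}\times\mathbb{R}'_{\vec{u}}$, where $\mathbb{R}'_{\vec{u}}$ is the copy of the Radin forcing whose first collapse starts at $\kappa(\vec{w}_{\alpha_0})^{+5}$. By clause~(2) the initial segment $G\upharpoonright\alpha_0$ is generic for the small first factor, and in $V[G\upharpoonright\alpha_0]$ the tail $G^{\ge\alpha_0}$ is geometric for $\mathbb{R}'_{\vec{u}}$ and --- this is the whole point of the choice of $\alpha_0$ --- respects $(\vec{u},h'\upharpoonright\kappa(\vec{w}_{\alpha_0}))$ at \emph{every} one of its indices. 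Let $D_1\in V[G\upharpoonright\alpha_0]$ be the dense open subset of $\mathbb{R}'_{\vec{u}}$ induced by $D$ below $p^*$. The proof of Lemma~\ref{geometricLemma}, carried out for $\mathbb{R}'_{\vec{u}}$ over $V[G\upharpoonright\alpha_0]$, now goes through verbatim with $h'\upharpoonright\kappa(\vec{w}_{\alpha_0})$ itself in the role of the shrunk upper part: the normality reduction and the $^{*}$-open sets that the proof associates with $D_1$ are just the traces above $t$ of the ones it associates with $D$, which $h'$ already lies below and captures by construction (capturing persists downward under $\le^{*}$), so no further shrinking, and hence no regress, is incurred. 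This produces a condition in the filter derived from $G^{\ge\alpha_0}$ lying in $D_1$; amalgamating it through the factorisation with $p^*$ yields the required $q\in F_G\cap D$ below $p$.

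Two points carry the weight. The first is the apparent circularity in the fact that a geometric sequence respects a given upper part only eventually: it is resolved by restarting at a limit $\alpha_0$ past the threshold produced by Lemma~\ref{geometricLemma}, so that the tail respects the relevant $h'$ from its first index and the lemma's argument applies to $\mathbb{R}'_{\vec{u}}$ with no new shrinking. The second is that the Capturing property, hence Lemma~\ref{geometricLemma}, must be available for $\mathbb{R}'_{\vec{u}}$ over $V[G\upharpoonright\alpha_0]$; since the $\mathcal{U}$-properties are expressed purely in terms of subsets of $V_{\kappa(\vec{u})}$, they persist across the small forcing $\mathbb{R}_{\vec{w}_{\alpha_0}}$ --- the same transfer already used tacitly in Proposition~\ref{preserveCardinals} when the second factor is claimed to have the Prikry property by a proof identical to the earlier one.
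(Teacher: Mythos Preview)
Your approach diverges from the paper's, and the crucial transfer step is not adequately justified. The paper does not factor the forcing and pass to $V[G\upharpoonright\alpha_0]$; instead it works entirely in $V$. For each lower part $s$ it uses the Prikry property to find $h_s$ deciding the statement ``$\exists t\in\dot{\Gamma}: s\frown t\frown((\vec{u},h))\in D$'', calls $s$ \emph{good} if the decision is positive, and then captures (in $V$) the set of good lower parts. Clause~(2) of geometricity is used to find a pre-good $r$ in $F_{G\upharpoonright\gamma}$ for a carefully chosen limit $\gamma$, and clause~(3) to extend it to a good $s$. Since $s\in V$, the set $D_s=\{t\frown((\vec{u},h^*))\mid s\frown t\frown((\vec{u},h^*))\in D\}$ is dense in $V$, and Lemma~\ref{geometricLemma} applies to it in $V$ with the tail $G\downharpoonright(\gamma+1)$ fully respecting the relevant upper part.

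Your argument instead defines $D_1$ in $V[G\upharpoonright\alpha_0]$ and asserts that the proof of Lemma~\ref{geometricLemma} ``goes through verbatim'' for it with the same $h'$. But the $^*$-open sets $X^\eta_1$ associated to $D_1$ are not traces above a fixed lower part $t$: they have the form $\{t'\mid\exists s'\in F_{G\upharpoonright\alpha_0}: s'\frown t'\in X^\eta\}$, which depends on the generic and lives in $V[G\upharpoonright\alpha_0]$, not $V$. That $h'$ captures these sets is a nontrivial claim requiring its own argument. Worse, to run the inductive construction of Lemma~\ref{geometricLemma} you need each $g_{\alpha}$ for $\alpha\geq\alpha_0$ to hit dense subsets of $\mathbb{B}(\vec{w}_{\alpha},\vec{w}_{\alpha+1})$ that lie in $V[G\upharpoonright\alpha_0]$, whereas clause~(3) only gives genericity over $V$. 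Your appeal to ``persistence of $\mathcal{U}$-properties'' does not cover this: capturing is quantified over all $^*$-open sets of the ambient model, and $\vec{u}$ need not remain an ultrafilter sequence in $V[G\upharpoonright\alpha_0]$ at all. The paper's detour through ``good'' and ``pre-good'' lower parts is precisely the device that keeps every dense set invoked inside $V$, so that clauses~(2) and~(3) of geometricity suffice as stated.
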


\begin{proof}
	We are given $D\subseteq\mathbb{R}_{\vec{u}}$ dense open and wish to show that $D\cap F_G\neq\phi$. We begin by using normality to take an upper part $h$ such that for all lower parts $s$, if there is some $\tilde{h}$ such that $s\frown((\vec{u},\tilde{h}))\in D$ then already $s\frown((\vec{u},h\downharpoonright s))\in D$. For each lower part $s$, use the Prikry property for $\mathbb{R}_{\vec{u}}$ to take $h_s\leq h\downharpoonright s$ such that
	$$((\vec{u},h_s))\parallel\exists t\in \dot{\Gamma}: s\frown t\frown ((\vec{u},h))\in D$$
	where $\dot{\Gamma}$ is the name for the set of all lower parts for $\mathbb{R}_{\vec{u}}$ that appear as the lower part of some condition in the generic filter. We say $s$ is {\em good} if the decision is positive. For good $s$ we have $$D_s:=\{t\frown((\vec{u},h^*))\mid s\frown t\frown((\vec{u},h^*))\in D\}$$
	dense open below $((\vec{u},h_s))$. For these $s$ use Lemma \ref{geometricLemma} to take $h'_s\leq h_s$ such that for all geometric $G$ respecting $(\vec{u},h'_s)$ we have $F_G\cap D \neq\phi$. Then by normality take $h'\leq h\downharpoonright s$ for all $s$, and note that also
	$$((\vec{u},h'))\parallel	\exists t\in \dot{\Gamma}: s\frown t\frown ((\vec{u},h))\in D$$
	for all lower parts $s$. Finally take $h''\leq h'$ that captures the set of good lower parts; we say that a lower part $r$ that falls into case (ii) of this capturing is {\em pre-good}.

	Write the geometric $G$ we were given as $\langle \vec{w}_{\alpha}, g_{\alpha} \mid \alpha < \theta\rangle$, and use clause 5 of geometricity to take $\beta<\theta$ a limit ordinal such that
	$$\forall \alpha\geq \beta: \vec{w}_{\alpha}\in\dom h'', h''(\vec{w}_{\alpha})\in g_{\alpha}.$$
	Then take $\gamma>\beta$ also limit such that for all $\beta<\alpha<\gamma$, $\lh\vec{w}_{\alpha}<\lh\vec{w}_{\gamma}$; this is possible as $\cf\theta>\omega$ and all lengths occur cofinally in $\langle\vec{w}_{\alpha}\mid\alpha<\theta\rangle$.

	\begin{claim}
		There are densely-many $r\frown((\vec{w}_{\gamma},e))$ in $\mathbb{R}_{\vec{w}_{\gamma}}$ such that $r$ is pre-good.
	\end{claim}

	\begin{proof}
		We are given a condition $r\frown((\vec{w}_{\gamma},e))\in\mathbb{R}_{\vec{w}_{\gamma}}$ and have that $r\frown((\vec{w}_{\gamma},e,0),(\vec{u},h'')) \in \mathbb{R}_{\vec{u}}$ so we can extend it to
		$$r'\frown((\vec{w}_{\gamma},e',q))\frown t\frown((\vec{u},h''))\in D,$$
		and the choice of $h$ then gives
		$$r'\frown((\vec{w}_{\gamma},e',q))\frown t\frown((\vec{u},h))\in D.$$
		The decision made by $((\vec{u},h'))$ for $r'\frown((\vec{w}_{\gamma},e',q))$ must thus have been positive, which is to say $r'\frown((\vec{w}_{\gamma},e',q))$ is good. The fact that $\vec{w}_{\gamma}\in\dom h''$ and $q\leq h''(\vec{w})$ then gives that $r'$ is pre-good; we are now done because $r'\frown((\vec{w}_{\gamma},e'))\leq r\frown((\vec{w}_{\gamma},e))$ in $\mathbb{R}_{\vec{w}_{\gamma}}$.
	\end{proof}

	The claim allows us to use property 2 of $G$ to find some pre-good $r$ and $e$ with $r\frown((\vec{w}_{\gamma},e))\in F_{G\upharpoonright\gamma}$ and $\kappa(\max(r))>\kappa(\beta)$. Then we use case (ii) of capturing to take $p\in\mathbb{B}(\vec{w}_{\gamma},\vec{w}_{\gamma+1})$ such that $p\in g_{\gamma}$ and $s:=r\frown((\vec{w}_{\gamma},h''\upharpoonright\vec{w}_{\gamma},p))$ is good. For all $\alpha<\gamma$ with $\kappa(\vec{w}_{\alpha})$ above $r$ we have $\alpha>\beta$, so $\lh\vec{w}_{\alpha}<\lh\vec{w}_{\gamma}$ and $\vec{w}_{\alpha}\in\dom h''\upharpoonright\vec{w}_{\gamma}$; also $h''(\vec{w}_{\alpha})\in g_{\alpha}$ by the choice of $\beta$. Combining these shows us that $s\frown((\vec{w}_{\gamma+1},\phi))\in F_{G\upharpoonright(\gamma+1)}$.

	Now $D_s$ is dense and $G\downharpoonright(\gamma+1)$ respects $h'$ so we can take $t\frown((\vec{u},h^*))\leq((\vec{u},h'))$ in $F_{G\downharpoonright(\gamma+1)}\cap D_s$. Then by the definitions of $F_G$ and $D_s$ we have $s\frown t\frown((\vec{u},h^*))\in F_G\cap D$, and are done.
\end{proof}

\section{The preparatory forcing $\mathbb{Q}_{\vec{u}}$} \label{preparatory_forcing}

In this section we work in the following context.

\begin{setting}
	Let $\vec{u}\in\mathcal{U}$ with $\kappa:=\kappa(\vec{u})$ and $\lambda:=\lh\vec{u}$ regular uncountable. Let $\kappa^{<\kappa}=\kappa$ and $2^{\kappa^+}=\kappa^{+3}$. Assume there exists a binary tree $T$ of height and size $\kappa^+$ (i.e. a tree such that each node has two successors on the next level) with $\langle x_{\alpha}\mid \alpha<\kappa^{+3}\rangle$ an enumeration of its branches. Let $\langle \dot{\mathcal{E}}_{\alpha}\mid \alpha<\kappa^{+3}\rangle$ be an enumeration of the $\mathbb{R}_{\vec{u}}$-names for graphs on $\kappa^+$. We note that such an enumeration is possible since $\mathbb{R}_{\vec{u}}$ has size $2^{\kappa}$ and the $\kappa^+$-cc.
\end{setting}

\subsection{Defining the forcing $\mathbb{Q}_{\vec{u}}$}

We will want to perform an iteration that preserves $V_{\kappa}$ and successively expands $V_{\kappa+1}$ and thus the sequences of ultrafilters. With this in mind we consider a member $\vec{u}$ of $\mathcal{U}$, and seek to add a partial function $g$ from $V_{\kappa}$ to $V_{\kappa}$ such that defining $g_i := g\upharpoonright\{\vec{w}\in\mathcal{U}\mid\lh \vec{w} = i\}$ we could potentially expand $\vec{u}$ to some $\vec{u}'$ in the generic extension with $g_i\in \mathcal{F}_{\vec{u}',i}$. In order to accomplish this we will need the $g$ we build to be appropriately compatible with the pre-existing members of $\vec{u}$, motivating the following definition which generalises long Prikry forcing to the case of Radin forcing with collapses.

\begin{definition}
	Let $\vec{u}\in\mathcal{U}$. Then $\mathbb{M}_{\vec{u}}$ is defined to have conditions $(c,h)$ where $h$ is an upper part for $\vec{u}$ and there is $\rho^{(c,h)}:=\rho<\kappa$ such that $c$ is a partial function from $\mathcal{U}\cap V_{\rho}$ to $V_{\kappa}$ such that
	$$\forall \vec{v}\in \dom c: c(\vec{v})\in\mathbb{B}(\vec{v},\vec{\kappa})-\{0\}, c\upharpoonright\kappa(\vec{v}) \in \mathcal{F}_{\vec{v}}.$$
	We also require $\kappa(\vec{v})<\kappa(\vec{w})$ for $\vec{v}\in\dom c$ and $\vec{w}\in\dom h$.

	We define $(c',h')\leq(c,h)$ if $c'\upharpoonright\rho^{(c,h)}=c$, $h'\leq h$ and for each $\vec{w}\in\dom c' - \dom c$ we have $\vec{w}\in\dom h$ and $c(\vec{w})\leq h(\vec{w})$.

	Also define $a^{(c,h)}:=\{\kappa(\vec{w})\mid \vec{w}\in\dom c\}$.
\end{definition}

For any $(c,h)\in\mathbb{M}_{\vec{u}}$ and $\vec{w}\in\dom h$ the definition of upper part gives us that $(c\cup h\upharpoonright(\kappa(\vec{w})+1), h\downharpoonright(\kappa(\vec{w})+1))\leq(c,h)$ so by density $\mathbb{M}_{\vec{u}}$ will add a partial function $g$ from $V_{\kappa}$ to $V_{\kappa}$ such that for all upper parts $h$ there is a $\mu<\kappa$ with $g\downharpoonright\mu\leq h$.

We now augment this definition into one that will help us add a family of universal graphs together with functions witnessing their universality.

\begin{definition}
	Let $\vec{u}\in\mathcal{U}$. Then $\mathbb{Q}^*_{\vec{u}}$ has conditions $p=(c,h,t,f)$ such that:
	\begin{enumerate}
		\item $(c,h)\in\mathbb{M}_{\vec{u}}$. We define $a^p$ to be $a^{(c,h)}$.
		\item $t\in[(a^p\cap\sup a^p)\times \kappa^{+3}]^{<\kappa}$.
		\item $f =: \langle f^{\eta}_{\alpha}\mid (\eta,\alpha)\in t\rangle$ with $\dom f^{\eta}_{\alpha}\in [\kappa^+]^{<\kappa}$.
		\item For each $(\eta,\alpha)\in t$ and $\zeta\in\dom f^{\eta}_{\alpha}$ there is $\gamma < \kappa$ with $f^{\eta}_{\alpha}(\zeta) = (x_{\alpha}\upharpoonright\zeta, \gamma)$.
	\end{enumerate}
	We also write $t^{\eta}:=\{\alpha \mid (\eta,\alpha)\in t\}$.

	We define $(c',h',t',f')\leq(c,h,t,f)$ if $(c',h')\leq(c,h)$ in $\mathbb{M}_{\vec{u}}$, $t'\supseteq t$, and for all $(\eta, \alpha)\in t$ we have $f'^{\eta}_{\alpha}\supseteq f^{\eta}_{\alpha}$.

	Note that this definition is implicitly dependent on the $\langle x_{\alpha}\mid\alpha<\kappa^{+3}\rangle$ and $\langle\dot{\mathcal{E}}_{\alpha}\mid\alpha<\kappa^{+3}\rangle$ from the setting.
\end{definition}

In addition to the function $g$ added by $\mathbb{M}_{\vec{u}}$, this forcing will for each $\vec{w}\in \dom g$ and $\alpha<\kappa^{+3}$ add a function from $\kappa^+$ to $T\times\kappa$, the first co-ordinate of which will run along the branch $x_{\alpha}$. The idea here is that after Radin forcing the $\mathbb{R}_{\vec{u}}$-name $\dot{\mathcal{E}}_{\alpha}$ will be realised as a graph on $\kappa^+$ and then (for some $\eta$ to be selected later) the function $f^{\eta}_{\alpha}$ will map it into $T\times\kappa$. We will then include in our list of jointly-universal graphs the graph on $T\times\kappa$ induced by all these embeddings. This raises the problem that there may be disagreements between the many graphs we are trying to simultaneously embed as to whether or not a particular edge should exist. In order to gain better control of the situation we will add a fifth requirement on forcing conditions, and for this we need a technical definition.

\begin{definition}
	Let $s=\langle (\vec{w}_k,e_k,q_k)\mid k<n\rangle$ be a lower part for $\mathbb{R}_{\vec{u}}$, $c$ the first co-ordinate of a condition from $\mathbb{M}_{\vec{u}}$ and $\eta<\kappa$. Then we say $s$ is {\em harmonious with $c$ past $\eta$} if for all $k<n$ we have one of:
	\begin{itemize}
		\item $\kappa(\vec{w}_k)<\eta$.
		\item $\kappa(\vec{w}_k)=\eta$ and $\lh\vec{w}_k = 0$.
		\item $\kappa(\vec{w}_k)>\eta$, $e_k\leq c\upharpoonright\kappa(\vec{w}_k)$, $\kappa(\vec{v})>\eta$ for all $\vec{v}\in\dom e_k$, $\vec{w}_k\in\dom c$ and $q_k\leq c(\vec{w}_k)$.
	\end{itemize}
\end{definition}

\begin{lemma} \label{openHarmony}
	Let $s$ be harmonious with $c$ past $\eta$ and $s'\leq s$ (so $\kappa(\max s')=\kappa(\max s)$). Then $s'$ is harmonious with $c$ past $\eta$.
\end{lemma}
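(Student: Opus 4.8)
The plan is a direct case analysis on the triples making up $s'$. Recall that, by the definition of extension for lower parts, $s'$ is itself a lower part and each of its triples $(\vec{v},d,p)$ is either a direct extension of some triple $(\vec{w}_k,e_k,q_k)$ of $s$ or is addable below one of them. Since $s$ is harmonious with $c$ past $\eta$, this $(\vec{w}_k,e_k,q_k)$ falls into one of the three alternatives in the definition of harmony, and the task is to check that in each resulting combination $(\vec{v},d,p)$ also falls into one of the three alternatives. Throughout one uses that the ordering on the relevant partial functions (domain inclusion together with pointwise $\leq$) is transitive and commutes with the restriction operation $\,\cdot\upharpoonright\kappa(\vec{v})$.

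\emph{Direct extensions.} Here $\vec{v}=\vec{w}_k$, so $\kappa(\vec{v})=\kappa(\vec{w}_k)$ and $\lh\vec{v}=\lh\vec{w}_k$. If $(\vec{w}_k,e_k,q_k)$ satisfies the first or second alternative then $(\vec{v},d,p)$ trivially does too. If it satisfies the third, then from $d\leq e_k\leq c\upharpoonright\kappa(\vec{w}_k)=c\upharpoonright\kappa(\vec{v})$ we get $d\leq c\upharpoonright\kappa(\vec{v})$; from $\dom d\subseteq\dom e_k$ we get $\kappa(\vec{x})>\eta$ for all $\vec{x}\in\dom d$; we have $\vec{v}=\vec{w}_k\in\dom c$; and $p\leq q_k\leq c(\vec{w}_k)=c(\vec{v})$. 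So $(\vec{v},d,p)$ is in the third alternative.

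\emph{Addable triples.} Here $\vec{v}\in\dom e_k$, and since $e_k$ is an upper part for $\mathbb{R}_{\vec{w}_k}$ its domain is contained in $V_{\kappa(\vec{w}_k)}$, whence $\kappa(\vec{v})<\kappa(\vec{w}_k)$. Consequently, if $(\vec{w}_k,e_k,q_k)$ satisfies the first alternative ($\kappa(\vec{w}_k)<\eta$) or the second ($\kappa(\vec{w}_k)=\eta$), then $\kappa(\vec{v})<\eta$ and $(\vec{v},d,p)$ satisfies the first alternative. If $(\vec{w}_k,e_k,q_k)$ satisfies the third alternative, then $\kappa(\vec{v})>\eta$ because $\vec{v}\in\dom e_k$; moreover $d\leq e_k\upharpoonright\kappa(\vec{v})\leq\bigl(c\upharpoonright\kappa(\vec{w}_k)\bigr)\upharpoonright\kappa(\vec{v})=c\upharpoonright\kappa(\vec{v})$; every $\vec{x}\in\dom d\subseteq\dom e_k$ has $\kappa(\vec{x})>\eta$; $\vec{v}\in\dom e_k\subseteq\dom c$; and $p\leq e_k(\vec{v})\leq c(\vec{v})$. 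So again $(\vec{v},d,p)$ is in the third alternative.

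Since every triple of $s'$ thus falls into one of the three alternatives, $s'$ is harmonious with $c$ past $\eta$. I do not expect any real obstacle here: the only points requiring slight care are the observation that ``addable below'' forces a strict drop in the $\kappa$-value (domains of upper parts live inside $V_{\kappa(\vec{w}_k)}$) — which is precisely what lets the first two alternatives for $(\vec{w}_k,e_k,q_k)$ propagate down to $(\vec{v},d,p)$ — and the routine bookkeeping with transitivity of the partial-function ordering under restriction.
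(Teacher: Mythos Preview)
Your proof is correct and follows essentially the same approach as the paper's: a case split on whether a triple of $s'$ is a direct extension of a triple of $s$ or is addable below one, followed by checking the three alternatives of harmony. The paper's proof is terser---it dispatches the direct-extension case with ``it is clear that it satisfies the conditions'' and handles the addable case in two lines---but the content is the same.
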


\begin{proof}
	Consider some element $(\vec{v},d,p)$ from $s'$. If $\vec{v}$ already occurs in $s$ then it is clear that is satisfies the conditions. Otherwise it was added below some element $(\vec{w},e,q)$ from $s$ (because $\kappa(\max s') = \kappa(\max s)$). If $\kappa(\vec{w})\leq\eta$ then $\kappa(\vec{v})<\eta$ so all is well. Otherwise since $\vec{v}\in\dom e$ we get $\kappa(\vec{v})>\eta$. The required conditions in this case follow since $d\leq e\upharpoonright V_{\kappa(\vec{v})}$ and $p\leq e(\vec{v})$.
\end{proof}

We are now ready to define the desired forcing.

\begin{definition}
	The forcing $\mathbb{Q}_{\vec{u}}$ consists of conditions $(c,h,t,f)$ that satisfy the four conditions from the definition of $\mathbb{Q}^*_{\vec{u}}$, which for convenience we repeat here, together with one more:
	\begin{enumerate}
		\item $(c,h)\in\mathbb{M}_{\vec{u}}$.
		\item $t\in[(a\cap\sup a)\times \kappa^{+3}]^{<\kappa}$ where $a:=a^{(c,h)}$.
		\item $f =: \langle f^{\eta}_{\alpha}\mid (\eta,\alpha)\in t\rangle$ with $\dom f^{\eta}_{\alpha}\in [\kappa^+]^{<\kappa}$.
		\item For each relevant $\eta$, $\alpha$ and $\zeta$ there is $\gamma < \kappa$ with $f^{\eta}_{\alpha}(\zeta) = (x_{\alpha}\upharpoonright\zeta, \gamma)$.
		\item Let $\eta\in a\cap\sup a$, $\alpha,\beta\in t^{\eta}$, $s$ a lower part for $\mathbb{R}_{\vec{u}}$ that is harmonious with $c$ past $\eta$, and $\zeta,\zeta'\in\dom f^{\eta}_{\alpha}\cap \dom f^{\eta}_{\beta}$. Let also $f^{\eta}_{\alpha}(\zeta) = f^{\eta}_{\beta}(\zeta)\neq f^{\eta}_{\alpha}(\zeta')=f^{\eta}_{\beta}(\zeta')$. Then
			$$s\frown((\vec{u},h))\forces_{\mathbb{R}_{\vec{u}}} \zeta \dot{\mathcal{E}}_{\alpha}\zeta' \leftrightarrow \zeta\dot{\mathcal{E}}_{\beta}\zeta'.$$
	\end{enumerate}
\end{definition}

We will be able to use this final condition at the end of the argument to ensure that graphs (given by the $\dot{\mathcal{E}}_{\alpha}$) that we wish to map to the same place will agree about which edges should exist. But first we must establish that the right kind of generic object is still added. In doing so we establish a slightly stronger result that tidies up the conditions and will aid some of our later reasoning.

\begin{lemma} \label{squareOff}
	Let $\vec{u}\in\mathcal{U}$, $l$ an upper part for $\mathbb{R}_{\vec{u}}$, $\eta<\mu<\kappa$, $\epsilon,\epsilon'<\kappa^{+3}$ and $\zeta,\zeta'<\kappa^+$. Let $p \in \mathbb{Q}_{\vec{u}}$ with $\eta \in a^p$. Then there are densely many conditions $q=(c,h,t,f)$ below $p$ such that $a^q$ has a maximal element greater than $\mu$, $h\leq l$, and there are $A\in[\kappa^{+3}]^{<\kappa}$ and $B\in[\kappa^+]^{<\kappa}$ with $t = (a^q\cap\sup a^q)\times A\ni(\eta,\epsilon),(\eta,\epsilon')$ and $\dom f^{\theta}_{\beta} = B\ni\zeta,\zeta'$ for each $(\theta, \beta)\in t$.
\end{lemma}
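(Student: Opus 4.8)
The plan is to build the desired $q$ in two stages: first extend $p$ in the $\mathbb{M}_{\vec{u}}$-coordinate to pin down the required domain and add a maximal element to $a$, and then grow $t$ and the $f$'s to the prescribed rectangular shape, checking that clause (5) survives. First I would use the density fact about $\mathbb{M}_{\vec{u}}$ (noted right after its definition): given $p=(c,h,t,f)$ with $\eta\in a^p$, pick some $\vec{w}\in\dom h$ with $\kappa(\vec{w})>\mu$ and pass to $(c\cup h\!\upharpoonright\!(\kappa(\vec{w})+1),\,h\!\downharpoonright\!(\kappa(\vec{w})+1))\leq (c,h)$; shrinking $h$ further we may also arrange $h\leq l$. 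This makes $\kappa(\vec{w})$ the maximal element of the new $a$ and it exceeds $\mu$. Call the resulting first two coordinates $(c_1,h_1)$.

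Next I would choose $A\in[\kappa^{+3}]^{<\kappa}$ containing $\epsilon,\epsilon'$ and every $\alpha$ that already appears in $t$, and $B\in[\kappa^+]^{<\kappa}$ containing $\zeta,\zeta'$ and $\bigcup\{\dom f^\theta_\beta\mid(\theta,\beta)\in t\}$. Set $t_1:=(a_1\cap\sup a_1)\times A$ where $a_1:=a^{(c_1,h_1)}$, and for each $(\theta,\beta)\in t_1$ define $f_1{}^\theta_\beta$ to be a function with domain $B$ extending $f^\theta_\beta$ whenever $(\theta,\beta)\in t$, and defined arbitrarily on the rest of $B$ subject only to clause (4), i.e. $f_1{}^\theta_\beta(\xi)=(x_\beta\!\upharpoonright\!\xi,\gamma_{\theta,\beta,\xi})$ for some $\gamma_{\theta,\beta,\xi}<\kappa$. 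The size bookkeeping is routine: $|A|,|B|<\kappa$ and $|a_1|<\kappa$, so $t_1$ and each $f_1{}^\theta_\beta$ are of size $<\kappa$, as required for conditions 2 and 3.

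The only real content is verifying clause (5) for $q:=(c_1,h_1,t_1,f_1)$. So suppose $\theta\in a_1\cap\sup a_1$, $\alpha,\beta\in A$, $s$ is a lower part harmonious with $c_1$ past $\theta$, $\xi,\xi'\in B$, and $f_1{}^\theta_\alpha(\xi)=f_1{}^\theta_\beta(\xi)\neq f_1{}^\theta_\alpha(\xi')=f_1{}^\theta_\beta(\xi')$. By the form of the values dictated by clause (4), the first equation forces $x_\alpha\!\upharpoonright\!\xi=x_\beta\!\upharpoonright\!\xi$ (and likewise at $\xi'$). The key point — and this is where I expect the main obstacle to be — is that because $\langle x_\alpha\rangle$ enumerates the branches of a \emph{binary} tree of height $\kappa^+$, the ordinals $\xi,\xi'$ where two distinct branches $x_\alpha,x_\beta$ agree form an initial segment of $\kappa^+$; so if they agree at $\xi$ and at $\xi'$ and (say) $\xi<\xi'$, then they agree on all of $\xi'$, hence $x_\alpha=x_\beta$ up to $\xi'$ and in particular the \emph{names} $\dot{\mathcal{E}}_\alpha$ and $\dot{\mathcal{E}}_\beta$ need not literally coincide — what we actually need is that the disagreement pattern ``$f(\xi)=f(\xi')$ at one index but differing values'' can only be witnessed in configurations that $p$ already controlled, or else is vacuous. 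Concretely: if $\alpha=\beta$ the forcing statement $s\frown((\vec{u},h_1))\Vdash \xi\dot{\mathcal{E}}_\alpha\xi'\leftrightarrow\xi\dot{\mathcal{E}}_\beta\xi'$ is trivially true; and if $\alpha\neq\beta$, then since $f_1{}^\theta_\alpha$ and $f_1{}^\theta_\beta$ take values with first coordinates running along $x_\alpha$ and $x_\beta$ respectively, the hypothesis $f_1{}^\theta_\alpha(\xi)=f_1{}^\theta_\beta(\xi)$ (same $T\times\kappa$ value, same first coordinate $x_\alpha\!\upharpoonright\!\xi=x_\beta\!\upharpoonright\!\xi$) together with the fact that $x_\alpha\neq x_\beta$ as branches means they split at some level, so they can agree at only boundedly many $\xi$; one then checks that the only pairs $(\xi,\xi')$ satisfying the full hypothesis of clause (5) are those already constrained by the corresponding pair in the original $t,f$ (when $(\theta,\alpha),(\theta,\beta)\in t$ and $\xi,\xi'\in\dom f^\theta_\alpha\cap\dom f^\theta_\beta$), in which case clause (5) for $p$ gives the conclusion — after noting that $s$ harmonious with $c_1$ past $\theta$ implies $s$ harmonious with $c$ past $\theta$ since $c\subseteq c_1$ and, by Lemma~\ref{openHarmony}-style reasoning, $s\frown((\vec{u},h_1))\leq s\frown((\vec{u},h))$ so the forcing statement transfers; in the remaining cases the hypothesis of clause (5) is never met. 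If instead the freshly-chosen values happen to create a spurious collision among the new entries, I would simply choose the arbitrary $\gamma_{\theta,\beta,\xi}$'s injectively (for fixed $\theta,\beta$, distinct $\xi$ get distinct $\gamma$), which is possible as $|B|<\kappa$, thereby making $f_1{}^\theta_\beta$ injective and rendering the ``$f(\xi)=f(\xi')$'' disagreement hypothesis in clause (5) impossible for any pair involving a new entry. With clause (5) secured, $q\leq p$ is a condition of the required form, and since $\vec{w}$ (hence $\mu$) and the sets $A,B$ were chosen only to contain prescribed finite/small data, such $q$ are dense below $p$.
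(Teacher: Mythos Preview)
Your construction of $q$ matches the paper's exactly: extend $c$ by adjoining $h\!\upharpoonright\!(\kappa(\vec{w})+1)$ for some $\vec{w}\in\dom h$ with $\kappa(\vec{w})>\mu$, shrink $h$ below $l$, then square off $t$ and the $f$'s by choosing the new second coordinates $\gamma$ pairwise distinct and distinct from all old values. Once you commit to that injective choice, any instance of the hypothesis of clause~(5) is forced to have $(\theta,\alpha),(\theta,\beta)\in t$ (old) and $\xi,\xi'\in\dom f^{\theta}_{\alpha}\cap\dom f^{\theta}_{\beta}$ (old) --- so the entire digression about branches agreeing on initial segments is unnecessary and can be deleted.

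The genuine gap is in the step ``$s$ harmonious with $c_1$ past $\theta$ implies $s$ harmonious with $c$ past $\theta$ since $c\subseteq c_1$''. This implication goes the wrong way. Harmony with $c$ past $\theta$ requires, for each triple $(\vec{w}_k,e_k,q_k)$ of $s$ with $\kappa(\vec{w}_k)>\theta$, that $\vec{w}_k\in\dom c$. But $s$ is only assumed harmonious with $c_1$, and $c_1$ properly extends $c$: nothing prevents $s$ from containing triples with $\vec{w}_k\in\dom c_1\setminus\dom c$ (i.e.\ with $\kappa(\vec{w}_k)$ lying between $\ssup a^r$ and $\kappa(\vec{w})$). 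For such $s$ you cannot invoke clause~(5) of the old condition $r$ directly, and the transfer $s\frown((\vec{u},h_1))\leq s\frown((\vec{u},h))$, while true, does not help since the premise fails.

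The paper isolates exactly this obstruction and handles it by a density argument inside $\mathbb{R}_{\vec{u}}$: given any $s^*\frown((\vec{u},h''))\leq s\frown((\vec{u},h_1))$, split $s^*=s^*_1\frown s^*_2$ at height $\ssup a^r$. Then $s^*_1$ \emph{is} harmonious with $c$ past $\theta$ (all its triples above $\theta$ live in $\dom c$), so clause~(5) of $r$ gives $s^*_1\frown((\vec{u},h))\Vdash\xi\dot{\mathcal{E}}_{\alpha}\xi'\leftrightarrow\xi\dot{\mathcal{E}}_{\beta}\xi'$. One then shrinks the second coordinate of $\min s^*_2$ so that $s^*_2$ becomes addable below $(\vec{u},h)$, producing $s^{**}\leq s^*$ with $s^{**}\frown((\vec{u},h''))\leq s^*_1\frown((\vec{u},h))$. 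This is the missing idea in your argument.
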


\begin{proof}
	We are given some condition $r=(c,h,t,f)\leq p$ to extend. Choose some $\vec{w}\in\dom h$ such that $\kappa(\vec{w})>\mu$ and define $c' = c\cup h\upharpoonright (\kappa(\vec{w})+1)$ and take $h'\leq l$, $h\downharpoonright\vec{w}$. Then $(c',h')$ will be in $\mathbb{M}_{\vec{u}}$ by the definition of upper part, and $a^{(c',h')}$ will have a maximum element $\kappa(\vec{w})$ as required.

	Now we wish to add new points to $t$ and the domains of the $f$ functions, in order to ensure they contain the required co-ordinates and are ``squared off'' as specified. There are $<\kappa$-many new points needed so we can choose values for the second co-ordinates of the $f^{\theta}_{\beta}(\tau)$ that are all distinct both from pre-existing values and each other. This will avoid creating any new instances of the fifth clause of the definition of $\mathbb{Q}_{\vec{u}}$. Call the resulting condition $q=(c',h',t',f')$.

	Suppose we are given $\theta \in a^{r'}\cap\sup a^{r'}$, $\alpha,\beta \in A$, $s$ a lower part for $\mathbb{R}_{\vec{u}}$ harmonious with $c'$ past $\theta$, and $\tau,\tau'\in B$ such that $f'^{\theta}_{\alpha}(\tau) = f'^{\theta}_{\beta}(\tau)\neq f'^{\theta}_{\alpha}(\tau')=f'^{\theta}_{\beta}(\tau')$. We want
		$$s\frown((\vec{u},h'))\forces \tau\dot{\mathcal{E}}_{\alpha}\tau'\leftrightarrow\tau\dot{\mathcal{E}}_{\beta}\tau'.$$
	Note by the construction of $f'$ that we must have $\theta\in a^r\cap\sup a^r$ with $\alpha,\beta\in t^{\theta}$ and $\tau,\tau'\in\dom f^{\theta}_{\alpha}\cap \dom f^{\theta}_{\beta}$ for these equalities to be possible. We would like to split $s$ as $s_1\frown s_2$ such that $s_1$ is harmonious with $c$ past $\theta$ and $s\frown((\vec{u},h'))\leq s_1\frown((\vec{u},h))$. Unfortunately this may not be possible because the smallest triple of $s_1$ may have a second co-ordinate that includes entries from $c$. So instead we show that $s\frown((\vec{u},h'))$ forces the required statement by a density argument.

	Given any $s^*\frown((\vec{u},h''))\leq s\frown((\vec{u},h'))$ split $s^*$ as $s^*_1\frown s^*_2$ such that $\kappa(\max s^*_1)<\ssup a^r$ and $\kappa(\min s^*_2)\geq\ssup a^r$. By Lemma \ref{openHarmony} we have that $s^*_1\frown s^*_2$ is harmonious with $c'$ past $\theta$, and so also $s^*_1$ is harmonious with $c$ past $\theta$. By the conditionhood of $r$ this gives
	$$s^*_1\frown((\vec{u},h))\forces \tau\dot{\mathcal{E}}_{\alpha}\tau'\leftrightarrow\tau\dot{\mathcal{E}}_{\beta}\tau'.$$
	Strengthen $s^*_2$ to $s^{**}_2$ by shrinking the second co-ordinate of $\min s^*_2$ as necessary to ensure that it lies above $\sup a^r$, so that $s^{**}_2$ can be added below $h$. Defining $s^{**}:=s^*_1\frown s^{**}_2$ this gives us $s^{**}\frown((\vec{u},h''))\leq s^*_1\frown((\vec{u},h))$, so $s^{**}\frown((\vec{u},h''))$ forces the desideratum. We also have $s^{**}\frown((\vec{u},h''))\leq s^*\frown((\vec{u},h''))$ so we are done.
\end{proof}

Being able to perform the argument above is a reason for the second co-ordinate in the definition of the $f^{\eta}_{\alpha}$. From another perspective the second co-ordinate gives us a greater degree of flexibility in our embeddings into the jointly-universal graphs.

\subsection{Properties of $\mathbb{Q}_{\vec{u}}$}

We now prove properties of the $\mathbb{Q}_{\vec{u}}$-forcings that will be valuable when we come to iterate them. First we recall some definitions.

\begin{definition}
	A subset $X$ of a forcing $\mathbb{P}$ is {\em centred} if every finite subset of $X$ has a lower bound. The forcing $\mathbb{P}$ is {\em $\kappa$-compact} if every centred subset of size less than $\kappa$ has a lower bound.
\end{definition}

Note that $\kappa$-compactness implies $\kappa$-directed closure.

\begin{lemma} \label{Qcompact}
	The forcing $\mathbb{Q}_{\vec{u}}$ is $\kappa$-compact.
\end{lemma}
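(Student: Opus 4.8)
The plan is to take coordinatewise unions in the first, third and fourth coordinates and a suitable common lower bound in the second, the only delicate point being the harmony clause~(5). Fix a centred $X=\{p_\xi=(c_\xi,h_\xi,t_\xi,f_\xi)\mid\xi<\delta\}$ with $\delta<\kappa$; since $\kappa$ is regular, $\rho^*:=\sup_{\xi<\delta}\rho^{(c_\xi,h_\xi)}<\kappa$. As $X$ is centred, any two of its members have a common lower bound, and unwinding the orderings of $\mathbb{M}_{\vec u}$ and $\mathbb{Q}_{\vec u}$ shows that the $c_\xi$ cohere (indeed $c_\xi\subseteq c_{\xi'}$ whenever $\rho^{(c_\xi,h_\xi)}\le\rho^{(c_{\xi'},h_{\xi'})}$, so $c^*:=\bigcup_\xi c_\xi$ has $c^*$ agreeing with $c_\xi$ on $V_{\rho^{(c_\xi,h_\xi)}}$) and that the functions $f^\eta_{\alpha,\xi}$ are pairwise compatible whenever $(\eta,\alpha)$ lies in two of the $t_\xi$. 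Set $t^*:=\bigcup_\xi t_\xi$ and, for $(\eta,\alpha)\in t^*$, $f^{*\eta}_\alpha:=\bigcup\{f^\eta_{\alpha,\xi}\mid(\eta,\alpha)\in t_\xi\}$; by regularity of $\kappa$ these all have size $<\kappa$, and clauses (2)--(4) are inherited from the $p_\xi$.

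For the second coordinate I will want the canonical common lower bounds: using that $X$ is centred, fix for each finite $F\subseteq\delta$ a lower bound $r_F=(c^{r_F},h^{r_F},t^{r_F},f^{r_F})\in\mathbb{Q}_{\vec u}$ of $\{p_\xi\mid\xi\in F\}$, with $r_{\{\xi\}}:=p_\xi$; there are only $|\delta|<\kappa$ many of these. By the $\kappa$-completeness of the filters $\mathcal{F}_i$ (clause (3) of $\mathcal{U}'$) together with closure under $u_i$-equivalence (clause (2)), I can choose $h^*_i\in\mathcal{F}_i$ lying pointwise below $h^{r_F}_i$ and with $\dom h^*_i\subseteq\dom h^{r_F}_i$ for every finite $F$; assembling $h^*:=\bigcup_{i<\lambda}h^*_i\in\mathcal{F}_{\vec u}$, shrinking it to an upper part as noted after the definition of ``upper part'', and shrinking once more so that $\dom h^*$ lies above $\rho^*$ (legitimate since every ultrafilter in play is non-principal and $\kappa$-complete and hence concentrates above any bound $<\kappa$), we obtain an upper part $h^*$ with $h^*\le h^{r_F}$ for all finite $F$, in particular $h^*\le h_\xi$ for every $\xi$. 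Put $p^*:=(c^*,h^*,t^*,f^*)$. That $(c^*,h^*)\in\mathbb{M}_{\vec u}$ (with $\rho^{(c^*,h^*)}=\rho^*$), that $t^*\subseteq(a^{p^*}\cap\sup a^{p^*})\times\kappa^{+3}$ for $a^{p^*}:=\{\kappa(\vec v)\mid\vec v\in\dom c^*\}$, and that $p^*\le p_\xi$ for every $\xi$, are all straightforward from the coherence and pairwise compatibility above together with $h^*\le h_\xi$.

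It remains to verify clause~(5) for $p^*$, and this is the one place where centredness --- as opposed to directedness --- forces extra work, since no single $p_\xi$ need record all the data appearing in an instance of~(5). Suppose we are given $\eta\in a^{p^*}\cap\sup a^{p^*}$, $\alpha,\beta\in(t^*)^\eta$, a lower part $s$ harmonious with $c^*$ past $\eta$, and $\zeta,\zeta'\in\dom f^{*\eta}_\alpha\cap\dom f^{*\eta}_\beta$ with $f^{*\eta}_\alpha(\zeta)=f^{*\eta}_\beta(\zeta)\neq f^{*\eta}_\alpha(\zeta')=f^{*\eta}_\beta(\zeta')$. Choose $\xi_1,\dots,\xi_4<\delta$ witnessing respectively $\zeta\in\dom f^\eta_{\alpha,\xi_1}$, $\zeta'\in\dom f^\eta_{\alpha,\xi_2}$, $\zeta\in\dom f^\eta_{\beta,\xi_3}$, $\zeta'\in\dom f^\eta_{\beta,\xi_4}$. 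Every entry $(\vec w,e,q)$ of $s$ with $\kappa(\vec w)>\eta$ has $\vec w\in\dom c^*\subseteq V_{\rho^*}$, so these finitely many $\vec w$ are bounded in rank below $\rho^*$ and we may choose $\xi_5<\delta$ with $\rho^{(c_{\xi_5},h_{\xi_5})}$ above all of them. Set $F:=\{\xi_1,\dots,\xi_5\}$. Since $r_F\le p_{\xi_i}$ for each $i$: $(\eta,\alpha),(\eta,\beta)\in t^{r_F}$ and $\zeta,\zeta'\in\dom f^{r_F,\eta}_\alpha\cap\dom f^{r_F,\eta}_\beta$ with the relevant values of $f^{r_F}$ equal to those of $f^*$, so the displayed value pattern persists for $f^{r_F}$; and $s$ is still harmonious with $c^{r_F}$ past $\eta$, because on $V_{\rho^{(c_{\xi_5},h_{\xi_5})}}$ --- which contains every sequence and Boolean value that harmony past $\eta$ refers to --- the functions $c^{r_F}$, $c_{\xi_5}$ and $c^*$ all agree. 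Clause~(5) applied to $r_F$ now yields $s\frown((\vec u,h^{r_F}))\forces_{\mathbb{R}_{\vec u}}\zeta\dot{\mathcal{E}}_\alpha\zeta'\leftrightarrow\zeta\dot{\mathcal{E}}_\beta\zeta'$, and since $h^*\le h^{r_F}$ we have $s\frown((\vec u,h^*))\le s\frown((\vec u,h^{r_F}))$ and therefore $s\frown((\vec u,h^*))$ forces the same. Hence $p^*\in\mathbb{Q}_{\vec u}$ is a lower bound for $X$. The hardest part of the argument is exactly this last manoeuvre: working around the fact that a centred family need not be directed within itself, by pre-selecting the $<\kappa$ many lower bounds $r_F$ of its finite subsets and pushing $h^*$ below all of them.
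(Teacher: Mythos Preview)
Your proof is correct and follows essentially the same approach as the paper's: form $c^*,t^*,f^*$ as unions over the conditions in $X$, pre-select lower bounds $r_F$ for all finite subsets and take $h^*$ below every $h^{r_F}$, then verify clause~(5) by locating a single finite $F$ whose lower bound $r_F$ already sees all the data in a given instance. The paper's version is terser---it simply asserts that a suitable finite $x\subseteq X$ can be found witnessing the five bullet points---whereas you spell out the choice of $\xi_1,\dots,\xi_5$ and the coherence of the $c_\xi$ explicitly; but the architecture is identical.
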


\begin{proof}
	We are given some $X\subseteq\mathbb{Q}_{\vec{u}}$ with $|X|<\kappa$. For each finite subset $x$ of $X$, take a lower bound $(c^x,h^x,t^x,f^x)$ for $x$. It is clear that we can take some $h^*$ that is below $h^x$ for all such $x$ (using the $\kappa$-completeness of $\mathcal{F}_{\vec{u}}$). Also  form $c^*$, $t^*$ and $f^*$ by unions of all the individual $c$, $t$ and $f$ from conditions in $X$. Note we do not use the $c^x$, $t^x$ and $f^x$ as these are not guaranteed to be compatible.

	We can see that $(c^*,h^*,t^*,f^*)$ satisfies the requirements for being a member of $\mathbb{Q}_{\vec{u}}$ except possibly the fifth one. Suppose we are given $\eta,\alpha,\beta,\zeta$ and $\zeta'$ together with $s$ harmonious with $c^*$ past $\eta$ such that $f^{*,\eta}_{\alpha}(\zeta) = f^{*,\eta}_{\beta}(\zeta)\neq f^{*,\eta}_{\alpha}(\zeta')=f^{*,\eta}_{\alpha}(\zeta')$. Then choose a finite set $x \subseteq X$ that contains conditions $(c,h,t,f)$ which between them witness all of the following properties:
	\begin{itemize}
		\item $s$ is harmonious with $c$ past $\eta$.
		\item $(\eta,\alpha)\in t$ and $\zeta\in\dom f^{\eta}_{\alpha}$.
		\item $(\eta,\alpha)\in t$ and $\zeta'\in\dom f^{\eta}_{\alpha}$.
		\item $(\eta,\beta)\in t$ and $\zeta\in\dom f^{\eta}_{\beta}$.
		\item $(\eta,\beta)\in t$ and $\zeta'\in\dom f^{\eta}_{\beta}$.
	\end{itemize}
	Now $(c^x,h^x,t^x,f^x)$ is a condition in $\mathbb{Q}_{\vec{u}}$ so
	$$s\frown((\vec{u},h^x))\forces\zeta\dot{\mathcal{E}}_{\alpha}\zeta'\leftrightarrow\zeta\dot{\mathcal{E}}_{\beta}\zeta'$$
	and we ensured $h^*\leq h^x$ so $s\frown((\vec{u},h^*))$ will force the same thing.
\end{proof}

\begin{definition}
	The forcing $\mathbb{P}$ has the {\em strong $\kappa^+$-chain condition} if for every sequence $\langle p_{\alpha}\mid \alpha<\kappa^+\rangle$ from $\mathbb{P}$ there are a club $C\subseteq\kappa^+$ and a regressive function $f:(C\cap\cof\kappa)\rightarrow\kappa^+$ such that for all $\alpha,\beta\in C\cap\cof\kappa$ if $f(\alpha)=f(\beta)$ then $p_{\alpha}$ and $p_{\beta}$ are compatible.
\end{definition}

Note that by Fodor's theorem this property immediately implies the usual $\kappa^+$-chain condition.

\begin{lemma} \label{Qstrong}
	The forcing $\mathbb{Q}_{\vec{u}}$ has the strong $\kappa^+$-chain condition.
\end{lemma}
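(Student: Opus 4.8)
The plan is to prove the strong $\kappa^+$-chain condition by a standard $\Delta$-system-style argument, adapted so that we not only get compatibility of the ``support data'' but also verify the fifth clause of the definition of $\mathbb{Q}_{\vec{u}}$ survives amalgamation. Fix a sequence $\langle p_\alpha \mid \alpha < \kappa^+\rangle$ with $p_\alpha = (c_\alpha, h_\alpha, t_\alpha, f_\alpha)$. First I would use $\kappa^{<\kappa} = \kappa$ together with the fact that each $c_\alpha$ lives on $\mathcal{U}\cap V_{\rho_\alpha}$ for some $\rho_\alpha < \kappa$, and that $t_\alpha$ and each $\dom f^{\eta}_{\alpha,\beta}$ have size $<\kappa$, to restrict to a club $C_0$ on which the ``small'' part of the condition stabilises appropriately: for $\alpha \in C_0 \cap \cof\kappa$, everything below $\alpha$ in the ordinal coordinates of $t_\alpha$ and in the domains $\dom f^{\eta}_{\alpha,\beta}$, as well as $c_\alpha$ and $h_\alpha$ themselves (which are genuinely objects of size $\leq\kappa$ in a fixed $\kappa^+$-sized collection), has been ``seen'' — this is where the regressive function $f$ comes from: $f(\alpha)$ codes $(c_\alpha, h_\alpha, t_\alpha \cap (\kappa^{+3}\text{-part below }\alpha), \ldots)$ via a fixed injection of this $\kappa$-sized family into $\kappa^+$. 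The point is that $\mathbb{R}_{\vec u}$ has size $2^\kappa = \kappa^{+3}$ but the data attached to a single condition is of size $\kappa$, so after thinning we may assume $c_\alpha = c_\beta =: c$, $h_\alpha = h_\beta =: h$, and $t_\alpha, t_\beta$ agree on their common (below-$\alpha$) part with the ``high'' ordinal indices of $t_\alpha$ lying in $[\alpha, \kappa^+)$ and those of $t_\beta$ in $[\beta,\kappa^+)$, hence disjoint when $\alpha \neq \beta$; similarly the $\dom f$'s form a $\Delta$-system with root contained in $\alpha$.

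Next, given two such $\alpha < \beta$ with $f(\alpha) = f(\beta)$, I would build the amalgamated condition $q = (c, h, t_\alpha \cup t_\beta, f')$ where $f'$ is the union of $f_\alpha$ and $f_\beta$; on the common part of $t$ the two agree (by choice of $f$), and on the disjoint high parts there is nothing to reconcile as functions. Clauses (1)--(4) of $\mathbb{Q}_{\vec u}$ are immediate: $(c,h)$ is unchanged, $t_\alpha \cup t_\beta$ still has size $<\kappa$ and sits in $(a\cap\sup a)\times\kappa^{+3}$, the domains of the $f'$ functions are still in $[\kappa^+]^{<\kappa}$, and the $(x_\alpha\upharpoonright\zeta, \gamma)$ shape is inherited. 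The work is clause (5). Suppose $\eta \in a \cap \sup a$, indices $\alpha', \beta' \in (t_\alpha\cup t_\beta)^\eta$, a lower part $s$ harmonious with $c$ past $\eta$, and $\zeta, \zeta'$ with $f'^\eta_{\alpha'}(\zeta) = f'^\eta_{\beta'}(\zeta) \neq f'^\eta_{\alpha'}(\zeta') = f'^\eta_{\beta'}(\zeta')$; we must force $\zeta \dot{\mathcal E}_{\alpha'}\zeta' \leftrightarrow \zeta\dot{\mathcal E}_{\beta'}\zeta'$. If both $\alpha', \beta'$ come from $t_\alpha$ (or both from $t_\beta$) this is just clause (5) for $p_\alpha$ (resp. $p_\beta$), since $h$ is the common upper part. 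The genuinely new case is a ``cross'' one: $\alpha'$ from the high part of $t_\alpha$ and $\beta'$ from the high part of $t_\beta$. Here the key observation is that for the equality $f'^\eta_{\alpha'}(\zeta) = f'^\eta_{\beta'}(\zeta)$ to hold we need $\zeta$ and $\zeta'$ to lie in the common root of the $\Delta$-system of domains (the high parts are indexed by disjoint ordinals, but more importantly the specific \emph{values} $(x_{\alpha'}\upharpoonright\zeta,\gamma)$ coincide only if $x_{\alpha'}\upharpoonright\zeta = x_{\beta'}\upharpoonright\zeta$); this is where I would exploit the branch structure of the tree $T$, thinning further at the outset so that for high indices the branches $x_{\alpha'}$ agree below $\alpha$-controlled levels, forcing the overlap points $\zeta,\zeta'$ to be in a pre-stabilised set.

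The main obstacle I expect is precisely this cross-case: ensuring that whenever the amalgamated $f'$ exhibits a clash $f'^\eta_{\alpha'}(\zeta)=f'^\eta_{\beta'}(\zeta)$ with $\alpha'$ from one condition and $\beta'$ from the other, the clash was \emph{already} witnessed within a single original condition, so that clause (5) transfers. The tree $T$ is built into the setting exactly to make this work: since $T$ has height and size $\kappa^+$ and branches $\langle x_\alpha \mid \alpha < \kappa^{+3}\rangle$, I would in the initial thinning also demand that for $\alpha \in C_0 \cap \cof\kappa$ the branch-restrictions $\{x_\gamma \upharpoonright \xi \mid \gamma \in t_\alpha^\eta,\ \xi \in \dom f^\eta_{\alpha,\gamma},\ \xi < \alpha\}$ are fixed, so that agreement of the first coordinate of $f'^\eta_{\alpha'}(\zeta)$ and $f'^\eta_{\beta'}(\zeta)$ at a point $\zeta$ below the root forces $x_{\alpha'}\upharpoonright\zeta = x_{\beta'}\upharpoonright\zeta$, and then $\zeta, \zeta'$ are in the common domain where both original conditions already have these values; applying clause (5) of, say, $p_\alpha$ (recalling $h$ is shared, so ``$s$ harmonious with $c$ past $\eta$'' is the same hypothesis for both) gives the forcing statement, noting $\dot{\mathcal E}_{\alpha'}$ and $\dot{\mathcal E}_{\beta'}$ are the fixed names from the setting and clause (5) for $p_\alpha$ with indices $\alpha', \beta'$ — but this requires $\beta' \in t_\alpha^\eta$, which we do not have. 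So the correct move is subtler: we reduce the cross-clash to a clash in $p_\alpha$ between $\alpha'$ and some index in the root, then to a clash in $p_\beta$ between that root index and $\beta'$, and chain the two biconditionals. I would carry this out by noting that clause (5), applied within $p_\alpha$ to $(\alpha', \rho)$ and within $p_\beta$ to $(\rho, \beta')$ for a suitable root index $\rho$ with matching $f$-values at $\zeta,\zeta'$, yields $s\frown((\vec u, h)) \forces (\zeta\dot{\mathcal E}_{\alpha'}\zeta' \leftrightarrow \zeta\dot{\mathcal E}_\rho\zeta')$ and $s\frown((\vec u,h))\forces(\zeta\dot{\mathcal E}_\rho\zeta'\leftrightarrow\zeta\dot{\mathcal E}_{\beta'}\zeta')$, whose conjunction is what we want. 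Finally, $q \leq p_\alpha$ and $q \leq p_\beta$ is clear from the definition of the order on $\mathbb{Q}_{\vec u}$ (same $(c,h)$, extended $t$ and $f$), so the two conditions are compatible, completing the verification of the strong $\kappa^+$-chain condition.
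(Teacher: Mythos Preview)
Your cross-case argument has a genuine gap. You propose to chain through a ``root index $\rho$'' in $t_\alpha\cap t_\beta$ with $f^\eta_\rho(\zeta)=f^\eta_{\alpha'}(\zeta)$ and $f^\eta_\rho(\zeta')=f^\eta_{\alpha'}(\zeta')$, but nothing in your thinning guarantees such a $\rho$ exists: the root of the $\Delta$-system on the second coordinates of the $t$'s may be empty, and even if not, need not contain an index whose $f$-values happen to match at both $\zeta$ and $\zeta'$. Without such a $\rho$, clause~(5) of $p_\alpha$ and $p_\beta$ individually say nothing linking $\dot{\mathcal E}_{\alpha'}$ to $\dot{\mathcal E}_{\beta'}$.

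The paper closes this gap by a mechanism you do not mention: the name-analysis Lemma~\ref{nameAnalysis}. Before defining the regressive function, for each $i$, each $\alpha\in A^i$, each $\zeta,\zeta'\in B^i$, and each relevant lower part $s$, it shrinks $h^i$ so that $s\frown((\vec u,h^i))$ forces $\zeta\dot{\mathcal E}_\alpha\zeta'$ to be equivalent to a name $\dot y^i_{\alpha,s}(\zeta,\zeta')$ living in a forcing of size $<\kappa$. These small names are objects of $V_\kappa$ and can be coded into the regressive function (the paper's $F_3$), indexed not by $\alpha$ itself but by the \emph{position} $\epsilon$ of $\alpha$ in an enumeration of $A^i$. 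When $F(i)=F(i')$, the index $\beta(j^{i'}_\epsilon)\in A^{i'}$ in the same position as $\alpha$ has both matching $f$-values on the shared domain (via the $F_2$-coding) and the same $\dot y$-name as $\alpha$ (via $F_3$). Now one can chain: clause~(5) of $p^{i'}$ links $\dot{\mathcal E}_{\beta(j^{i'}_\epsilon)}$ with $\dot{\mathcal E}_\beta$, and equality of the $\dot y$-names links $\dot{\mathcal E}_\alpha$ with $\dot{\mathcal E}_{\beta(j^{i'}_\epsilon)}$. This positional-partner idea together with the name reduction is the missing ingredient.

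A secondary error: you claim the upper parts $h_\alpha$ lie in a fixed $\kappa^+$-sized collection and can therefore be stabilised by the regressive code. Upper parts are essentially subsets of $V_\kappa$, so there are $2^\kappa$ of them, and in the intended application $2^\kappa=\kappa^{+3}$. The paper does not stabilise $h^i$; it simply takes $h^*$ any common lower bound of $h^i$ and $h^{i'}$ (any two upper parts are compatible), and the name-analysis step is exactly what allows clause~(5) to transfer down to $h^*$.
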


\begin{proof}
	We are given $\langle p_i\mid i<\kappa^+\rangle$ a sequence of conditions from $\mathbb{Q}_{\vec{u}}$. Define $(c^i,h^i,t^i,f^i):=p^i$, $\langle f^{i,\eta}_{\alpha}\mid(\eta,\alpha)\in t^i\rangle:=f^i$ and $a^i:=a^{p^i}$. We use our ability to extend conditions as in lemma \ref{squareOff} to assume there are $A^i$ and $B^i$ such that $t^i = (a^i\cap\sup a^i)\times A^i$ and for all $(\eta,\alpha)\in t^i$ that $\dom f^{i,\eta}_{\alpha}=B^i$.

	For each $i<\kappa^+$, $\alpha\in A^i$, $\zeta,\zeta'\in B^i$ and lower part $s$ we have ``$\zeta \dot{\mathcal{E}}_{\alpha}\zeta'$'' a binary name, so by Lemma \ref{nameAnalysis} we can take $h'\leq h^i$ and a $\mathbb{R}_{\max s}\times\mathbb{B}(\max s, \gamma)$-name $\dot{y}^i_{\alpha,s}(\zeta,\zeta')$ for some $\gamma<\kappa$ with $s\frown((\vec{u},h'))\forces \zeta \dot{\mathcal{E}}_{\alpha}\zeta'\leftrightarrow \dot{y}^i_{\alpha,s}(\zeta,\zeta')$. For each $i$ we will use the $\kappa$-closure of upper parts to assume, by shrinking as necessary, that $s\frown((\vec{u},h^i))$ forces this for all such $\alpha$, $\zeta$ and $\zeta'$ and for all lower parts $s$ with $\kappa(\max s)\leq\sup a^i$ and the third co-ordinate of $\max s$ equal to zero.

	Enumerate $\bigcup_{i<\kappa^+}A^i\subseteq\kappa^{+3}$ as $\{\beta(j) \mid j<\kappa^+\}$, and for each $i<\kappa^+$ enumerate $R^i:=\{j<\kappa^+\mid \beta(j)\in A^i\}$ in increasing order as $\{j^i_{\epsilon}\mid\epsilon<\mu^i\}$ for some $\mu^i<\kappa$. Fix $\{ t(k)\mid k<\kappa^+\}$ an enumeration of points in $T$ (the tree from which the branches $x_{\alpha}$ come) and define $T^i$ to be the set of $k<\kappa^+$ such that $f^{i,\eta}_{\alpha}(\zeta)=(t(k),\nu)$ for some $\eta$, $\alpha$, $\zeta$ and $\nu$. Construct functions as follows:
	\begin{itemize}
		\item $F_1(i)=(c^i,\mu^i, R^i\cap i, B^i\cap i,T^i\cap i)$.
		\item $F_2(i)$ is the set of tuples $(\eta, \epsilon,\zeta, k,\nu)$ such that $\eta\in a^i\cap\sup a^i$, $\epsilon<\mu^i$, $\zeta<i$, $k<i$ and $f^{i,\eta}_{\beta(j^i_{\epsilon})}(\zeta)=(t(k),\nu)$.
		\item $F_3(i)$ is the set of tuples $(\epsilon, s, \zeta,\zeta', \dot{y}^i_{\beta(j^i_{\epsilon}),s}(\zeta,\zeta'))$ for $\epsilon<\mu^i$, $s$ a lower part of the form described above, and $\zeta,\zeta'\in B^i\cap i$.
	\end{itemize}
	Then we define $F(i)=(F_1(i),F_2(i),F_3(i))$. Note that $F(i)$ will be a member of
		$$(V_{\kappa}^2\times([i]^{<\kappa})^3)\times(\kappa^2\times i^2\times\kappa)^{<\kappa}\times(\kappa\times V_{\kappa}\times i^2\times V_{\kappa})^{<\kappa}.$$
	Fix an injection $G$ from
		$$(V_{\kappa}^2\times([\kappa^+]^{<\kappa})^3)\times(\kappa^2\times (\kappa^+)^2\times\kappa)^{<\kappa}\times(\kappa\times V_{\kappa}\times (\kappa^+)^2\times V_{\kappa})^{<\kappa}$$
	to $\kappa^+$. We have $\kappa^{<\kappa}=\kappa$ which implies $(\kappa^+)^{<\kappa}=\kappa^+$ so we can find a club $C_0$ such that for all points $i$ in $C_0\cap\cof\kappa$,
		$$G``((V_{\kappa}^2\times([i]^{<\kappa})^3)\times(\kappa^2\times i^2\times\kappa)^{<\kappa}\times(\kappa\times V_{\kappa}\times i^2\times V_{\kappa})^{<\kappa})\subseteq i.$$
	This will make $G\circ F:\kappa^+\rightarrow\kappa^+$ regressive on $C_0$.

	Define $C_1$ to be the club subset of $\kappa^+$ consisting of points $i'$ such that for all $i<i'$ we have that $R^i, B^i, T^i\subseteq i'$ and for all $\alpha\neq\beta$ in $A^i$ that $x_{\alpha}\upharpoonright i'\neq x_{\beta}\upharpoonright i'$.

	We will prove that the regressive function $G\circ F$ and club $C_0\cap C_1$ together serve as witnesses to the strong $\kappa^+$-chain condition. So given $i<i'$ in $C_0\cap C_1$ such that $G(F(i))=G(F(i'))$ we wish to show that $p^i$ is compatible with $p^{i'}$. Note that the properties of $C_1$ plus the fact that $F_1(i)=F_1(i')$ mean that $R^i\cap R^{i'}$, $R^i-R^{i'}$ and $R^{i'}-R^i$ are positioned in increasing order as subsets of $\kappa^+$, and likewise for $B^i$ and $T^i$.

	First consider any $\eta\in a^i\cap\sup a^i=a^{i'}\cap\sup a^{i'}$, $\alpha\in A^i\cap A^{i'}$ and $\zeta\in B^i\cap B^{i'}$. It is clear that the first co-ordinates of $f^{i,\eta}_{\alpha}(\zeta)$ and $f^{i',\eta}_{\alpha}(\zeta)$ agree, since they are just $x_{\alpha}\upharpoonright\zeta$; say this is equal to $t(k)$ and then as $k\in T^i$ and $i'\in C_1$ we get $k<i'$. Let $\alpha=:\beta(j)$, with $j\in R^i\cap R^{i'}$ so by the increasing enumeration, $j=:j^i_{\epsilon}=j^{i'}_{\epsilon}$ for some $\epsilon<\mu^i=\mu^{i'}$. We have $\zeta\in B^i\subseteq i'$. Thus the tuple $(\eta,\epsilon,\zeta,k,\pi_2(f^{i',\eta}_{\beta(j^{i'}_{\epsilon})}(\zeta)))$ will be a member of $F_2(i')$ and so also of $F_2(i)$ and we have $f^{i,\eta}_{\alpha}(\zeta)=f^{i',\eta}_{\alpha}(\zeta)$.

	The preceding argument allows us to define a putative lower bound $p^*=(c^*,h^*,t^*,f^*)$ for $p^i$ and $p^{i'}$ given by $c^*=c^i=c^{i'}$, $h^*$ any upper part below $h^i$ and $h^{i'}$, $t^*=t^i\cup t^{i'}$, and $f^{*,\eta}_{\alpha}$ equal to either $f^{i,\eta}_{\alpha}\cup f^{i',\eta}_{\alpha}$, $f^{i,\eta}_{\alpha}$ or $f^{i',\eta}_{\alpha}$ depending on whether $\alpha$ is in $A^i\cap A^{i'}$, $A^i-A^{i'}$ or $A^{i'}-A^i$ respectively. It is clear that this $p^*$ will satisfy the first four clauses of the definition of $\mathbb{Q}_{\vec{u}}$ so it remains to show the fifth.

	Define $a^*:=a^i=a^{i'}$.	We will be given $\eta\in a^*\cap\sup a^*$, $\alpha,\beta\in t^{*,\eta}$, $s$ harmonious with $c^*$ past $\eta$ and $\zeta,\zeta'\in \dom f^{*,\eta}_{\alpha}\cap f^{*,\eta}_{\beta}$ such that $f^{*,\eta}_{\alpha}(\zeta) = f^{*,\eta}_{\beta}(\zeta)\neq f^{*,\eta}_{\alpha}(\zeta')=f^{*,\eta}_{\beta}(\zeta')$. We wish to show that
	$$s\frown((\vec{u},h^*))\forces \zeta\dot{\mathcal{E}}_{\alpha}\zeta'\leftrightarrow\zeta\dot{\mathcal{E}}_{\beta}\zeta'.$$
	We see that
	$$(\alpha,\zeta),(\alpha,\zeta'),(\beta,\zeta),(\beta,\zeta')\in (A^i\times B^i)\cup(A^{i'}\times B^{i'}),$$
	which compels that either all the co-ordinates occur in a single one of $A^i\times B^i$ or $A^{i'}\times B^{i'}$, from which the result is obvious, or (without loss of generality) that we have one of the following two cases.

	\begin{case1}
	$\alpha,\beta \in A^i\cap A^{i'}$, $\zeta\in B^i-B^{i'}$ and $\zeta'\in B^{i'}-B^i$.
	\end{case1}

	We may assume $\alpha\neq\beta$. The definition of $C_1$ and the fact that $\alpha,\beta \in A^i$ ensures that $x_{\alpha}\upharpoonright i'\neq x_{\beta}\upharpoonright i'$. But $B^{i'}\cap i'=B^i\cap i$ so we must have have $\zeta'\geq i'$, giving $x_{\alpha}\upharpoonright \zeta'\neq x_{\beta}\upharpoonright \zeta'$. This contradicts $f^{*,\eta}_{\alpha}(\zeta')=f^{*,\eta}_{\beta}(\zeta')$.

	\begin{case2}
	$\alpha\in A^i-A^{i'}$, $\beta\in A^{i'}-A^i$ and $\zeta,\zeta'\in B^i\cap B^{i'}$.
	\end{case2}

	Take $j$ such that $\beta(j)=\alpha$, $j'$ such that $\beta(j')=\beta$, $\epsilon$ such that $j^i_{\epsilon}=j$ and $\epsilon'$ such that $j^{i'}_{\epsilon'}=j'$. Take $k$ such that $x_{\alpha}\upharpoonright\zeta=x_{\beta}\upharpoonright\zeta=t(k)$ and $k'$ such that $x_{\alpha}\upharpoonright\zeta'=x_{\beta}\upharpoonright\zeta=t(k')$; note that $k,k'\in T^i\cap T^{i'}\subseteq i$. Likewise $\zeta,\zeta'< i$. Combining all this information tells us that the tuples $(\eta, \epsilon, \zeta, k, \nu)$ and $(\eta, \epsilon, \zeta', k', \nu')$ appear in $F_2(i)$ for some $\nu$ and $\nu'$. Hence they also appear in $F_2(i')$ and we have
	$$f^{i',\eta}_{\beta(j^{i'}_{\epsilon})}(\zeta)=f^{i,\eta}_{\beta(j^i_{\epsilon})}(\zeta)=f^{*,\eta}_{\alpha}(\zeta)=f^{*,\eta}_{\beta}(\zeta)=f^{i',\eta}_{\beta}(\zeta)$$
	and similarly for $\zeta'$. These equalities occur entirely inside $p^{i'}$ so we can invoke its conditionhood to get
	$$s\frown((\vec{u},h^{i'}))\forces \zeta\dot{\mathcal{E}}_{\beta(j^{i'}_{\epsilon})}\zeta'\leftrightarrow\zeta\dot{\mathcal{E}}_{\beta}\zeta'.$$

	Define $\tilde{s}$ to be equal to $s$ except that the third co-ordinate of $\max \tilde{s}$ should be trivial. We will have $(\epsilon, \tilde{s}, \zeta,\zeta', \dot{y}^i_{\beta(j^i_{\epsilon}),\tilde{s}}(\zeta,\zeta'))$ in $F_3(i)$ and thus in $F_3(i')$, with $\dot{y}^i_{\alpha,\tilde{s}}(\zeta,\zeta')=\dot{y}^i_{\beta(j^i_{\epsilon}),\tilde{s}}(\zeta,\zeta'))=\dot{y}^{i'}_{\beta(j^{i'}_{\epsilon}),\tilde{s}}(\zeta,\zeta'))$. Since $s$ is below $\tilde{s}$ we know
	$$s\frown((\vec{u},h^i))\forces\zeta\dot{\mathcal{E}}_{\alpha}\zeta'\leftrightarrow\dot{y}^i_{\alpha,\tilde{s}}(\zeta,\zeta')$$
	and
	$$s\frown((\vec{u},h^{i'}))\forces\zeta\dot{\mathcal{E}}_{\beta(j^{i'}_{\epsilon})}\zeta'\leftrightarrow\dot{y}^{i'}_{\beta(j^{i'}_{\epsilon}),\tilde{s}}(\zeta,\zeta').$$
	Putting all these results together yields what we want.
\end{proof}

\section{Construction of the model} \label{construction_of_model}

We now perform an iteration of length $\kappa^{+4}$ of preparatory forcings, under the following assumptions. Note that the behaviour of the power-set function given here can be obtained from any model in which $\kappa$ is supercompact whilst preserving supercompactness.

\begin{setting}
	Let $\kappa$ be supercompact, $2^{\kappa}=\kappa^+$, $2^{\kappa^+}=\kappa^{+3}$, $2^{\kappa^{+3}}=\kappa^{+4}$ and $\lambda<\kappa$ regular uncountable.
\end{setting}

\subsection{The forcing construction}

Fix $\langle x_{\epsilon}\mid \epsilon<\kappa^{+3}\rangle$ an enumeration of the branches of the complete binary tree $T$ on $\kappa^+$.

\begin{lemma} \label{propertiesOfP}
	Let $\mathbb{P}$ be a $<\kappa$-support iteration of length $\kappa^{+4}$ of forcings that are either trivial or of the form $\mathbb{Q}_{\vec{u}}$ for some $\vec{u}$. Then $\mathbb{P}$ is $\kappa$-directed closed and has the $\kappa^+$-chain condition. Also $2^{\kappa}=2^{\kappa^+}=\kappa^{+3}$ at intermediate stages, and $2^{\kappa}=\kappa^{+4}$ at the end of the iteration.
\end{lemma}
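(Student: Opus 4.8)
The plan is to prove all four assertions by a single induction on the length $\gamma\leq\kappa^{+4}$ of the iteration, carrying as inductive hypotheses that $\mathbb{P}_\gamma$ is $\kappa$-directed closed, has the strong $\kappa^+$-chain condition in the sense of Lemma \ref{Qstrong}, has size at most $\kappa^{+3}$ when $\gamma<\kappa^{+4}$, and forces both $\kappa^{<\kappa}=\kappa$ and $2^{\kappa^+}=\kappa^{+3}$ (so that the hypotheses needed to define $\mathbb{Q}_{\vec u^\gamma}$ persist in $V[\mathbb{P}_\gamma]$). The $\kappa$-directed closure is the routine part: a $<\kappa$-support iteration of forcings each of which is forced to be $\kappa$-directed closed is $\kappa$-directed closed, since a lower bound for a directed set of $<\kappa$ conditions is built coordinate by coordinate, the union of the $<\kappa$ supports having size $<\kappa$ because $\kappa$ stays regular; and each iterand is forced to be $\kappa$-directed closed because $\mathbb{Q}_{\vec u}$ is $\kappa$-compact by Lemma \ref{Qcompact}, a fact provable in ZFC and hence true in $V[\mathbb{P}_\gamma]$ once the relevant hypotheses hold there. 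Since $\kappa$-closure adds no new $<\kappa$-sequences, $\kappa^{<\kappa}=\kappa$ is preserved at every stage, so $\mathbb{P}_\gamma$ and $\mathbb{P}$ preserve all cardinals $\leq\kappa$.

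For the $\kappa^+$-chain condition I would invoke the standard preservation theorem for $<\kappa$-support iterations: over a model of $\kappa^{<\kappa}=\kappa$, a $<\kappa$-support iteration of forcings each of which is forced to be strongly $\kappa^+$-cc is itself strongly $\kappa^+$-cc (cf.\ \cite{3author}); combined with Lemma \ref{Qstrong} this yields the strong, and hence the plain, $\kappa^+$-cc of $\mathbb{P}$, which in turn preserves $\kappa^+$, $\kappa^{+2}$ and $\kappa^{+3}$. Concretely, given $\langle p_\alpha\mid\alpha<\kappa^+\rangle$ one works with those $\alpha$ of cofinality $\kappa$, at which $\mathrm{supp}(p_\alpha)\cap\alpha$ is bounded below $\alpha$, say by $\delta_\alpha$; a pressing-down argument over a club of closure points codes below $\alpha$ the ``small'' data of $p_\alpha\upharpoonright\delta_\alpha$, a $\Delta$-system argument (using $\kappa^{<\kappa}=\kappa$) makes the tails $\mathrm{supp}(p_\alpha)\setminus R$ above the common root $R$ pairwise disjoint, the residual $\kappa^{+3}$-indexed choices inside the conditions are absorbed into the regressive function exactly as the sets $A^i,B^i,T^i$ are in the proof of Lemma \ref{Qstrong}, and compatibility of the root parts comes from the inductive hypothesis applied to $\mathbb{P}_{\max R+1}$. \textbf{This is the step I expect to be the main obstacle}: it is precisely in order to make this reflection and coding go through that Lemma \ref{Qstrong} is phrased with a regressive witness rather than as a Knaster-type property, and the bookkeeping at limit stages of cofinality less than $\kappa$ has to be handled with some care.

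Finally the cardinal arithmetic. The bound $|\mathbb{P}_\gamma|\leq\kappa^{+3}$ for $\gamma<\kappa^{+4}$ follows by induction: at successors one uses that $\mathbb{P}_\gamma$ is $\kappa^+$-cc and $|\mathbb{Q}_{\vec u^\gamma}|=\kappa^{+3}$ to see there are only $\kappa^{+3}$ nice names for its conditions, and at limits one counts $<\kappa$-supported sequences with $(\kappa^{+3})^{<\kappa}=\kappa^{+3}$, which holds in $V$ by the cardinal arithmetic of the Setting; at the top, $|\mathbb{P}|\leq(\kappa^{+4})^{<\kappa}=\kappa^{+4}$. The power-set upper bounds then come from the usual nice-name count: a $\kappa^+$-cc forcing of size $\kappa^{+3}$ has at most $(\kappa^{+3})^{\kappa}=\kappa^{+3}$ nice names for subsets of $\kappa$ and at most $(\kappa^{+3})^{\kappa^+}=\kappa^{+3}$ for subsets of $\kappa^+$, while a $\kappa^+$-cc forcing of size $\kappa^{+4}$ has at most $(\kappa^{+4})^{\kappa}=\kappa^{+4}$ nice names for subsets of $\kappa$. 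For the matching lower bounds, $2^{\kappa^+}\geq\kappa^{+3}$ is inherited from $V$, as no cardinal $\leq\kappa^{+3}$ is collapsed; and once a non-trivial $\mathbb{Q}_{\vec u}$ has been forced we get $2^{\kappa}\geq\kappa^{+3}$, since for a fixed point $\eta$ of the generic set $a$ the $\kappa^{+3}$ generic functions $f^{\eta}_{\alpha}\upharpoonright\kappa$ for $\alpha<\kappa^{+3}$ are pairwise distinct (it is dense to drive their second coordinates apart at some argument below $\kappa$, and choosing distinct second coordinates leaves clause $(5)$ vacuous) and each codes a new subset of $\kappa$; at the end of the length-$\kappa^{+4}$ iteration the $\kappa^{+4}$ successive generics likewise give $\kappa^{+4}$ distinct new subsets of $\kappa$, so $2^{\kappa}=\kappa^{+4}$. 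The only nicety is that before the first non-trivial step $2^{\kappa}$ is still its value in $V$; in the iteration of Section \ref{construction_of_model} a non-trivial forcing is used from the outset, so ``$2^{\kappa}=\kappa^{+3}$ at intermediate stages'' is to be read with that understanding.
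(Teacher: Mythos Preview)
Your approach matches the paper's closely: $\kappa$-directed closure from Lemma~\ref{Qcompact} plus the usual iteration lemma, a black-box preservation theorem for the chain condition, and an inductive cardinality count for the arithmetic. Your treatment of the arithmetic is in fact more careful than the paper's, which does not discuss the lower bounds at all.

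There is one genuine gap in your chain-condition step. You state the preservation theorem as ``a $<\kappa$-support iteration of forcings each of which is forced to be strongly $\kappa^+$-cc is itself strongly $\kappa^+$-cc'', but this is not known to hold with only that hypothesis. The paper invokes \cite[Theorem~1.2]{5author}, whose hypotheses are that the iterands be $\kappa$-closed, strongly $\kappa^+$-cc, \emph{and countably parallel closed} (any two pointwise-compatible descending $\omega$-sequences have a common lower bound). The paper observes that countable parallel closure follows from the $\kappa$-compactness established in Lemma~\ref{Qcompact}, and then appeals to the theorem. Your sketch of the pressing-down/$\Delta$-system argument is in the right spirit, but at limit stages of countable cofinality one needs exactly this parallel-closure hypothesis to amalgamate the tails, and without it the argument does not go through. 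Since you already have $\kappa$-compactness available, the fix is simply to note this consequence and cite \cite{5author} rather than \cite{3author}.
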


\begin{proof}
	We have from lemma \ref{Qcompact} that the $\mathbb{Q}_{\vec{u}}$-forcings are $\kappa$-compact, hence $\kappa$-directed closed. It is clear that a $<\kappa$-support iteration of such forcings will remain $\kappa$-directed closed.

	A forcing is said to be {\em countably parallel closed} if any two descending $\omega$-sequences in it that are pointwise compatible have a common lower bound. It is clear that this property follows from $\kappa$-compactness. We also have that the component forcings are $\kappa$-closed and have the strong $\kappa^+$-chain condition, so we can invoke \cite[Theorem 1.2]{5author} to deduce that $\mathbb{P}$ has the strong $\kappa^+$-chain condition, and hence the usual $\kappa^+$-cc.

	Call the intermediate stages of the forcing $\mathbb{P}_{\gamma}$ for $\gamma<\kappa^{+4}$. We can prove by induction on $\gamma$ that $|\mathbb{P}_{\gamma}|=\kappa^{+3}$ and $(2^{\kappa^+})^{V^{\mathbb{P}_{\gamma}}}=(2^{\kappa^+}\times\kappa^{+3})^{\kappa}=\kappa^{+3}$. The latter follows from the former by the usual analysis of names together with the $\kappa^+$-cc. Conversely, conditions from $\dot{\mathbb{Q}}_{\vec{u}}$ for $\vec{u}\in V^{\mathbb{P}_{\gamma}}$ are members of $(V_{\kappa}\times 2^{\kappa}\times [\kappa\times\kappa^{+3}]^{<\kappa}\times [\kappa\times\kappa^{+3}\times\kappa]^{<\kappa})^{V^{\mathbb{P}_{\gamma}}}$, where we drop the first co-ordinate of the $f^{\eta}_{\epsilon}(\zeta)$ since it can be deduced from $\zeta$ and $\epsilon$. Thus we can use the $\kappa^+$-cc of $\mathbb{P}_{\gamma}$ and the fact that $(2^{\kappa})^{V^{\mathbb{P}_{\gamma}}}=\kappa^{+3}$ to encode them as member of $\kappa^{+3}$. Hence $|\mathbb{P}_{\gamma+1}|=\kappa^{+3}$ and the induction proceeds. Limit stages for $\gamma<\kappa^{+4}$ are immediate by the $<\kappa$-support, and then at the end we get $2^{\kappa}=\kappa^{+4}$ are desired.
\end{proof}

Define $\mathbb{L}$ to be the Laver preparatory forcing to make $\kappa$ indestructible under $\kappa$-directed closed forcing, as given in \cite{Laver}. After this forcing we still have $2^{\kappa^{+3}}=\kappa^{+4}$ so by a result from \cite{ShelahDiamond} we have a $\lozenge_{\kappa^{+4}}(\kappa^{+4}\cap\cof(\kappa^{++}))$-sequence $\langle S_{\gamma}\mid\gamma<\kappa^{+4}\rangle$. We will perform an iteration $\mathbb{P}$ of the type described above
but before doing so we wish to establish a list in $V^{\mathbb{L}}$ of all possible $\mathbb{P}$-names for subsets of $\kappa$, regardless of the sequence of $\vec{u}_{\gamma}$ we end up using to construct $\mathbb{P}$. We can do so by inductively building a list of possible $\mathbb{P}_{\gamma}$-names for subsets of $\kappa$:
\begin{itemize}
	\item For $\gamma=\delta+1$ a $\mathbb{P}_{\gamma}$-name for a subset of $\kappa$ is a $\mathbb{P}_{\delta}$-name for a $\dot{\mathbb{Q}}_{\delta}$-name for a subset of $\kappa$. Such a $\dot{\mathbb{Q}}_{\delta}$-name is, by the $\kappa^+$-cc, a function from $\kappa$ to $\dot{\mathbb{Q}}_{\delta}\times 2$ and as in the proof of \ref{propertiesOfP} members of $\dot{\mathbb{Q}}_{\delta}$ can be encoded as members of $(2^{\kappa})^{V^{\mathbb{L}*\mathbb{P}_{\delta}}}$. We note that this encoding can be done merely be looking at the shape of possible conditions, without knowledge of $\vec{u}_{\delta}$. The list of possible $\mathbb{P}_{\delta}$-names for subsets of $\kappa$ can now be used to list all the possible $\mathbb{P}_{\gamma}$-names for subsets of $\kappa$.
	\item For $\gamma$ limit the listing is straightforward because of the $\kappa^+$-cc.
\end{itemize}
Members of $\mathcal{U}^{V^{\mathbb{P}*\mathbb{L}}}$ are essentially subsets of $(2^{\kappa})^{V^{\mathbb{P}*\mathbb{L}}}$ so our listing allows us to translate between subsets of $\kappa^{+4}$ in $V^{\mathbb{L}}$ and anything that could possibly turn out to be a $\mathbb{P}$-name for a member of $\mathcal{U}$.

We are now ready to define the $<\kappa$-support iteration $\mathbb{P}=\langle \mathbb{P}_{\gamma},\mathbb{Q}_{\delta}\mid \gamma\leq\kappa^{+4},\delta<\kappa^{+4}\rangle$. At stage $\gamma$, apply the translation just established to $S_{\gamma}\subseteq\kappa^{+4}$. If the result is a $\mathbb{P}$-name for a member of $\mathcal{U}$ that is in fact already a $\mathbb{P}_{\gamma}$-name then instantiate this name in $\mathbb{P}_{\gamma}$ and call the result $\vec{u}^{\gamma}$. Use \ref{propertiesOfP} to fix $\langle \dot{\mathcal{E}}^{\gamma}_{\epsilon}\mid \epsilon<\kappa^{+3}\rangle$ an enumeration of the $\mathbb{R}_{\vec{u}^{\gamma}}$-names for graphs on $\kappa^+$. Define $\mathbb{Q}_{\gamma}=\mathbb{Q}_{\vec{u}^{\gamma}}$, working with respect to the sequences $\langle x_{\epsilon}\mid \epsilon<\kappa^{+3}\rangle$ and $\langle \dot{\mathcal{E}}^{\gamma}_{\epsilon}\mid \epsilon<\kappa^{+3}\rangle$. Otherwise take $\mathbb{Q}_{\gamma}$ to be the trivial forcing.

Let $G*H$ be $\mathbb{L}*\mathbb{P}$-generic. If $\mathbb{Q}_{\gamma}$ is non-trivial then $H(\gamma)$ will add a potential upper part which we call $h^{\gamma}$, and a sequence of functions which we call $F^{\gamma}=\langle F^{\gamma,\eta}_{\alpha}\mid \eta =\kappa(\vec{w}), \vec{w}\in\dom h^{\gamma}, \alpha<\kappa^{+3}\rangle$.

\begin{lemma} \label{staty1}
	Let $\vec{u}\in\mathcal{U}^{V[G][H]}$. Then in $V[G][H]$ there is a stationary set of $\gamma<\kappa^{+4}$ of cofinality $\kappa^{++}$ such that $\vec{u}^{\gamma}$ is the restriction of $\vec{u}$ to $V[G][H\upharpoonright \gamma]$ and $\mathbb{Q}_\gamma=\mathbb{Q}_{\vec{u}^{\gamma}}$.
\end{lemma}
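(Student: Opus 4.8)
The strategy is a standard reflection argument: the diamond sequence $\langle S_\gamma\mid\gamma<\kappa^{+4}\rangle$ guesses, on a stationary set, the "translation" of any name we might care about, and we intersect that with a club of stages at which the relevant objects are already present. Fix $\vec u\in\mathcal U^{V[G][H]}$. The first step is to observe that $\vec u$ is essentially a subset of $(2^\kappa)^{V[G][H]}$, hence — via the translation established before the statement — corresponds to a subset $A$ of $\kappa^{+4}$ lying in $V^{\mathbb L}=V[G]$, namely a $\mathbb P$-name for a member of $\mathcal U$. By $\lozenge_{\kappa^{+4}}(\kappa^{+4}\cap\cof(\kappa^{++}))$ applied in $V[G]$, the set $S:=\{\gamma<\kappa^{+4}\mid \cf\gamma=\kappa^{++},\ S_\gamma=A\cap\gamma\}$ is stationary in $V[G]$. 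Since $\mathbb P$ has the $\kappa^+$-cc (Lemma \ref{propertiesOfP}) and $\kappa^{++}>\kappa^+$, this $S$ remains stationary in $V[G][H]$, and cofinalities $\kappa^{++}$ are preserved.

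The second step is to cut $S$ down by a club so that at each surviving stage $\gamma$ the guess $S_\gamma=A\cap\gamma$ genuinely "is" the name $\vec u$ restricted to $\mathbb P_\gamma$. Here I would use that $\mathbb P$ is a $<\kappa$-support iteration with the $\kappa^+$-cc, so every $\mathbb P$-name for a subset of $\kappa$ — in particular every $\mathbb P$-name for a member of $\mathcal U$, which is coded by $\kappa^+$-many antichains each of size $\le\kappa$ — is, by a standard Löwenheim–Skolem / closure argument, already a $\mathbb P_\gamma$-name for a club of $\gamma$ of cofinality $\ge\kappa^+$. (One picks a continuous increasing chain of elementary submodels of a large $H(\theta)$ containing the name for $\vec u$, the iteration, and the translation machinery; the trace on $\kappa^{+4}$ is the desired club $C$.) For $\gamma\in S\cap C$ of cofinality $\kappa^{++}$ we then have: $A\cap\gamma=S_\gamma$, and $A\cap\gamma$ translates back to exactly the $\mathbb P_\gamma$-name obtained by restricting the name for $\vec u$. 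Consequently, when the iteration reaches stage $\gamma$ and applies the translation to $S_\gamma$, it recovers precisely this restricted name, recognizes it as a $\mathbb P_\gamma$-name for a member of $\mathcal U$, instantiates it as $\vec u^\gamma$, and sets $\mathbb Q_\gamma=\mathbb Q_{\vec u^\gamma}$. That $\vec u^\gamma$ is the restriction of $\vec u$ to $V[G][H\upharpoonright\gamma]$ then follows because instantiating a restricted name in $\mathbb P_\gamma$ by $H\upharpoonright\gamma$ gives the same object as instantiating the full name in $\mathbb P$ by $H$ and then intersecting with $V[G][H\upharpoonright\gamma]$ — which is exactly what "restriction of an ultrafilter sequence to an inner model" means here, since $\vec u^\gamma$ lives on $V_\kappa$ and $\mathbb P$ preserves $V_\kappa$.

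The third and final step is just bookkeeping: $S\cap C$ is the intersection of a stationary set with a club in $V[G][H]$, hence stationary, all of its elements have cofinality $\kappa^{++}$, and we have just checked that $\vec u^\gamma$ and $\mathbb Q_\gamma=\mathbb Q_{\vec u^\gamma}$ behave as required on this set. So $S\cap C$ witnesses the lemma.

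**Main obstacle.** The delicate point is the interaction between the translation (which was set up in $V[G]$ "without knowledge of $\vec u_\gamma$") and the actual instantiation at stage $\gamma$: one must check that the name recovered from $S_\gamma=A\cap\gamma$ really is a $\mathbb P_\gamma$-name and not merely a $\mathbb P$-name that happens to agree with $\vec u$ on a $\mathbb P_\gamma$-decidable part. This is precisely what the club $C$ is for — it guarantees that at stage $\gamma$ the name for $\vec u$ has "stabilized", i.e. all its antichains live inside $\mathbb P_\gamma$ — and the verification that such $\gamma$ form a club is the step requiring genuine care, relying on the $\kappa^+$-cc together with the fact that a member of $\mathcal U$ is coded by only $\kappa^+$-many bits of information, each decided by a small antichain.
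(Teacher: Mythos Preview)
Your overall architecture (diamond gives a stationary set, intersect with a club) matches the paper, but the execution has two genuine gaps.

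First, your size computation is wrong. You write that ``a member of $\mathcal U$ is coded by only $\kappa^+$-many bits of information, each decided by a small antichain'', and use this to argue that the $\mathbb P$-name for $\vec u$ stabilises as a $\mathbb P_\gamma$-name on a club. But each $u_i$ is an ultrafilter on $V_\kappa$, hence a subset of $\mathcal P(V_\kappa)$, which in $V[G][H]$ has size $2^\kappa=\kappa^{+4}$. So $\vec u$ is coded by $\kappa^{+4}$-many bits and the name for $\vec u$ \emph{never} stabilises at any $\gamma<\kappa^{+4}$; your L\"owenheim--Skolem argument cannot produce such a club. What actually happens is different: the translation was arranged so that on a club of $\gamma$ the ordinals below $\gamma$ list exactly the subsets of $\kappa$ lying in $V[G][H\upharpoonright\gamma]$; hence $A\cap\gamma$ encodes not the full name for $\vec u$ but the \emph{restriction} of $\vec u$ to $V[G][H\upharpoonright\gamma]$, which is a genuinely smaller object. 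You gesture at this when you write ``$A\cap\gamma$ translates back to \ldots\ the restricted name'', but the reason you give for it is not the right one.

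Second, and more seriously, you never verify that this restriction $\vec u^\gamma$ belongs to $\mathcal U^{V[G][H\upharpoonright\gamma]}$. At stage $\gamma$ the iteration only sets $\mathbb Q_\gamma=\mathbb Q_{\vec u^\gamma}$ if the guessed object is (forced to be) in $\mathcal U$, and membership in $\mathcal U$ is not automatic for restrictions of ultrafilter sequences: the clauses of the definition (normality, capturing, etc.) must be re-checked in the smaller model. The paper handles this by observing that the defining properties of $\mathcal U$ are $\Pi^1_2$ over $V_\kappa$, so they reflect down to $V[G][H\upharpoonright\gamma]$ on a club of $\gamma$. Without this step you cannot conclude $\mathbb Q_\gamma=\mathbb Q_{\vec u^\gamma}$, which is the content of the lemma.
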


\begin{proof}
	There is a club of points $\gamma$ in $\kappa^{+4}$ where the members of $(2^{\kappa})^{V[G][H]}$ listed as above by ordinals below $\gamma$ are exactly $\bigcup_{\delta<\gamma}(2^{\kappa})^{V[G][H\upharpoonright\delta]}$. For such $\gamma$ of cofinality at least $\kappa^+$ the $\kappa^+$-cc of $\mathbb{P}_{\gamma}$ makes this equal to $(2^{\kappa})^{V[G][H\upharpoonright\gamma]}$. Take a $\mathbb{P}$-name for $\vec{u}$ and use the above translation to convert it into a subset of $\kappa^{+4}$; the diamond sequence then gives us a stationary set of $\gamma<\kappa^{+4}$ of cofinality $\kappa^{++}$ such that $\vec{u}^{\gamma}$ is given by restricting $\vec{u}$ to subsets of $\kappa$ that belong to $V[G][H\upharpoonright\gamma]$. Now all the properties in the definition of $\mathcal{U}$ are $\Pi^1_2$ over $V_{\kappa}$, so there a club of $\gamma$ where the restriction of $\vec{u}$ to $V[G][H\upharpoonright\gamma]$ is a member of $\mathcal{U}^{V[G][H\upharpoonright\gamma]}$. Combining these two facts gives a stationary set of $\gamma$ where $\vec{u}$ restricts to $\vec{u}^{\gamma}$ and $\mathbb{Q}_\gamma=\mathbb{Q}_{\vec{u}^{\gamma}}$.
\end{proof}

Observe that by the properties of the Laver preparation and the fact that $\mathbb{P}$ is $\kappa$-directed closed (by lemma \ref{propertiesOfP}) we can take $j:V\rightarrow M$ witnessing that $\kappa$ is highly supercompact and $j(\mathbb{L})(\kappa)=\mathbb{P}$, and then find a master condition allowing us to extend $j$ to an embedding $j:V[G]\rightarrow M[G][H][I]$ where $I$ is generic for a highly closed forcing. We can then use the methods of section 2 to derive $\vec{u}\in\mathcal{U}^{V[G][H][I]}$ from $j$, and observe by the closure that in fact $\vec{u}\in V[G][H]$. It will then be possible to apply the above lemma to $\vec{u}$. However we will actually need to be more careful than this in the construction of our master condition, because we want to ensure that $h^{\gamma}\in \mathcal{F}_{\vec{u}}$ stationarily-often.

\begin{lemma}
	There is $\vec{u}\in\mathcal{U}^{V[G][H]}$ such that in $V[G][H]$ there is a stationary set of $\gamma<\kappa^{+4}$ of cofinality $\kappa^{++}$ such that $\vec{u}^{\gamma}$ is the restriction of $\vec{u}$ to $V[G][H\upharpoonright \gamma]$, $\mathbb{Q}_\gamma=\mathbb{Q}_{\vec{u}^{\gamma}}$, and $h^{\gamma} \in \mathcal{F}_{\vec{u}}$.
\end{lemma}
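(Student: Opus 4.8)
The plan is to obtain $\vec{u}$ from a lift of $j$ in which the master conditions carry extra information, chosen with knowledge of $H$ (and hence of the $h^{\gamma}$).

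Recall the setup from the remarks before the lemma: we fix $j:V\to M$ highly supercompact with $j(\mathbb{L})(\kappa)=\mathbb{P}$, lift it using $G$, $H$, and a generic for the highly closed tail of $j(\mathbb{L})$, and (by the methods of Section \ref{ultrafilter_sequences}) read $\vec{u}=\langle\kappa,u_i,\mathcal{F}_i\mid i<\lambda\rangle$, together with its auxiliary supercompact sequence $\langle z,u^{*}_i,H^{*}_i\mid i<\lambda\rangle$, off the resulting embedding $j^{+}$. For the $u_i$ to be genuine ultrafilters on $V_{\kappa}$ in $V[G][H]$ the lift must in fact be taken over $\mathbb{P}$ as well; this requires, at each non-trivial stage $\gamma<\kappa^{+4}$, a master condition below $j``H(\gamma)$ in the stage-$j(\gamma)$ iterand $\mathbb{Q}_{j(\vec{u}^{\gamma})}$ of $j(\mathbb{P})$. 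Such master conditions exist by the $j(\kappa)$-directed closure of $\mathbb{Q}_{j(\vec{u}^{\gamma})}$ and the closure of the target model, and—since $j$ fixes the small objects out of which conditions of $\mathbb{Q}_{\vec{u}^{\gamma}}$ are built—the $\mathbb{M}$-coordinate $c^{*}_{\gamma}$ of any such master condition automatically extends $h^{\gamma}=\bigcup\{c\mid(c,h,t,f)\in H(\gamma)\}$.

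The key point is that, for $\gamma\in S$ (the stationary set that Lemma \ref{staty1} will provide for $\vec{u}$, where $\vec{u}^{\gamma}=\vec{u}\upharpoonright V[G][H\upharpoonright\gamma]$ so that $\mathcal{F}_{\vec{u}^{\gamma},i}\subseteq\mathcal{F}_i$), we may enlarge $\dom c^{*}_{\gamma}$—beyond what $j``H(\gamma)$ forces—by the $\lambda$-many sequences $\vec{u}\upharpoonright i$ ($i<\lambda$). Each has $\kappa(\vec{u}\upharpoonright i)=\kappa$, so it lies above everything already in $\dom c^{*}_{\gamma}$ and, not being in $V_{\kappa}$, leaves $c^{*}_{\gamma}\upharpoonright\kappa$ (hence $h^{\gamma}$) unchanged. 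Put $q^{*}_i:=j^{+}(H^{*}_i)(\vec{u}^{*}\upharpoonright i)\in\mathbb{B}(\kappa,j(\kappa))^{N}-\{0\}$ and give $\vec{u}\upharpoonright i$ the value $q^{*}_i$. This respects the ordering below $j``H(\gamma)$: for any upper part $h_0$ of $\mathbb{R}_{\vec{u}^{\gamma}}$ appearing in $H(\gamma)$ we have $(h_0)_i\in\mathcal{F}_i$, hence $j^{+}((h_0)_i)(\vec{u}\upharpoonright i)\ge q^{*}_i$, so $q^{*}_i$ is indeed below $j^{+}(h_0)(\vec{u}\upharpoonright i)$. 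It also respects conditionhood in $\mathbb{M}_{j(\vec{u}^{\gamma})}$: the relevant requirement $c^{*}_{\gamma}\upharpoonright\kappa\in\mathcal{F}^{N}_{\vec{u}\upharpoonright i}$ just says that the slices of $h^{\gamma}$ below level $i$ already belong to the corresponding $\mathcal{F}_j$, which is available from the earlier levels of the construction. With this choice $j^{+}(h^{\gamma})\supseteq c^{*}_{\gamma}$, so $\vec{u}\upharpoonright i\in\dom j^{+}(h^{\gamma})$—whence $\dom h^{\gamma}_i\in u_i$—and $j^{+}(h^{\gamma})(\vec{u}\upharpoonright i)=q^{*}_i$. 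Tracing through the definition of $\Fil(H^{*}_i)$ exactly as in the discussion preceding it shows that for $h_i$ with $\dom h_i\in u_i$ one has $h_i\in\mathcal{F}_i$ iff $j^{+}(h_i)(\vec{u}\upharpoonright i)\ge q^{*}_i$; hence $h^{\gamma}_i\in\mathcal{F}_i$ for every $i<\lambda$, i.e. $h^{\gamma}\in\mathcal{F}_{\vec{u}}$.

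I would organise this as a recursion on $i<\lambda$ interleaving the Section \ref{ultrafilter_sequences} construction of $u^{*}_i,H^{*}_i,u_i,\mathcal{F}_i$ (the only freedom there, the choice of $H^{*}_i$ subject to $\Fil(H^{*}_i)$ being an ultrafilter, is exactly as before) with the determination of the coordinates $c^{*}_{\gamma}(\vec{u}\upharpoonright i)$; the inductive hypothesis at level $i$—that $h^{\gamma}_j\in\mathcal{F}_j$ for $j<i$ and all relevant $\gamma$—both licenses the domain-enlargement at level $i$ and is propagated by it. The $\vec{u}$ so produced still lies in $\mathcal{U}$, so Lemma \ref{staty1} applies and yields a stationary set $S$ of $\gamma$ of cofinality $\kappa^{++}$ with $\vec{u}^{\gamma}=\vec{u}\upharpoonright V[G][H\upharpoonright\gamma]$ and $\mathbb{Q}_{\gamma}=\mathbb{Q}_{\vec{u}^{\gamma}}$; for these $\gamma$ the iterand is of the required form, so $h^{\gamma}\in\mathcal{F}_{\vec{u}}$ by the above, and $S$ is the set we want. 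The main obstacle is keeping the recursion coherent—the master conditions, the $H^{*}_i$, and the reading-off of $\vec{u}$ are all entangled through $j^{+}$, so one has to check that the enlargements of the $c^{*}_{\gamma}$ can be made uniformly enough that the inductive hypothesis really is in hand at each level; this is where the high closure of $N$ and of the collapses appearing in the relevant ultrapowers is used.
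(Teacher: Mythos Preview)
Your overall strategy is correct—the key is indeed to build master conditions for the lift of $j$ over $j(\mathbb{P})$ whose first $\mathbb{M}$-coordinate is large enough that the initial segments $\vec{u}\upharpoonright i$ of the eventually-derived sequence fall in its domain, from which $h^{\gamma}\in\mathcal{F}_{\vec{u}}$ follows essentially as you describe. But the proposal has a genuine circularity that you flag as the ``main obstacle'' and do not actually resolve. You want to place the sequences $\vec{u}\upharpoonright i$ explicitly into $\dom c^{*}_{\gamma}$ with value $q^{*}_i=j^{+}(H^{*}_i)(\vec{u}^{*}\upharpoonright i)$; but $\vec{u}$, $\vec{u}^{*}$, and $H^{*}_i$ are all derived from the fully lifted embedding $j^{+}$, which exists only after the master conditions have been built and a generic for $j(\mathbb{P})$ taken below them. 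The interleaved recursion on $i<\lambda$ you sketch does not break this cycle: deriving $u_i$ via Section~\ref{ultrafilter_sequences} requires $j^{+}$ already acting on all of $V[G][H]$, which requires the full master condition, which in your scheme requires $\vec{u}\upharpoonright i$. You also restrict the enlargement to $\gamma\in S$, but $S$ itself is produced by Lemma~\ref{staty1} \emph{from} $\vec{u}$.

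The paper breaks the cycle by defining the master condition at every non-trivial stage $\gamma$ without reference to $\vec{u}$: rather than inserting only $\vec{u}\upharpoonright i$ into $\dom\tilde{c}^{\gamma}$, it inserts \emph{every} $\vec{w}\in\mathcal{U}^{V[G][H]}$ with $\kappa(\vec{w})=\kappa$ satisfying the $\vec{u}^{\gamma}$-expressible conditions that (i) $\vec{w}\cap V[G][H\upharpoonright\gamma]=\vec{u}^{\gamma}\upharpoonright i$ for some $i$, (ii) $h^{\gamma}_{<i}\in\mathcal{F}_{\vec{w}}$, (iii) $\vec{w}\in\dom j(h)$ for every $(c,h,t,f)\in H(\gamma)$, and (iv) $\bigwedge_{(c,h,t,f)\in H(\gamma)} j(h)(\vec{w})\neq 0$; the assigned value is this meet. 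These conditions live in $M[G][H][I]$ and are available when the master condition is built. Only afterward---once the master condition is fixed, the generic taken, $j^{+}$ lifted, and $\vec{u}$ derived---does one verify by induction on $i$ (via the computation you give, showing $j(h)(\vec{u}\upharpoonright i)\geq j(K^{*}_i)(\vec{u}^{*}\upharpoonright i)$ for each $h$ from $H(\gamma)$) that $\vec{u}\upharpoonright i$ satisfies (i)--(iv), hence lies in $\dom\tilde{c}^{\gamma}$, whence $h^{\gamma}_i\in\mathcal{F}_i$. You should also be aware that checking the enlarged $q_{\gamma}$ satisfies clause~(5) of the definition of $\mathbb{Q}_{j(\vec{u}^{\gamma})}$ is non-trivial; the paper devotes a separate claim to it, handling the case $\kappa(\max s)=\kappa$ by a density argument.
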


\begin{proof}
	Take $\mu$ large and $j:V\rightarrow M$ witnessing that $\kappa$ is $\mu$-supercompact with $j(\mathbb{L})(\kappa)=\mathbb{P}$ and $j(\mathbb{L})(\alpha)$ trivial for $\alpha\in(\kappa,\mu)$. We have $j$ fixing $G$ pointwise so we can extend $j$ to $j:V[G]\rightarrow M[G][H][I]$ where $I$ is some $j(\mathbb{L})/(\mathbb{L}*\mathbb{P})$-generic over $M$. We will now build a master condition in $j(\mathbb{P})$ by inductively defining a descending sequence $p_{\gamma}\in j(\mathbb{P}_{\gamma})$ for $\gamma<\kappa^{+4}$ such that $\forces p_{\gamma}\leq j``(H\upharpoonright\gamma)$.

	For $\gamma$ limit take $p_{\gamma}$ to be any lower bound of $\langle p_{\delta}\mid \delta<\gamma\rangle$, using that the forcing is highly closed. We will have $p_{\gamma+1}:=p_{\gamma}\frown(q_{\gamma})$ for $q_{\gamma}$ to be defined. We can force below $p_{\gamma}$ to lift $j$ to $j:V[G][H\upharpoonright\gamma]\rightarrow M[G][H][I][j(H\upharpoonright\gamma)]$. If $\mathbb{Q_{\gamma}}$ is the trivial forcing then so is $j(\mathbb{Q}_{\gamma})$ and we take $q_{\gamma}$ to be its unique member. Otherwise we set $q_{\gamma}=(\tilde{c}^{\gamma},\tilde{h}^{\gamma},\tilde{t}^{\gamma},\tilde{f}^{\gamma})$ with definitions as follows.
	$$\dom \tilde{c}^{\gamma} :=\dom h^{\gamma}\cup \left\{\vec{w} \in \mathcal{U}^{V[G][H]} \middle| \begin{array}{l}
		\exists i<\lambda: \vec{w}\cap V[G][H\upharpoonright\gamma]=\vec{u}^{\gamma}\upharpoonright i, h^{\gamma}_{<i}\in \mathcal{F}_{\vec{w}},\\
		\forall(c,h,t,f)\in H(\gamma): \vec{w}\in\dom j(h), \\
		\bigwedge_{(c,h,t,f)\in H(\gamma)}j(h)(\vec{w})\neq 0
	\end{array}\right\},$$
	with $\tilde{c}^{\gamma}(\vec{w}):=h^{\gamma}(\vec{w})$ for $\vec{w}\in\dom h^{\gamma}$, and
	$$\tilde{c}^{\gamma}(\vec{w}):=\bigwedge_{(c,h,t,f)\in H(\gamma)} j(h)(\vec{w})$$
	for $\vec{w}\in\dom\tilde{c}^{\gamma}-\dom h^{\gamma}$. We set
	$$\tilde{h}^{\gamma}:=\bigwedge_{(c,h,t,f)\in H(\gamma)} j(h),$$
	$\tilde{t}^{\gamma}:=(a^{(\tilde{c}^{\gamma},\tilde{h}^{\gamma})}\cap\kappa) \times j``\kappa^{+3}$, and $\dom (\tilde{f}^{\gamma})^{\eta}_{j(\alpha)}=j``\kappa^+$ and $(\tilde{f}^{\gamma})^{\eta}_{j(\alpha)}(j(\zeta))=j(F^{\gamma,\eta}_{\alpha}(\zeta))$ for all $\eta \in a^{(\tilde{c}^{\gamma},\tilde{h}^{\gamma})}\cap\kappa$, $\alpha\in\kappa^{+3}$ and $\zeta\in\kappa^+$.

	\begin{claim}
		$(\tilde{c}^{\gamma},\tilde{h}^{\gamma},\tilde{t}^{\gamma},\tilde{f}^{\gamma})\in j(\mathbb{Q}_{\gamma})$.
	\end{claim}

	\begin{proof}
		The requirement that $h^{\gamma}_{<i}\in \mathcal{F}_{\vec{w}}$ for those $\vec{w}\in \dom\tilde{c}^{\gamma}$ with $\kappa(\vec{w})=\kappa$ ensures that $\tilde{c}^{\gamma}$ is an acceptable first co-ordinate for a condition in $j(\mathbb{M}_{\vec{u}^{\gamma}})$. The first four clauses of the definition then follow from the fact that $j(\kappa)$ is large. For the fifth we are given $\eta \in a^{(\tilde{c}^{\gamma},\tilde{h}^{\gamma})}\cap\kappa$, $\alpha,\beta\in\kappa^{+3}$, $s$ a lower part for $j(\mathbb{R}_{\vec{u}^{\gamma}})$ that is harmonious with $\tilde{c}^{\gamma}$ past $\eta$, and $\zeta,\zeta'\in\kappa^+$ such that $(\tilde{f}^{\gamma})^{\eta}_{j(\alpha)}(j(\zeta))=(\tilde{f}^{\gamma})^{\eta}_{j(\beta)}(j(\zeta))\neq (\tilde{f}^{\gamma})^{\eta}_{j(\alpha)}(j(\zeta'))=(\tilde{f}^{\gamma})^{\eta}_{j(\beta)}(j(\zeta'))$. By elementarity this last assertion is equivalent to $F^{\gamma,\eta}_{\alpha}(\zeta)=F^{\gamma,\eta}_{\beta}(\zeta)\neq F^{\gamma,\eta}_{\alpha}(\zeta')=F^{\gamma,\eta}_{\beta}(\zeta')$.

		If $\kappa(\max s)<\kappa$ then use Lemma \ref{squareOff} to take a condition $(c,h,t,f)\in H(\gamma)$ with $\eta\in a^{(c,h)}\cap\sup a^{(c,h)}$, $\alpha,\beta \in t^{\eta}$, $s$ harmonious with $c$ past $\eta$, and $\zeta,\zeta'\in \dom f^{\eta}_{\alpha}\cap \dom f^{\eta}_{\beta}$. Then
		$f^{\eta}_{\alpha}(\zeta)=f^{\eta}_{\beta}(\zeta)\neq f^{\eta}_{\alpha}(\zeta')=f^{\eta}_{\beta}(\zeta')$ so we get
		$$s\frown((\vec{u}^{\gamma},h))\forces \zeta\dot{\mathcal{E}}^{\gamma}_{\alpha}\zeta' \leftrightarrow \zeta\dot{\mathcal{E}}^{\gamma}_{\beta}\zeta'.$$
		Now $s\frown((j(\vec{u}^{\gamma}),\tilde{h}^{\gamma}))\leq s\frown((j(\vec{u}^{\gamma}),j(h)))$ so together with elementarity we obtain
		$$s\frown((j(\vec{u}^{\gamma}),\tilde{h}^{\gamma})) \forces j(\zeta) j(\dot{\mathcal{E}}^{\gamma}_{\alpha})j(\zeta') \leftrightarrow j(\zeta) j(\dot{\mathcal{E}}^{\gamma}_{\beta})j(\zeta')$$
		as required.

		Otherwise we can write $s$ as $s_1\frown((\vec{w}, d, p))$ for some $\vec{w}\in\dom \tilde{c}^{\gamma}-\dom h^{\gamma}$. We will show that $s\frown((j(\vec{u}^{\gamma}),\tilde{h}^{\gamma}))$ forces what we want by a density argument. Suppose we are given an extension $s^*\frown((j(\vec{u}^{\gamma}),h^*))$; express $s^*$ as $s^*_1\frown((\vec{w}, d^*,p^*))\frown s^*_2$. Lemma \ref{openHarmony} tells us that $s^*_1\frown((\vec{w}, d^*,p^*))$ remains harmonious with $\tilde{c}^{\gamma}$ past $\eta$, so we can use Lemma \ref{squareOff} to take $(c,h,t,f)\in H(\gamma)$ with $\eta\in a^{(c,h)}\cap\sup a^{(c,h)}$, $\alpha,\beta \in t^{\eta}$, $s^*_1$ harmonious with $c$ past $\eta$, and $\zeta,\zeta'\in \dom f^{\eta}_{\alpha}\cap \dom f^{\eta}_{\beta}$. As before the conditionhood of $(c,h,t,f)$ followed by the elementarity of $j$ give that
		$$s^*_1\frown((j(\vec{u}^{\gamma}),j(h))) \forces j(\zeta) j(\dot{\mathcal{E}}^{\gamma}_{\alpha})j(\zeta') \leftrightarrow j(\zeta) j(\dot{\mathcal{E}}^{\gamma}_{\beta})j(\zeta').$$
		The harmoniousness of $s$ with $\tilde{c}^{\gamma}$ tells us that $d^*\leq d\leq h^{\gamma}\leq c\cup h$ so we can refine $s^*$ to $s^{**}$ by strengthening $d^*$ to $d^{**}\leq h =j(h)\upharpoonright\kappa$. We also have $\vec{w}\in\dom\tilde{c}^{\gamma}\subseteq\dom j(h)$ and $p^*\leq p \leq \tilde{c}^{\gamma}(\vec{w})\leq j(h)(\vec{w})$, so $(\vec{w},d^{**},p^*)$ is addable below $(j(\vec{u}^{\gamma}),j(h))$. So is  $s^*_2$ (because it is addable below $\tilde{h}^{\gamma}\leq j(h)$) yielding
		$$s^{**}\frown((j(\vec{u}^{\gamma}),h^*)) \forces j(\zeta) j(\dot{\mathcal{E}}^{\gamma}_{\alpha})j(\zeta') \leftrightarrow j(\zeta) j(\dot{\mathcal{E}}^{\gamma}_{\beta})j(\zeta').$$
		And $s^{**}\frown((j(\vec{u}^{\gamma}),h^*))$ is also below $s^*\frown((j(\vec{u}^{\gamma}),h^*))$, concluding the proof of the claim.
	\end{proof}

	It is immediate that $\forces q_{\gamma}\leq j``H(\gamma)$ so $\forces p_{\gamma}\leq j``(H\upharpoonright\gamma)$; this finishes the inductive definition. Take $p$ a lower bound of the sequence of $p_{\gamma}$ as our master condition and force below it to obtain a $j(\mathbb{P})$-generic filter. Then we can extend $j$ to $j:V[G][H]\rightarrow M[G][H][I][j(H)]$, where $j(H)$ is the filter for $j(\mathbb{P})$ just obtained. This embedding will witness a high degree of generic supercompactness so as in section 2 we can in $V[G][H][I][j(H)]$ derive an ultrafilter sequence $\vec{u}$ from it, and show $\vec{u}\in\mathcal{U}$; we also get the associated supercompact ultrafilter sequence $\vec{u}^*=\langle z, u^*_i, K^*_i \mid i<\lambda\rangle$ and the associated projection $\pi$. The $\mu$-closure of the $j(\mathbb{L})/(\mathbb{L}*\mathbb{P})*j(\mathbb{P})$-forcing gives us that $\vec{u}\in V[G][H]$. Then we can invoke Lemma \ref{staty1} to see that there are stationarily-many $\gamma<\kappa^{+4}$ where $\vec{u}$ restricts to $\vec{u}^{\gamma}$ and $\mathbb{Q}_{\gamma}=\mathbb{Q}_{\vec{u}^{\gamma}}$. We wish to show that $h^{\gamma}\in \mathcal{F}_{\vec{u}}$ for such $\gamma$ and will do so by proving by induction on $i$ that $h^{\gamma}_i\in \mathcal{F}_{\vec{u},i}$.

	Given any $(c,h,t,f)\in H(\gamma)$ we have $h_i\in \mathcal{F}_{\vec{u}^{\gamma},i}\subseteq \mathcal{F}_{\vec{u},i}$ so $\dom h_i\in u_i$, which gives
	\begin{align*}
		& \pi^{-1}` ` \dom h_i \in u^*_i \\
		\Rightarrow& \forall_{u^*_i}\vec{w}^* \pi(\vec{w}^*)\in\dom h_i \\
		\Rightarrow& j(\pi)(\vec{u}^*\upharpoonright i)\in\dom j(h_i) \\
		\Rightarrow& \vec{u}\upharpoonright i \in\dom j(h_i)
	\end{align*}
	Also by definition of $\Fil(K^*_i)$ there is an $A\in u^*_i$ with $h_i\geq b(K^*_i,A)$, from which
	\begin{align*}
		& \forall\vec{w}\in\dom h_i: h_i(\vec{w}) \geq \bigvee\{K^*_i(\vec{w}^*)\mid \vec{w}^*\in A, \pi(\vec{w}^*)=\vec{w}\}\\
		\Rightarrow& \forall\vec{w}^*\in A: h_i(\pi(\vec{w}^*)) \geq K^*_i(\vec{w}^*)\\
		\Rightarrow& j(h_i)(\vec{u}\upharpoonright i) \geq j(K^*_i)(\vec{u}^*\upharpoonright i),
	\end{align*}
	using that $A\in u^*_i$ and $j(\pi)(\vec{u}^*\upharpoonright i)=\vec{u}\upharpoonright i$.

	Therefore $j(K^*_i)(\vec{u}^*\upharpoonright i)$ witnesses that $\bigwedge_{(c,h,t,f)\in H(\gamma)}j(h)(\vec{u}\upharpoonright i)\neq 0$. By the induction hypothesis we have $h^{\gamma}_{<i}\in \mathcal{F}_{\vec{u}\upharpoonright i}$ so we established have all of the requirements necessary for $\vec{u}\upharpoonright i\in\dom\tilde{c}^{\gamma}$. We have also shown that $\tilde{c}^{\gamma}(\vec{u}\upharpoonright i)\geq j(K^*_i)(\vec{u}^*\upharpoonright i)$. Now forcing below $p_{\gamma+1}$ ensures that $\tilde{c}^{\gamma}$ is an initial segment of $j(h^{\gamma})$ so we have
	\begin{align*}
		& j(h^{\gamma})(\vec{u}^\upharpoonright i)\geq j(K^*)(\vec{u}^*\upharpoonright i)\\
		\Rightarrow& \forall\vec{w}^*\in B: h^{\gamma}(\pi(\vec{w}^*))\geq K^*(\vec{w}^*) \mbox{ for some } B\in u^*_i\\
		\Rightarrow& \forall\vec{w}\in\pi ` `B: h^{\gamma}_i(\vec{w})\geq \bigvee\{K^*_i(\vec{w}^*)\mid \vec{w}^*\in B, \pi(\vec{w}^*)=\vec{w}\}\\
		\Rightarrow& h^{\gamma}_i\geq b(K^*_i,B)
	\end{align*}
	which gives $h^{\gamma}_i\in \mathcal{F}_{\vec{u},i}$ as desired.
\end{proof}

Fix a $\vec{u}$ and $S\subseteq\kappa^{+4}$ stationary as given by this lemma. Take $J$ that is $\mathbb{R}_{\vec{u}}$-generic over $V[G][H]$, forcing below an upper part whose domain is made up of sequences of length less than $\lambda$, so that $J$ generates a generic sequence $\langle \vec{w}_{\alpha}, g_{\alpha}\mid\alpha<\lambda\rangle$ as discussed in sub-section \ref{defineR}. For any $\gamma\in S$ we observe by the characterisation of genericity in Lemma \ref{characteriseGenericity} that $J$ is geometric for $\mathbb{R}_{\vec{u}}$ and hence $\mathbb{R}_{\vec{u}^{\gamma}}$, so it is generic for $\mathbb{R}_{\vec{u}^{\gamma}}$ and we can form the extension $V[G][H\upharpoonright \gamma][J]$.

\subsection{The jointly universal family}

We now fix some $\gamma \in S$ and define a graph $\mathcal{E}^{\gamma}$ on $T\times\kappa$ that is intended to be universal with respect to the graphs in $V[G][H\upharpoonright \gamma][J]$. We have $h^{\gamma}\in \mathcal{F}_{\vec{u}}$ so start by fixing $\beta<\lambda$ such that $\lh\vec{w}_{\beta}=0$ and for all $\alpha>\beta$ we have $\vec{w}_{\alpha} \in \dom h^{\gamma}$ and $h^{\gamma}(\vec{w}_{\alpha})\in g_{\alpha}$. Define $\eta:=\kappa(\vec{w}_{\beta})$. Define $\mathcal{E}^{\gamma}_{\epsilon}$ to be the realisation of $\dot{\mathcal{E}}^{\gamma}_{\epsilon}$ in $V[G][H\upharpoonright \gamma][J]$. For $z, z' \in T\times\kappa$ we define $z \mathcal{E}^{\gamma} z'$ if there exist $\epsilon$, $\zeta$ and $\zeta'$ such that $F^{\gamma,\eta}_{\epsilon}(\zeta)=z$ and $F^{\gamma,\eta}_{\epsilon}(\zeta')=z'$ with $\zeta \mathcal{E}^{\gamma}_{\epsilon} \zeta'$ in $V[G][H\upharpoonright \gamma][J]$.

\begin{lemma}
	Let $\gamma\in S$, $\eta$ as above and $\epsilon<\kappa^{+3}$. Then in $V[G][H\upharpoonright \gamma][J]$ the function $F^{\gamma,\eta}_{\epsilon}$ is an embedding of $\mathcal{E}^{\gamma}_{\epsilon}$ into $\mathcal{E}^{\gamma}$.
\end{lemma}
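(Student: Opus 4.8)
The plan is to verify the three properties that make $F^{\gamma,\eta}_{\epsilon}$ an embedding of $\mathcal{E}^{\gamma}_{\epsilon}$ into $\mathcal{E}^{\gamma}$ as an induced subgraph: that it is a total function $\kappa^+\to T\times\kappa$, that it is injective, and that for $\zeta\neq\zeta'<\kappa^+$ one has $\zeta\mathcal{E}^{\gamma}_{\epsilon}\zeta'$ in $V[G][H\upharpoonright\gamma][J]$ exactly when $F^{\gamma,\eta}_{\epsilon}(\zeta)\mathcal{E}^{\gamma}F^{\gamma,\eta}_{\epsilon}(\zeta')$. Totality and injectivity are the quick part. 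Since $\dom h^{\gamma}\in\bigcap_i u_i$ and $J$ is geometric, $\{\alpha<\lambda:\vec{w}_{\alpha}\in\dom h^{\gamma}\}$ is co-bounded, so $\vec{w}_{\beta}\in\dom h^{\gamma}$ and hence $\eta=\kappa(\vec{w}_{\beta})\in a^{(c,h)}$ for some $(c,h,t,f)\in H(\gamma)$; given $\zeta<\kappa^+$, Lemma \ref{squareOff} makes the conditions $p$ with $(\eta,\epsilon)\in t^p$ and $\zeta\in\dom (f^p)^{\eta}_{\epsilon}$ dense below such a condition, so by genericity of $H(\gamma)$ we get $\zeta\in\dom F^{\gamma,\eta}_{\epsilon}$ and thus $\dom F^{\gamma,\eta}_{\epsilon}=\kappa^+$. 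Injectivity is immediate from clause (4) of the definition of $\mathbb{Q}_{\vec{u}^{\gamma}}$: $F^{\gamma,\eta}_{\epsilon}(\zeta)=(x_{\epsilon}\upharpoonright\zeta,\nu)$ for some $\nu$, and since $x_{\epsilon}$ is a branch of $T$ distinct $\zeta$ give first coordinates at distinct levels of $T$.

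The forward direction of the edge condition is trivial: if $\zeta\mathcal{E}^{\gamma}_{\epsilon}\zeta'$, then $\epsilon,\zeta,\zeta'$ themselves witness $F^{\gamma,\eta}_{\epsilon}(\zeta)\mathcal{E}^{\gamma}F^{\gamma,\eta}_{\epsilon}(\zeta')$ in the definition of $\mathcal{E}^{\gamma}$. For the converse, suppose $F^{\gamma,\eta}_{\epsilon}(\zeta)\mathcal{E}^{\gamma}F^{\gamma,\eta}_{\epsilon}(\zeta')$ with $\zeta\neq\zeta'$, and unpack that definition to get $\epsilon'<\kappa^{+3}$ and $\zeta_0,\zeta_0'<\kappa^+$ with $F^{\gamma,\eta}_{\epsilon'}(\zeta_0)=F^{\gamma,\eta}_{\epsilon}(\zeta)$, $F^{\gamma,\eta}_{\epsilon'}(\zeta_0')=F^{\gamma,\eta}_{\epsilon}(\zeta')$ and $\zeta_0\mathcal{E}^{\gamma}_{\epsilon'}\zeta_0'$. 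Comparing first coordinates, $x_{\epsilon'}\upharpoonright\zeta_0=x_{\epsilon}\upharpoonright\zeta$ forces $\zeta_0=\zeta$ (equal levels of $T$), and likewise $\zeta_0'=\zeta'$; so $\zeta\mathcal{E}^{\gamma}_{\epsilon'}\zeta'$, while $F^{\gamma,\eta}_{\epsilon'}(\zeta)=F^{\gamma,\eta}_{\epsilon}(\zeta)$ and $F^{\gamma,\eta}_{\epsilon'}(\zeta')=F^{\gamma,\eta}_{\epsilon}(\zeta')$ are distinct. It therefore suffices to show that $\mathcal{E}^{\gamma}_{\epsilon}$ and $\mathcal{E}^{\gamma}_{\epsilon'}$ agree on the pair $\{\zeta,\zeta'\}$ in $V[G][H\upharpoonright\gamma][J]$; together with $\zeta\mathcal{E}^{\gamma}_{\epsilon'}\zeta'$ this gives $\zeta\mathcal{E}^{\gamma}_{\epsilon}\zeta'$.

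To obtain this agreement I would invoke clause (5) of the definition of $\mathbb{Q}_{\vec{u}^{\gamma}}$. By genericity of $H(\gamma)$ and Lemma \ref{squareOff}, fix $(c,h,t,f)\in H(\gamma)$ with $\eta\in a^{(c,h)}\cap\sup a^{(c,h)}$, $(\eta,\epsilon),(\eta,\epsilon')\in t$, $\zeta,\zeta'\in\dom f^{\eta}_{\epsilon}\cap\dom f^{\eta}_{\epsilon'}$ and — crucially — with $c$ an initial segment of $h^{\gamma}$ of length just past $\eta$, so that $\rho^{(c,h)}\leq\kappa(\vec{w}_{\beta+1})$. Since the $f^{\eta}_{\delta}$ are restrictions of $F^{\gamma,\eta}_{\delta}$, we have $f^{\eta}_{\epsilon}(\zeta)=f^{\eta}_{\epsilon'}(\zeta)\neq f^{\eta}_{\epsilon}(\zeta')=f^{\eta}_{\epsilon'}(\zeta')$ (the first coordinates $x_{\epsilon}\upharpoonright\zeta,x_{\epsilon}\upharpoonright\zeta'$ differ), so clause (5) applies with $\alpha=\epsilon$, $\beta=\epsilon'$ and gives $s\frown((\vec{u}^{\gamma},h))\forces_{\mathbb{R}_{\vec{u}^{\gamma}}}\zeta\dot{\mathcal{E}}^{\gamma}_{\epsilon}\zeta'\leftrightarrow\zeta\dot{\mathcal{E}}^{\gamma}_{\epsilon'}\zeta'$ for every lower part $s$ harmonious with $c$ past $\eta$. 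Then I would take $s$ to be a finite lower part with $\kappa(\max s)=\kappa(\vec{w}_{\beta+1})$ approximating $\langle\vec{w}_{\alpha},g_{\alpha}\mid\alpha\leq\beta+1\rangle$, reading the second coordinate at $\vec{w}_{\beta+1}$ and the gap functions off $h^{\gamma}$ (legitimate since $h^{\gamma}(\vec{w}_{\alpha})\in g_{\alpha}$ for $\alpha>\beta$ and $h^{\gamma}\upharpoonright\kappa(\vec{v})\in\mathcal{F}_{\vec{v}}$ for $\vec{v}\in\dom h^{\gamma}$). Every triple of $s$ lies at $\kappa$-value $<\eta$ except $\vec{w}_{\beta}$ (value $\eta$, length $0$) and $\vec{w}_{\beta+1}$ (whose data was read off $c=h^{\gamma}\upharpoonright\rho^{(c,h)}$), so $s$ is harmonious with $c$ past $\eta$; and $s\frown((\vec{u}^{\gamma},h))\in F_J$, because $\vec{w}_{\beta+1}$ is the only generic point with $\kappa$-value in $(\eta,\rho^{(c,h)})$ and it already appears in $s$, while every $\alpha\geq\beta+2$ has $\kappa(\vec{w}_{\alpha})\geq\rho^{(c,h)}$, so $\vec{w}_{\alpha}\in\dom h$ (as $h^{\gamma}\downharpoonright\rho^{(c,h)}\leq h$, $(c,h)$ being in the $\mathbb{M}_{\vec{u}^{\gamma}}$-generic filter) and $h(\vec{w}_{\alpha})\in g_{\alpha}$. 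Hence the displayed biconditional holds in $V[G][H\upharpoonright\gamma][J]$, completing the proof.

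The step I expect to be the main obstacle, and the only place the particular form of $\eta$ is used, is this last one: exhibiting a single condition in $F_J$ that simultaneously forces the right instance of agreement between $\mathcal{E}^{\gamma}_{\epsilon}$ and $\mathcal{E}^{\gamma}_{\epsilon'}$ and satisfies the harmoniousness hypothesis of clause (5). The difficulty is that $s\frown((\vec{u}^{\gamma},h))$ is compatible with $F_J$ only if no generic point $\vec{w}_{\alpha}$ with $\eta<\kappa(\vec{w}_{\alpha})<\rho^{(c,h)}$ is stranded outside $\dom h$, which is what forces one to work with a condition of $H(\gamma)$ whose $c$-part reaches only just past $\eta$, up to $\kappa(\vec{w}_{\beta+1})$. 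If it turns out to be awkward to check that $H(\gamma)$ offers such a condition — since the $\mathbb{M}_{\vec{u}^{\gamma}}$-generic filter need not provide $c$-parts with $\rho^{(c,h)}$ in a prescribed short interval — the alternative is to argue, by a density argument in $\mathbb{R}_{\vec{u}^{\gamma}}$ patterned on the proof of Lemma \ref{squareOff}, that the set of conditions forcing the biconditional is dense below a suitable member of $F_J$.
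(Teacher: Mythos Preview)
Your reduction of the problem to the final step (finding a harmonious lower part that lands in $F_J$ so that clause (5) can be invoked) matches the paper's, including the observation that $\zeta_0=\zeta$ and $\zeta_0'=\zeta'$ are forced by the first coordinates. The divergence is entirely in how that final step is executed.

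You try to pin down $(c,h,t,f)\in H(\gamma)$ with a \emph{short} stem $c$, namely $\rho^{(c,h)}$ in the narrow window between $\kappa(\vec w_{\beta+1})$ and $\kappa(\vec w_{\beta+2})$, so that a hand-built lower part $s$ ending at $\vec w_{\beta+1}$ will automatically be harmonious with $c$ and have $s\frown((\vec u^\gamma,h))\in F_J$. You are right to flag this as the obstacle: weakening a condition in $H(\gamma)$ to one with prescribed short $c$ while keeping $(\eta,\epsilon),(\eta,\epsilon')\in t$ and $\zeta,\zeta'$ in the $f$-domains requires producing an upper part $h$ whose domain absorbs $\dom c^*\setminus\dom c$, and checking that the resulting quadruple is still a condition (clause (5) included) and lies above something in $H(\gamma)$. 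This is not obviously impossible, but it is delicate, and your sketch (``approximating $\langle\vec w_\alpha,g_\alpha\mid\alpha\le\beta+1\rangle$'' by a finite lower part, and the slightly inconsistent use of $\rho^{(c,h)}\le\kappa(\vec w_{\beta+1})$ versus $\vec w_{\beta+1}\in\dom c$) does not carry it through. A minor additional gap: you assert $\vec w_\beta\in\dom h^\gamma$ from co-boundedness, but the paper's choice of $\beta$ only guarantees $\vec w_\alpha\in\dom h^\gamma$ for $\alpha>\beta$.

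The paper avoids all of this by going the other way: it first picks an arbitrary $s\frown((\vec u^\gamma,h))\in J$ forcing $\zeta\dot{\mathcal E}^\gamma_{\epsilon'}\zeta'$ with $\vec w_\beta$ appearing in $s$, and then takes $(c,h',t,f)\in H(\gamma)$ with $c$ \emph{long}, extending past $\max s$ (via Lemma~\ref{squareOff}). It then \emph{modifies $s$ to an $s'$} that is harmonious with this long $c$: triples at or below $\eta$ are left alone; triples $(\vec w_\alpha,d,p)$ with $\alpha>\beta$ are replaced by $(\vec w_\alpha,\,d\wedge(c\upharpoonright\kappa(\vec w_\alpha)),\,p\wedge c(\vec w_\alpha))$; and a new top triple is appended at $\vec w_{\beta'}$, where $\beta'$ is maximal with $\vec w_{\beta'}\in\dom c$. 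Because $c$ is an initial segment of $h^\gamma$ and $h^\gamma(\vec w_\alpha)\in g_\alpha$ for $\alpha>\beta$, each of these meets is nontrivial and the new $s'$ still lies in $F_J$; harmoniousness with $c$ is then immediate by construction. So rather than constraining $c$ to fit a fixed $s$, the paper bends $s$ to fit an arbitrary $c$, which is exactly the manoeuvre that dissolves the obstacle you identified.
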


\begin{proof}
	It is clear from the definition that every edge in $\mathcal{E}^{\gamma}_{\epsilon}$ is mapped to one in $\mathcal{E}^{\gamma}$, so we need to show the converse. Consider $\epsilon$, $\zeta$ and $\zeta'$ such that $F^{\gamma,\eta}_{\epsilon}(\zeta) \mathcal{E}^{\gamma} F^{\gamma,\eta}_{\epsilon}(\zeta)$. Observe first that the values of $\zeta$ and $\zeta$ are deducible from their targets under $F^{\gamma,\eta}_{\epsilon}$, so there must be some $\epsilon'$ with $\zeta \mathcal{E}^{\gamma}_{\epsilon'} \zeta'$ such that
	$$F^{\gamma,\eta}_{\epsilon}(\zeta)=F^{\gamma,\eta}_{\epsilon'}(\zeta)\neq F^{\gamma,\eta}_{\epsilon}(\zeta')=F^{\gamma,\eta}_{\epsilon'}(\zeta').$$
	Take a condition $s\frown((\vec{u}^{\gamma},h)) \in J$ such that $s\frown((\vec{u}^{\gamma},h))\forces\zeta \dot{\mathcal{E}}^{\gamma}_{\epsilon'}\zeta'$, $s$ extends past $\eta$, and $\vec{w}_{\beta}$ occurs in $s$. Use Lemma \ref{squareOff} to take a condition $(c,h',t,f)\in H(\gamma)$ such that $c$ extends past $\max s$, $a^{(c,h')}$ has a maximal element, $h'\leq h$, $(\eta,\epsilon),(\eta,\epsilon') \in t$ and $\zeta,\zeta'\in \dom f^{\eta}_{\epsilon}\cap \dom f^{\eta}_{\epsilon'}$. Our aim is to find a lower part $s'$ such that:
	\begin{enumerate} [(i)]
		\item $s'$ is harmonious with $c$ past $\eta$.
		\item $s'\frown((\vec{u}^{\gamma},h'))\leq s\frown((\vec{u}^{\gamma},h))$.
		\item $s'\frown((\vec{u}^{\gamma},h'))\in J$.
	\end{enumerate}
	Then we will use (i) to invoke the fifth clause of the definition of $\mathbb{Q}_{\vec{u}^{\gamma}}$ for $(c,h',t,f)$ to see that
	$$s'\frown((\vec{u}^{\gamma},h'))\forces \zeta \dot{\mathcal{E}}^{\gamma}_{\epsilon} \zeta' \leftrightarrow \zeta \dot{\mathcal{E}}^{\gamma}_{\epsilon'} \zeta'$$
	which by (ii) will give $s'\frown((\vec{u}^{\gamma},h'))\forces\zeta \dot{\mathcal{E}}^{\gamma}_{\epsilon}\zeta'$ and then by (iii) we will be done.

	We construct $s'$ from $s$ as follows:
	\begin{itemize}
		\item Leave triples $(\vec{w}_{\alpha},d,p)$ with $\kappa(\vec{w}_{\alpha})\leq\eta$ (i.e. $\alpha\leq\beta$) unchanged.
		\item For $(\vec{w}_{\alpha},d,p)\in s$ with $\alpha>\beta$ replace with $(\vec{w}_{\alpha},d\wedge(c\upharpoonright\kappa(\vec{w}_{\alpha})),p\wedge c(\vec{w}_{\alpha}))$.
		\item The set of $\kappa(\vec{w}_{\alpha})$ is a club, and $\ssup a^{(c,h')}$ is a successor. This means we can take $\beta'$ maximal such that $\vec{w}_{\beta'}\in \dom c$. Then add $(\vec{w}_{\beta'},(h\upharpoonright\kappa(\vec{w}_{\beta'}))\wedge(c\upharpoonright\kappa(\vec{w}_{\beta'})),h(\vec{w}_{\beta'})\wedge c(\vec{w}_{\beta'}))$ to the end of $s$.
	\end{itemize}
	For $(\vec{w}_{\alpha},d,p)\in s$ with $\alpha>\beta$ note that $c$ is an initial segment of an upper part $h^{\gamma}$ and $\vec{w}_{\alpha}\in\dom h^{\gamma}$ so we are guaranteed that $c\upharpoonright\kappa(\vec{w}_{\alpha})\in \mathcal{F}_{\vec{w}_{\alpha}}$ for such $\alpha$. Also $p\in g_{\alpha}$, and $c(\vec{w}_{\alpha})=h^{\gamma}(\vec{w}_{\alpha})\in g_{\alpha}$ by choice of $\beta$, so $p$ and $c(\vec{w}_{\alpha})$ are compatible. The same holds for $\beta'$, ensuring $s'$ is a valid lower part. Now we check that it has the required properties.
	\begin{enumerate} [(i)]
		\item This is immediate from the definition (and the reason for the appearance of $c$ in it).
		\item The new triple of $s'$ must be addable to $s\frown((\vec{u}^{\gamma},h))$ on account of its being in $J$, and we have taken care to respect $h$ here.
		\item We will use the requirements from Definition \ref{defnGenericFilter} for a condition to belong to the generic filter $J$ associated with $\langle \vec{w}_{\alpha}, g_{\alpha}\mid\alpha<\lambda\rangle$. The first clause is clear so we consider the second. For $\alpha<\beta$ we have $\vec{w}_{\alpha}$ below a triple of $s$ that is not modified, so all is well. For $\beta<\alpha<\beta'$ we have $\vec{w}_{\alpha}\in h^{\gamma}$ with $h^{\gamma}(\vec{w}_{\alpha})\in g_{\alpha}$. Now $c$ is an initial segment of $h^{\gamma}$ that extends to $\vec{w}_{\beta'}$ so $\vec{w}_{\alpha}\in \dom c$ and $c(\vec{w}_{\alpha})\in g_{\alpha}$. This means that the modifications made to the members of $s$ are unproblematic. (It is for this step that we had to add the extra triple to $s'$.) Finally for $\alpha>\beta'$ we have that $\kappa(\vec{w}_{\alpha})>\kappa(\max \dom c)$ so the fact that $(c,h',t,f)\in H(\gamma)$ and $\vec{w}_{\alpha}\in\dom h^{\gamma}$ tells us that $\vec{w}_{\alpha}\in\dom h'$; likewise $g_{\alpha}\ni h^{\gamma}(\vec{w}_{\alpha})\leq h'(\vec{w}_{\alpha})$.
	\end{enumerate}
\end{proof}

We can now conclude the proof. Take a sequence $\langle\delta_i\mid i<\kappa^{++}\rangle$ of points from $S$ such that $\delta:=\sup \delta_i$ is in $S$. Our final model will be $V[G][H\upharpoonright\delta][J]$ and the family of universal graphs will be $\{\mathcal{E}^{\delta_i}\mid i<\kappa^{++}\}$. Given some graph $\mathcal{E}$ in the model, take a $\mathbb{R}_{\vec{u}^{\delta}}$-name $\dot{\mathcal{E}}$ in $V[G][H\upharpoonright\delta]$ for it. By the $\kappa^+$-cc of $\mathbb{R}_{\vec{u}^{\delta}}$ this name can be coded as a subset of $\kappa^+$ and then by the $\kappa^+$-cc of the forcing iteration we can find some $i<\kappa^{++}$ such that $\dot{\mathcal{E}}$ is in $V[G][H\upharpoonright\delta_i]$. Since $\vec{u}^{\delta_i}$ is a restriction of $\vec{u}^{\delta}$ we see that $\mathbb{R}_{\vec{u}^{\delta_i}}$ will also interpret the name as $\mathcal{E}$, and the lemma above shows that it can be embedded into $\mathcal{E}^{\delta_i}$.

By Lemma \ref{propertiesOfP} $\mathbb{L}\times\mathbb{P}_{\gamma}$ preserves all cardinals, and then by Proposition \ref{preserveCardinals} $\mathbb{R}_{\vec{u}}$ changes $\kappa$ to $\aleph_{\lambda}$ and preserves all larger cardinals. We have proved the following theorem,

\begin{theorem}
	Let $\kappa$ be supercompact and $\lambda<\kappa$ regular uncountable. Then there is a forcing extension in which $\kappa=\aleph_{\lambda}$, $2^{\aleph_{\lambda}}=2^{\aleph_{\lambda+1}}=\aleph_{\lambda+3}$ and there is a jointly universal family of graphs on $\aleph_{\lambda+1}$ of size $\aleph_{\lambda+2}$.
\end{theorem}

\section*{Acknowledgement}

Thanks go to my advisor James Cummings for all his help with this paper.

\bibliographystyle{plain}

\end{document}